\pgfplotsset{compat=1.14}
\pgfplotsset{my style/.append style={axis x line=middle, axis y line=middle, xlabel={$X$}, ylabel={$Y$}, axis equal }}
\newcommand{\ol}{\overline}
\newcommand{\one}[1]{\mbox {\bf 1}_{\{#1\}}}
\newcommand{\witi}{\widetilde}
\newcommand{\zz}{{\mathbb Z}}
\newcommand{\nn}{{\mathbb N}}
\newcommand{\rr}{{\mathbb R}}
\newcommand{\cale}{{\mathcal E}}
\newcommand{\calf}{{\mathcal F}}
\newcommand{\mz}{{\mathfrak Z}}
\newcommand{\my}{{\mathfrak Y}}
\newcommand{\veps}{\varepsilon}
\newcommand{\ycz}{Y^{(c,x,z)}}
\newcommand{\ycx}{Y^{(c,x,x)}}
\newcommand{\ucx}{U^{(c,x)}}
\newcommand{\ycm}{Y^{(c,x,z-x)}}
\newcommand{\uc}{U^{(c,n)}}
\newcommand{\Bc}{B^{(c,x)}}
\newcommand{\xc}{X^{(c,n)}}
\newcommand{\zc}{Z^{(c,n)}}
\newcommand{\qc}{Q^{(c,n)}}
\newcommand{\Sc}{S^{(c)}}
\newcommand{\xn}{X^{(n)}}
\newcommand{\beq}{\begin{eqnarray*}}
\newcommand{\feq}{\end{eqnarray*}}
\newcommand{\beqn}{\begin{eqnarray}}
\newcommand{\feqn}{\end{eqnarray}}
\newtheorem{theorem}{Theorem}
\makeatletter \@addtoreset{theorem}{section}\makeatother
\newtheorem{lemma}[theorem]{Lemma}
\newtheorem{assume}[theorem]{Assumption}
\newtheorem{cond}[theorem]{Condition}
\newtheorem*{theorema*}{Theorem~A}
\newtheorem*{theoremb*}{Theorem~B}
\newtheorem*{cld*}{Condition $\mbox{LD}_d$}
\newtheorem*{theorem*}{Theorem}
\newtheorem{proposition}[theorem]{Proposition}
\newtheorem{corollary}[theorem]{Corollary}
\newtheorem{remark}[theorem]{Remark}
\title{Avalanches in an excitable network}
\author{
Reza~Rastegar\thanks{Occidental Petroleum Corporation, Houston, TX 77046, USA; e-mail:  reza\_rastegar2@oxy.com}
\and
Alexander~Roitershtein \thanks{Dept. of Statistics, Texas A\&M University, College Station, TX 77843; e-mail: alexander@stat.tamu.edu}
}
\begin{document}
\maketitle

\begin{abstract}
We study propagation of avalanches in a certain excitable network. The model is a particular case of the one introduced in \cite{ca},
and is mathematically equivalent to an endemic variation of the Reed-Frost epidemic model introduced in \cite{longini}.
Two types of heuristic approximation are frequently used for models of this type in applications,
a branching process for avalanches of a small size at the beginning of the process and a deterministic
dynamical system once the avalanche spreads to a significant fraction of a large network. In this paper we prove several results
concerning the exact relation between the avalanche model and these limits, including rates of convergence and rigorous bounds for
common characteristics of the model.

\end{abstract}
{\em MSC2010: } Primary~60J10, 60J85, secondary~92D25, 90B15, 60K40.\\
\noindent{\em Keywords}: avalanches in networks, excitable networks, cascading failure, criticality, branching processes, dynamic graphs.
\section{Introduction}
We study a discrete-time Markov model of the propagation of avalanches in a large network.
“Avalanches” here is a general term referring to a cascading spread of a node's feature in a network of linked objects.
The exact nature of the feature is immaterial for our purposes, it may have either positive or
negative effect on particular aspects of the network performance. We generally refer to network's nodes possessing the feature as excited.
Initially, a set of nodes becomes excited as a result of an external simulation. Once an avalanche is triggered, 
excited nodes can transmit this feature to currently non-excited ones, creating cascades (generations) of excited nodes evolving in discrete time.
\par
Examples of avalanches in networks that have been studied in applications include epidemics, outages in a power grid,
information spread in a human network, cascades of firing neurons in cortex, viruses in a computer network, forest fire etc \cite{gdurrett, ca}.
The model that we consider in this paper belongs to the class of chain-binomial Markov models \cite{infect, chainb}.
Two types of approximation are frequently used for models of this type, the first one is a branching process approximation for cascades
of a small size at the beginning of the process \cite{Andersson, bina, jmb, dgbook, gdurrett, n, a}, and the second one is an approximation by
a deterministic dynamical system once the avalanche spreads to a significant fraction of a large network \cite{ozgur, bdet, knerman, longini}.
\par
The approximations link the asymptotic behavior of immensely complex stochastic processes on a large network
to relatively well understood mathematical objects, allowing to gain a qualitative insight into statistical proprieties of the network model.
Each of the two approximation processes is in a rigorous sense a limit of the original model in a certain regime. The interpretation
of the link between the original model and the approximations is not trivial because some essential features are not preserved
when the limit is taken. For instance, the branching approximation for the model studied in this paper 
is in essence a linearization eliminating the dependence between the nodes (cf. \cite{ca}),
it is monotone in all basic parameters while the original model is not. Likewise, while the Markov chain describing the evolution of the 
avalanche magnitude (its transition kernel is specified in \eqref{ker} below) converges to zero with probability one for any set of parameters, 
there is a regime in which the approximating dynamical system converges to its non-zero global stable point (see Section~\ref{dapprox} below). 
Usually, the relation between network models and their approximations is studied using either heuristic arguments or numeric computations. 
\par 
In this paper we focus on a rigorous analytical comparison between the avalanche model
and the above mentioned limits, including rates of convergence and rigorous bounds for common characteristics of the model.
Loosely speaking, some of our results can be viewed as a “second order”
correction to the branching approximation. In order to make the comparison between the avalanche model and its branching approximation
we use coupling constructions based on canonical schemes of stochastic coupling of binomial and Poisson random variables
(see Sections~\ref{bpapr} and \ref{dtime} below). Most of our results are new, some complement the results obtained in \cite{ca} 
through heuristic perturbation arguments.   
\par
Typically, cascading models exhibit a phase transition between a subcritical regime characterized by a
short duration and small size  of the avalanche and a supercritical one characterized by long lasting avalanches that
eventually affect a non-zero fraction of the network before disappear. It is  often argued that regimes near the criticality exhibit
the most rich and advantageous for the network performance behavior. See, for instance, recent surveys \cite{review1, review} and references therein.
The phenomenon of criticality is of a special interest also because of a universal nature of the phenomenon as well
as mathematical challenges in its study. For an interesting discussion of the relation between branching processes and self-organized criticality see 
\cite{kello, b10}. The model that we investigate in this paper is lacking a trivial monotonicity in parameters,
but nevertheless exhibits the phase transition with a distinct critical set of parameters.
\par
We proceed with a formal definition of the avalanche model considered in this paper. 
Fix an integer $n\geq 3$ and denote $V_n=\{1,\ldots,n\}.$
The set $V_n$ models the nodes of a network with $n$ nodes. Let $\Omega_n:=
2^{V_n}$ be the space of subsets of $V_n,$ and consider the following \emph{avalanche process} $(A_k)_{k\in\zz_+}$ on $\Omega_n.$
Here and henceforth $\zz_+$ denotes the set of non-negative integers. Assume that the initial state $A_0\in\Omega_n\backslash\{\emptyset,V_n\}$ is neither
an empty set nor the whole network. Let $p\in (0,1)$ be given, and $q=1-p.$
Formally, the sequence $A_k$ is a discrete-time Markov chain in the state space $\Omega_n$ with transition kernel given by
\beqn
\label{amset}
P(A_{k+1}=B\,|\,A_k=A)=
\left\{
\begin{array}{ll}
\bigl(1-q^{|A|}\bigr)^{|B|}\cdot \bigl(q^{|A|}\bigr)^{n-|A|-|B|}&\mbox{\rm if}~B\subset A^c\\
0&\mbox{\rm otherwise},
\end{array}
\right.
\feqn
where $A$ and $B$ are arbitrary elements of $\Omega_n,$ $A^c$ denotes the complement of the set $A$ in $V_n,$ and $|A|$ denotes the cardinality of $A.$
\par
We refer to $A_k$ and $A_k^c$ as, respectively, the \emph{excited} and \emph{resting}
states at time $k.$ The interpretation is that an excited state turns into rested in the next instance of time,
but before that can excite any of the resting states, each one with probability $p.$
Thus the (conditional, given the excited nodes $A_k$) probability that a node
$x\in A_k^c$ will \emph{not} become excited in the next iteration is equal to $q^{|A_k|}.$
We further assume that the excitement mechanisms of different resting nodes
are independent each of other at any given instant of time, and hence the product on the right-hand side of \eqref{amset}.
\par 
The model is a particular case of the avalanche model introduced in \cite{ca},
where the ``excitant probability" is $p,$ uniformly across all links in the network. Formally, the model described by \eqref{amset}
coincides with the model of the spread of an endemic infection introduced in \cite{longini}. In contrast to \cite{longini},
we concentrate in this paper on the case when $p$ is $O(1/n)$ rather than $O(1)$ for large $n.$ More realistic versions 
of excitable networks are considered, for instance, in \cite{b7} and \cite{b6, iva}. We remark that our proof methods can be partially 
extended and applied to more complex networks, cf. \cite{reza15}.  
\par
Let $\{G_k(n,p):k\in\zz_+\}$ be an i.\,i.\,d. sequence of Erd\H{o}s-R\'{e}nyi graphs with percolation parameter $p,$
sharing $V_n$ as the common vertex set. An equivalent, dynamic graph viewpoint on the avalanche model is that every node excited at time $k$
excites with certainty all its resting neighbors in the random graph $G_k(n,p).$ Though this observation is not used anywhere in the paper,
it immediately provides some heuristic insights into the behavior of the avalanche model when $n$ is large. For instance, if $p=c/n$ then
with overwhelming probability, when $c\in(0,1)$ all connected components of the Erd\H{o}s-R\'{e}nyi graph are of order $O(\log n),$ while
if $c>1$ there is a connected component of the size $O(n)$ \cite{rgraphs}. Heuristically, an implication for the avalanche model that one may expect, is that
the avalanche starting on a single node has a little chance to spread over a non-zero fraction of the network in the former case
whereas the probability of such an avalanche to eventually reach the size of order $O(n)$ is non-zero in the latter regime. This heuristic observation
is formally confirmed in Section~\ref{fixp}. More generally, the dynamic graph viewpoint might 1) serve as an indication of the existence of a phase transition
in terms of the asymptotic behavior of the model on the scale $p=c/n$ at $c=1,$ and 2) be perceived as a fundamental reason behind the phase transition.
\par
In this paper we focus on the Markov chain $(X_t)_{t\in\zz_+},$ where $X_t=|A_t|.$ Transition kernel of this Markov chain is as follows:
\beqn
\label{ker}
P(X_{k+1}=j|X_k=i)=\binom{n-i}{j} (1-q^i)^j(q^i)^{n-i-j},\qquad i=0,1,\ldots,n,\,\quad j\leq n-i,
\feqn
with the convention that $\binom{0}{0}=1$ and $\binom{0}{j}=0$ for all $i\in\nn.$
\par
Let $T$ and $S$ denote, respectively, the duration and the total size of the avalanche:
\beqn
\label{dusize}
T=\inf\{k\in\zz_+: X_k=0\}\qquad \mbox{\rm and}\qquad S=\sum_{k=0}^T X_k,
\feqn
with the usual convention that $\inf \emptyset =+\infty.$ Remark that zero is the unique absorbing state and $P(T<\infty)=1$ regardless
of the initial state $X_0.$  Both the quantities are arguably among the most important general characteristics of avalanches in a complex network. 
In Section~\ref{dtime} we study the distribution function of avalanche duration $T.$ The asymptotic behavior of $S$ for large
values of $n$ is the content of Theorem~\ref{nsumse} in Section~\ref{bpapr}.
\par
The organization of the paper is as follows. In Section~\ref{funda} we obtain first estimates for the expected value of
several fundamental characteristics of the avalanches, such as the current size $X_k,$ current heterogeneity $X_k(n-X_k),$
and the total size $S.$ Some of these basic estimates are subsequently refined and improved. In Section~\ref{bpapr} we introduce the key
technical tool of our study, a branching process to which the avalanche chain converges weakly as $n$ tends to infinity, provided that
the initial value $X_0$ is kept fixed and the ``intensity factor" $pn$ converges to a positive limit. We use the branching approximation
to study the total size of the avalanche in the subcritical regime. In particular, Theorem~\ref{nsumse} gives the rate of convergence
to the limiting value as $n$ goes to infinity. In Section~\ref{fixp} we address the question whether an avalanche that started on a few
initially excited nodes can propagate to a non-zero fraction of the network (asymptotically, when $n$ is large). In particular,
Theorem~\ref{main1} provides lower and upper bounds with a qualitatively matching  asymptotic behavior for this probability for a given
network size $n.$ In Section~\ref{dtime} we study duration of the avalanche using branching approximation and its natural coupling
with the avalanche chain. In Section~\ref{dapprox} we are concerned with an approximation of the avalanche chain by a deterministic dynamical system.
While the branching approximation is adequate as long as $X_k\ll n,$ the deterministic approximation is suitable when $X_k/n=O(1).$
\section{Basic estimates for a subcritical network}
\label{funda}
The aim of this section is to give bounds on the expected number of excited nodes at a given time, total size of the avalanche,
and the probability that a given node is excited at a fixed time $k\in\nn.$ Throughout the section we consider a single network,
that is we assume that $n\in\nn$ and $p\in (0,1)$ in \eqref{ker} are fixed. Most of the results here are of an auxiliary nature, but some,
in particular Propositions~\ref{heter}, \ref{mixing}, and~\ref{expo}, appear to be of independent interest.
Propositions~\ref{heter} is concerned with the evolution of a measure of heterogeneity for the network,
Proposition~\ref{mixing} gives a uniform on $V_n$ upper bound on the probability that a given node $x\in V_n$ is excited at time $k,$ and
Proposition~\ref{expo} gives tight lower and upper bounds for the expected total size of the avalanche in the subcritical regime.
For future convenience, we formulate our results in terms of arbitrary bounds $c>0$ and $d>0$ that satisfy the following condition:
\begin{cond}
\label{assume5}
$0<d\leq pn \leq c.$
\end{cond}
The next series of propositions is formulated for an arbitrary $c>0,$ even though the results are primarily useful in the case when $c\in (0,1).$
As we will see in the next section, this case exactly corresponds to a subcritical regime of the avalanche model, which turns out to be
$p$ satisfying the condition $pn<1.$
\par
Let $(\calf_k)_{k\in\zz_+}$ be the natural filtration for the sequence $(X_k)_{k\in\zz_+},$ that is $\calf_k$ is the
$\sigma$-algebra generated by $X_0,\ldots,X_k.$ It follows from \eqref{ker} that with probability one,
\beqn
\label{avee}
E(X_{k+1}\,|\,\calf_k)\leq (n-X_k)\Bigl[1-\Bigl(1-\frac{c}{n}\Bigr)^{X_k}\Bigr]\leq cX_k\frac{n-X_k}{n}\leq cX_k.
\feqn
Notice that the formula remains formally true when $X_k=0.$ Thus, we have:
\begin{proposition}
\label{ave}
Let Condition~\ref{assume5} hold. Then the sequence $(X_kc^{-k})_{k\in \zz_+}$ is a supermartingale with respect to the filtration $(\calf_k)_{k\in\zz_+}.$
\end{proposition}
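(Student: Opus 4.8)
The plan is to verify directly the three defining properties of a supermartingale for the process $M_k := X_k c^{-k}$, $k\in\zz_+$, relative to the filtration $(\calf_k)_{k\in\zz_+}$. Adaptedness is immediate: by construction $\calf_k=\sigma(X_0,\ldots,X_k)$, so $X_k$, and hence $M_k$, is $\calf_k$-measurable. Integrability is equally immediate because the chain takes values in $\{0,1,\ldots,n\}$, so $0\le M_k\le n\,c^{-k}<\infty$; in particular each $M_k$ is bounded, hence in $L^1$.

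For the supermartingale inequality, note that $c^{-(k+1)}$ is a strictly positive deterministic constant (indeed $c\ge pn>0$ under Condition~\ref{assume5}), so it factors out of the conditional expectation,
\[
E(M_{k+1}\mid\calf_k)=c^{-(k+1)}\,E(X_{k+1}\mid\calf_k).
\]
Now invoke the chain of inequalities \eqref{avee}, whose concluding bound gives $E(X_{k+1}\mid\calf_k)\le cX_k$ almost surely; here only the inequality $pn\le c$ (equivalently $p\le c/n$) is used, the lower bound $d$ playing no role. Substituting, $E(M_{k+1}\mid\calf_k)\le c^{-(k+1)}\cdot cX_k=c^{-k}X_k=M_k$ almost surely, which is exactly the supermartingale property, and the proof is complete.

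For completeness one may recall how \eqref{avee} itself follows from \eqref{ker}: conditionally on $X_k=i$, the variable $X_{k+1}$ is binomial with $n-i$ trials and success probability $1-q^i$, so $E(X_{k+1}\mid X_k=i)=(n-i)(1-q^i)$; since $q=1-p\ge 1-c/n$ we have $q^i\ge(1-c/n)^i$, and Bernoulli's inequality $(1-c/n)^i\ge 1-ic/n$ yields $1-q^i\le ic/n$, whence $(n-i)(1-q^i)\le c\,i(n-i)/n\le ci$. There is essentially no obstacle in this argument: the only points worth checking are that the relevant direction in \eqref{avee} is the one needed (an upper bound on the conditional expectation) and that the deterministic discount factor is positive so that multiplying through preserves the inequality — both evidently hold. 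Thus the statement follows in a couple of lines once \eqref{avee} is in hand.
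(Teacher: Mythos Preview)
Your proof is correct and follows the same approach as the paper: the paper simply derives the inequality \eqref{avee} and states the proposition as an immediate consequence, leaving adaptedness and integrability implicit. You have merely made those routine verifications explicit and spelled out the derivation of \eqref{avee}, so there is nothing substantively different.
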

This straightforward estimate will be improved for the entire range of the network parameters ($np$ is less, greater, or equal to $1$)
in Section~\ref{dapprox} below.
\par
For $k\in\zz_+,$ let
\beqn
\label{het}
H_k=X_k(n-X_k).
\feqn
The following is immediate from \eqref{avee}.
\begin{corollary}
\label{avec}
Let Condition~\ref{assume5} hold. Then $E(X_k)\leq \frac{c^k}{n}E(H_0)\leq \frac{c^k}{n}E(X_0)$ for all $k\in\nn.$
\end{corollary}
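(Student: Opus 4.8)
The plan is to upgrade the drift estimate \eqref{avee} for $(X_k)_{k\in\zz_+}$ to a matching estimate for the heterogeneity process $(H_k)_{k\in\zz_+}$, and then to combine the two in a one-step bootstrap: a bound on $E(H_{k-1})$ feeds, through \eqref{avee}, into the desired bound on $E(X_k)$.

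First I would record that $(H_kc^{-k})_{k\in\zz_+}$ is an $(\calf_k)$-supermartingale, the heterogeneity analogue of Proposition~\ref{ave}. Rewrite the bound $E(X_{k+1}\,|\,\calf_k)\le cX_k\frac{n-X_k}{n}$ from \eqref{avee} as $E(X_{k+1}\,|\,\calf_k)\le \frac{c}{n}H_k$ a.s. Since $0\le X_{k+1}\le n$ one also has the deterministic bound $H_{k+1}=X_{k+1}(n-X_{k+1})\le nX_{k+1}$. Taking conditional expectations and combining, $E(H_{k+1}\,|\,\calf_k)\le nE(X_{k+1}\,|\,\calf_k)\le cH_k$ a.s., which is the supermartingale property after dividing by $c^{k+1}$. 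Taking expectations and iterating on $k$ then gives $E(H_k)\le c^kE(H_0)$ for every $k\in\zz_+$.

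To conclude, fix $k\in\nn$, take expectations in $E(X_k\,|\,\calf_{k-1})\le \frac{c}{n}H_{k-1}$, and insert the previous bound with $k-1$ in place of $k$:
\[
E(X_k)\ \le\ \frac{c}{n}\,E(H_{k-1})\ \le\ \frac{c}{n}\cdot c^{k-1}E(H_0)\ =\ \frac{c^k}{n}E(H_0),
\]
which is the first asserted inequality. The second is just the pointwise estimate $H_0=X_0(n-X_0)\le nX_0$, whence $\tfrac1n E(H_0)\le E(X_0)$.

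There is no genuine obstacle here; the computation is elementary throughout. The only point worth flagging is the observation that the heterogeneity $X_k(n-X_k)$ contracts at the same exponential rate $c$ as $X_k$ itself — the extra factor lost in passing from $X_{k+1}$ to $H_{k+1}$ is at most $n$, which exactly cancels the $1/n$ in the drift bound — and this is what lets the bootstrap close without degrading the exponent.
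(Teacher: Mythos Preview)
Your proof is correct, but it takes a slightly longer detour than the paper intends. The paper's one-line justification ``immediate from \eqref{avee}'' is most naturally read as: apply the \emph{middle} inequality of \eqref{avee} only at the very first step, $E(X_1\,|\,\calf_0)\le \frac{c}{n}H_0$, and then the \emph{final} inequality $E(X_{j+1}\,|\,\calf_j)\le cX_j$ at every subsequent step (equivalently, Proposition~\ref{ave}). This gives directly
\[
E(X_k)\ \le\ c^{k-1}E(X_1)\ \le\ c^{k-1}\cdot\frac{c}{n}E(H_0)\ =\ \frac{c^k}{n}E(H_0),
\]
with no need for any statement about $H_k$ for $k\ge 1$.

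Your route instead establishes the heterogeneity contraction $E(H_{k+1}\,|\,\calf_k)\le cH_k$ first and then bootstraps through $E(X_k)\le \frac{c}{n}E(H_{k-1})$. This is perfectly valid, and in fact your two-line argument for the supermartingale property of $H_kc^{-k}$ (via $H_{k+1}\le nX_{k+1}$ and the middle bound in \eqref{avee}) is shorter than the paper's own proof of that fact in Proposition~\ref{heter}, which goes through an explicit second-moment computation. So what the paper's route buys is brevity for the corollary at hand; what yours buys is a cleaner proof of the heterogeneity supermartingale as a byproduct.

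One small remark: the second inequality in the statement as printed, $\frac{c^k}{n}E(H_0)\le \frac{c^k}{n}E(X_0)$, is a typo --- it should read $\le c^kE(X_0)$, exactly as your final line $\frac{1}{n}E(H_0)\le E(X_0)$ delivers.
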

For $k\in\zz_+$ and $x\in V_n,$ let
\beqn
\label{wk}
\cale_k(x)=\left\{
\begin{array}{ll}
1&\mbox{\rm if}~x\in A_k\\
0&\mbox{\rm if}~x\not\in A_k
\end{array}
\right.
\feqn
be the indicator of the event that a given node $x\in V_n$ is excited at time $k\in\zz_+.$ Remark that $H_k=X_k(n-X_k)$ can be interpreted as
a (non-normalized) measure of the heterogeneity of the network because
\beq
H_k=X_k(n-X_k)=\Bigl(\sum_{x \in V_n} \cale_k(x)\Bigr)\cdot \Bigl(\sum_{x \in V_n}\bigl(1-\cale_k(x)\bigr)\Bigr)
=\sum_{x,y \in V_n} \cale_k(x)\bigl(1-\cale_k(y)\bigr),
\feq
and thus $\frac{2H_k}{n(n-1)}$ is the probability that two nodes randomly chosen at time $k$ are not in the same state. It turns
out (see Section~\ref{dapprox} in this paper) that in the subcritical case $np\in(0,1)$ (and in fact also in the critical case $np=1$), for large values of $n,$ loosely speaking,
the number of excited nodes decreases to zero almost monotonically.
This observation motivates the following result, which in particular implies that the expected heterogeneity decreases to zero monotonically
in the subcritical regime (see also Corollary~\ref{h} below).
\begin{proposition}
\label{heter}
Let Condition~\ref{assume5} hold. Then the sequence $(H_kc^{-k})_{k\in\zz_+}$ is a supermartingale with respect to the filtration $(\calf_k)_{k\in\zz_+}.$
\end{proposition}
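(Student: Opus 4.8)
The plan is to reduce the statement entirely to the one-step estimate \eqref{avee}, which has already been established. Note first that $H_k=X_k(n-X_k)$ is a bounded (by $n^2/4$) function of $X_k$, hence $\calf_k$-measurable and integrable; so the whole content of the proposition is the one-step inequality $E(H_{k+1}\,|\,\calf_k)\le cH_k$ a.s., together with the observation that $c>0$ by Condition~\ref{assume5}, which lets us divide by $c^{k+1}$.

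The key (and essentially the only) idea is a crude deterministic bound: since $0\le X_{k+1}\le n$, we have $H_{k+1}=X_{k+1}(n-X_{k+1})\le nX_{k+1}$. Taking conditional expectations and then using the middle inequality in \eqref{avee},
\[
E(H_{k+1}\,|\,\calf_k)\le nE(X_{k+1}\,|\,\calf_k)\le n\cdot cX_k\frac{n-X_k}{n}=cX_k(n-X_k)=cH_k .
\]
The boundary cases $X_k\in\{0,n\}$ are automatic: then $H_k=0$ and, by \eqref{ker}, $X_{k+1}=0$ a.s., so $H_{k+1}=0$ as well. Multiplying the displayed inequality by $c^{-(k+1)}$ gives $E\big(H_{k+1}c^{-(k+1)}\,\big|\,\calf_k\big)\le H_kc^{-k}$, which is the supermartingale property.

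As an alternative route, not relying on \eqref{avee}, one computes the conditional second moment directly: given $X_k=i$, the variable $X_{k+1}$ is $\mathrm{Bin}(n-i,\,1-q^i)$, so with $m=n-i$ and $r=1-q^i$ one finds $E(H_{k+1}\,|\,X_k=i)=mr(n-1)-mr^2(m-1)\le mr(n-1)$, and then the elementary bound $r=1-q^i\le i(1-q)=ip$ combined with $pn\le c$ gives $mr(n-1)\le m\cdot ip\cdot n=i(n-i)(pn)\le c\,i(n-i)$. I do not expect a genuine obstacle here: the only subtleties are the trivial edge cases $X_k\in\{0,n\}$ and the (pleasant) fact that discarding the factor $\frac{n-X_{k+1}}{n}\le 1$ already loses exactly enough to be absorbed by the slack in $pn\le c$, so no sharper estimate is required at this point; the refinement valid for all values of $pn$ is deferred to Section~\ref{dapprox}.
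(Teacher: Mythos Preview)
Your proof is correct, and the main route is genuinely more economical than the paper's. The paper computes $E(H_{k+1}\,|\,X_k)$ in closed form via the second-moment identity \eqref{sm}, arrives at $(n-X_k)(1-q^{X_k})\bigl[X_k(1-q^{X_k})+(n-1)q^{X_k}\bigr]$, and then uses the bound $X_k\le n-1$ on the bracketed factor to reach $(n-1)(n-X_k)(1-q^{X_k})\le cX_k(n-X_k)$. You bypass all of this by the pointwise inequality $H_{k+1}\le nX_{k+1}$ and then invoke the already-proved estimate $E(X_{k+1}\,|\,\calf_k)\le cX_k(n-X_k)/n$ from \eqref{avee}; the factor of $n$ cancels and you land directly on $cH_k$. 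The gain is that you never touch the conditional variance, at the cost of the (harmless) slack $n-X_{k+1}\le n$. Your alternative route is essentially the paper's computation in abbreviated form: the expression $mr(n-1)-mr^2(m-1)$ is exactly the paper's penultimate line in \eqref{hier}, and dropping the negative term is the same step as the paper's use of $X_k\le n-1$.
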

\begin{proof}
It follows from \eqref{ker} that
\beqn
\label{sm}
E(X_{k+1}^2|X_k)&=&(n-X_k)(1-q^{X_k})q^{X_k}+(n-X_k)^2(1-q^{X_k})^2.
\feqn
Notice that the formula remains true when $X_k=0.$ It follows that
\beqn
\nonumber
&&
E\bigl[X_{k+1}(n-X_{k+1})\bigr|X_k\bigr]=nE(X_{k+1}|X_k)-E(X_{k+1}^2|X_k)
\\
\nonumber
&&
\qquad
=n(n-X_k)(1-q^{X_k})- (n-X_k)(1-q^{X_k})q^{X_k}-(n-X_k)^2(1-q^{X_k})^2
\\
\nonumber
&&
\qquad
=
(n-X_k)(1-q^{X_k})\bigl[n-q^{X_k}-(n-X_k)(1-q^{X_k})\bigr]
\\
\nonumber
&&
\qquad
=
(n-X_k)(1-q^{X_k})\bigl[X_k(1-q^{X_k})+(n-1)q^{X_k}\bigr]
\\
\nonumber
&&
\qquad
\leq
(n-1)(n-X_k)(1-q^{X_k})
\leq
n(n-X_k)\Bigl(1-\Bigl(1-\frac{c}{n}\Bigr)^{X_k}\Bigr)
\\
\label{hier}
&&
\qquad
\leq
cX_k(n-X_k),
\feqn
where in the first inequality we used the fact that $X_k\leq n-1,$ and hence
\beq
X_k(1-q^{X_k})+(n-1)q^{X_k}\leq n-1.
\feq
The proof of the proposition is complete.
\end{proof}
Recall $\cale_k(x)$ from \eqref{wk}. Let
\beqn
\label{xik}
\xi_k(x)= P\bigl(\cale_k(x)=1\bigr), \qquad x\in V_n
\feqn
and
\beq
\eta_k=P(X_k>0)=P(T>k), \qquad k \in\zz_+,
\feq
where $T$ is the duration of the avalanche defined in \eqref{dusize}. We have the following:
\begin{proposition}
\label{mixing}
Let Condition~\ref{assume5} hold. Then $\xi_k(x)\leq  \frac{c^k}{n^2}E(H_0)$ for all $k\in\nn$ and $x\in V_n.$
\end{proposition}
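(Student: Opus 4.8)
The plan is to convert the excitation rule into a one-step recursion for $\xi_k(x)$ and then feed into it the first-moment bounds already established. Reading off the single-node marginal of the transition kernel \eqref{amset}: conditionally on the excited set $A_k$, the node $x$ belongs to $A_{k+1}$ precisely when it is currently resting and then becomes excited, an event of conditional probability $(1-q^{|A_k|})\one{x\in A_k^c}$. Taking expectations and recalling $X_k=|A_k|$ together with $\cale_k(x)$ from \eqref{wk}, this gives the identity
\beqn
\label{xirec}
\xi_{k+1}(x)=E\bigl[(1-\cale_k(x))\,(1-q^{X_k})\bigr],\qquad k\in\zz_+,\ x\in V_n,
\feqn
which, exactly as in \eqref{avee}, remains valid when $X_k=0$, both sides then being zero.

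Next I would bound the two factors on the right of \eqref{xirec}. Since $pn\le c$ by Condition~\ref{assume5}, the Bernoulli inequality gives $1-q^{X_k}=1-(1-p)^{X_k}\le 1-\bigl(1-\tfrac{c}{n}\bigr)^{X_k}\le\tfrac{c}{n}X_k$, which is exactly the estimate already used in \eqref{avee}. Bounding $1-\cale_k(x)\le 1$ then yields $\xi_{k+1}(x)\le\tfrac{c}{n}E(X_k)$, and inserting $E(X_k)\le\tfrac{c^k}{n}E(H_0)$ from Corollary~\ref{avec} gives $\xi_{k+1}(x)\le\tfrac{c^{k+1}}{n^2}E(H_0)$, which after shifting the index is the assertion in the range $k\ge2$. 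Alternatively, keeping the factor $1-\cale_k(x)$ and observing that on $\{\cale_k(x)=0\}$ at least node $x$ rests, so $X_k\le n-1$ and hence $(1-\cale_k(x))X_k\le X_k(n-X_k)=H_k$ pointwise, one obtains $\xi_{k+1}(x)\le\tfrac{c}{n}E(H_k)$ and then feeds in the supermartingale bound $E(H_k)\le c^kE(H_0)$ from Proposition~\ref{heter}.

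The one point that needs separate care is the smallest value of $k$, where Corollary~\ref{avec} is not yet available; there one estimates $\xi_1(x)=E[(1-\cale_0(x))(1-q^{X_0})]$ directly from \eqref{xirec}, retaining the factor $1-\cale_0(x)$ and using $X_0\le n-1$ (valid since $A_0\ne V_n$), while the permutation invariance of the kernel \eqref{amset}, which makes the dynamics treat all nodes symmetrically, is what is responsible for the uniformity of the bound in $x$. Beyond this base case I do not expect any real obstacle: the argument rests entirely on the exact identity \eqref{xirec} and on the same Bernoulli-type inequality that already powers the estimates for $E(X_k)$ and $E(H_k)$, so it is in effect ``one more iteration'' of the heterogeneity computation.
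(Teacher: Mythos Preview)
Your main line—deriving the one-step identity $\xi_{k+1}(x)=E\bigl[(1-\cale_k(x))(1-q^{X_k})\bigr]$, bounding $1-q^{X_k}\le cX_k/n$ to get $\xi_{k+1}(x)\le \frac{c}{n}E(X_k)$, and then inserting $E(X_k)\le \frac{c^k}{n}E(H_0)$ from Corollary~\ref{avec}—is correct for $k\ge 2$ and is in fact more direct than the paper's argument. The paper first treats the exchangeable case (where $\xi_k(x)=\frac{1}{n}E(X_k)$ and the bound is immediate), then uses that to extract the conditional bound $P_{m,\ol x}(\cale_k(x)=1)\le mc^k/n$, and finally bootstraps via a one-step Markov decomposition conditioning on $A_1$ and on whether $x\in A_1$. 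Your identity \eqref{xirec} collapses all of that case analysis into a single line, because the expectation $E(X_k)$ already carries the $H_0$ factor via Corollary~\ref{avec}; exchangeability never needs to enter. So for $k\ge 2$ your route is shorter and loses nothing.

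Two points, however. First, your ``alternative'' estimate $(1-\cale_k(x))X_k\le X_k(n-X_k)=H_k$ is correct pointwise but gives only $\xi_{k+1}(x)\le \frac{c}{n}E(H_k)\le \frac{c^{k+1}}{n}E(H_0)$, which is a full factor of $n$ short of the claim; the inequality $X_k\le H_k$ wastes a factor of $n-X_k$. This alternative does not prove the proposition and should be dropped. Second, the base case $k=1$ is genuinely not covered by what you wrote: your paragraph invokes ``$X_0\le n-1$'' and ``permutation invariance'' but never extracts the extra $1/n$. In fact, when $x\notin A_0$ deterministically with $|A_0|=j$ one has $\xi_1(x)=1-q^j$, and in the borderline case $pn=c$ (permitted by Condition~\ref{assume5}) with $j=1$ this equals $c/n$, which exceeds $\frac{c}{n^2}H_0=\frac{c(n-1)}{n^2}$. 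So the stated inequality is actually sharp to the point of failing at $k=1$ in that edge case; the paper's own derivation of \eqref{ole3} relies on \eqref{ol1} at index $k$, which is only available for $k\ge 1$, so it too establishes the bound only for $k\ge 2$ in the case $x\notin A_0$. Your argument is therefore on equal footing with the paper here, but you should be explicit that the one-line route proves the claim for $k\ge 2$, and that $k=1$ either needs a separate (weaker) statement or the additional hypothesis $pn<c$.
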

The proof of the proposition is given below in this section, after the statement of a corollary.
The claim is trivial if the distribution of $X_0$ is invariant with respect to permutation of nodes (see the proof below),
but takes slightly more effort to establish when the symmetry is broken and the nodes cannot be treated as stochastically identical.
\par
We now proceed with the corollary. By virtue of \eqref{ker},
\beq
P(X_{k+1}=0\,|\,X_k)\geq q^{\frac{n^2}{4}}
\feq
uniformly on $X_k,$ and hence
\beqn
\label{ek}
\eta_k=P(X_k>0)<(1-q^{\frac{n^2}{4}})^k\qquad \forall\,k\in\nn.
\feqn
The following result is a refinement of this naive estimate for the subcritical regime.  By Chebyshev's inequality,
\beq
\eta_k=P(X_k\geq 1)\leq E(X_k)=E\Bigl(\sum_{x\in V_n}\cale_k(x)\Bigr)=\sum_{x\in V_n}\xi_k(x).
\feq
This yields
\begin{corollary}
\label{mixingc}
Let Condition~\ref{assume5} hold. Then $\eta_k \leq  \frac{c^k}{n}E(H_0)$ for all $k\in\nn.$
\end{corollary}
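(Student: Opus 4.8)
The plan is to feed the pointwise bound of Proposition~\ref{mixing} into the Markov/Chebyshev estimate that already precedes the statement. Recall that $X_k=\sum_{x\in V_n}\cale_k(x)$ is a sum of $\{0,1\}$-valued indicators, so Markov's inequality gives $\eta_k=P(X_k\geq 1)\leq E(X_k)=\sum_{x\in V_n}\xi_k(x)$, which is precisely the chain displayed just above the corollary. First I would invoke Proposition~\ref{mixing}, which bounds each summand by $\xi_k(x)\leq c^kE(H_0)/n^2$ uniformly in $x\in V_n$. Summing this over the $n$ nodes of $V_n$ yields
\[
\eta_k\ \leq\ \sum_{x\in V_n}\xi_k(x)\ \leq\ n\cdot\frac{c^k}{n^2}E(H_0)\ =\ \frac{c^k}{n}E(H_0),
\]
which is the assertion. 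No additional estimate is needed: the corollary is a one-line consequence of Proposition~\ref{mixing}, the only step being the sum over nodes.

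It is worth noting that the same bound can be obtained even more directly, bypassing Proposition~\ref{mixing} altogether: Markov's inequality gives $\eta_k\leq E(X_k)$, and Corollary~\ref{avec} already asserts $E(X_k)\leq c^kE(H_0)/n$. Hence there is no genuine obstacle in the corollary itself — all the substance sits in the proof of Proposition~\ref{mixing} (equivalently, in Proposition~\ref{heter} and Corollary~\ref{avec}), where the delicate point is the case in which the law of $X_0$ is not invariant under permutations of the nodes, so the probabilities $\xi_k(x)$ cannot simply be taken equal and a separate argument is required.
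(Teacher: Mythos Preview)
Your proof is correct and matches the paper's approach exactly: the paper derives the corollary from the displayed Markov/Chebyshev inequality $\eta_k\leq\sum_{x\in V_n}\xi_k(x)$ together with the bound of Proposition~\ref{mixing}, which is precisely what you do. Your additional remark that the same conclusion follows directly from Corollary~\ref{avec} without passing through Proposition~\ref{mixing} is also valid.
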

Remark that the result in the corollary will be further improved in Theorem~\ref{main7} below using a comparison to a branching process and known estimates for
the extinction time of the latter. In particular, it turns out that if $np=1$ in \eqref{ker}, then
\beq
\eta_k \leq  1-\bigl(\frac{k}{k+2}\bigr)^{i_0}\leq \frac{2i_0}{k+2}\qquad  \forall\,k\in\nn,
\feq
and the bound is asymptotically tight (see the lower bound in Theorem~\ref{main7} and also Corollary~\ref{th7}).
\begin{proof}[Proof of Proposition~\ref{mixing}]
We say that the distribution of $A_0$ is \emph{exchangeable} if it is independent of a particular labeling of the network's nodes, that if
$P(B\subset A_0)=P(\sigma B\subset A_0)$ for any permutation (one-to-one and onto relabeling) $\sigma:V_n\to V_n$ and a cluster of nodes $B\subset V_n.$
\par
Fix any $x\in V_n.$ First, observe that if the distribution of $A_0$ is exchangeable, then
\beq
\xi_k(x)=E\bigl(\cale_k(x)\bigr)=\frac{1}{n}E\Bigl(\sum_{x\in V_n} \cale_k(x)\Bigr)=\frac{1}{n}E(X_k)\leq \frac{c^k}{n}E(H_0)
\feq
by virtue of Proposition~\ref{ave}.
\par
Consider now an auxiliary avalanche process where $A_0$ is chosen uniformly over subsets of $V_n$ of a given size $m.$
We will denote the conditional law of the process $P(\,\cdot\,|A_0)$ by $P_{m,x}$ when $x\in A_0$ and by $P_{m,\ol x}$ otherwise.
Then, in view of the result for exchangeable $A_0$ and Proposition~\ref{mixing},
\beq
\frac{m(n-m)c^k}{n^2}&=&\frac{c^k}{n^2}E(H_0)\geq P\bigl(\cale_k(x)=1\bigr)
\\
&=&\frac{\binom{n-1}{m}}{\binom{n}{m}} P_{m,\ol x}\bigl(\cale_k(x)=1\bigr)+\frac{\binom{n-1}{m-1}}{\binom{n}{m}} P_{m,x}\bigl(\cale_k(x)=1\bigr)
\\
\label{e1}
&\geq&
\frac{\binom{n-1}{m}}{\binom{n}{m}} P_{m,\ol x}\bigl(\cale_k(x)=1\bigr)=\frac{n-m}{n}P_{m,\ol x}\bigl(\cale_k(x)=1\bigr),
\feq
which implies
\beqn
\label{ol}
P_{m,\ol x}\bigl(\cale_k(x)=1\bigr)\leq \frac{mc^k}{n}.
\feqn
Therefore,
\beqn
\nonumber
P_{j,x}\bigl(\cale_{k+1}(x)=1\bigr)&=&\sum_{m=0}^{n-j}\binom{n-j}{m}(1-q^j)^mq^{j(n-j-m)}P_{m,\ol x}\bigl(\cale_k(x)=1\bigr)
\\
\nonumber
&\leq& c^k \sum_{m=0}^{n-j}\binom{n-j}{m}(1-q^j)^mq^{j(n-j-m)}\frac{m}{n}
\\
\label{ole}
&=&
\frac{c^k}{n} E_{j,x}(X_1)=
\frac{c^k(n-j)(1-q^j)}{n} \leq \frac{c^{k+1}j(n-j)}{n^2}.
\feqn
In particular, we have established that
\beqn
\label{ol1}
P_{j,x}\bigl(\cale_k(x)=1\bigr)\leq \frac{c^{k-1}(n-j)(1-q^j)}{n}\leq c^k\frac{j(n-j)}{n^2} \leq \frac{jc^k}{n}.
\feqn
Turning now to $P_{j,\ol x}\bigl(\cale_{k+1}(x),$ write
\beq
P_{j,\ol x}\bigl(\cale_{k+1}(x)=1\bigr)&=&\sum_{m=1}^{n-j-1}\binom{n-j-1}{m}(1-q^j)^mq^{j(n-j-m-1)}P_{m,\ol x}\bigl(\cale_k(x)=1\bigr)
\\
&&
\qquad
+ \sum_{m=1}^{n-j-1}\binom{n-j-1}{m-1}(1-q^j)^{m-1}q^{j(n-j-m)}P_{m,x}\bigl(\cale_k(x)=1\bigr).
\feq 
Using \eqref{ol1} along with \eqref{ol} , we obtain
\beqn 
\nonumber
P_{j,\ol x}\bigl(\cale_{k+1}(x) &\leq&
\frac{c^k}{n}\sum_{m=1}^{n-j-1}\binom{n-j-1}{m}(1-q^j)^mq^{j(n-j-m-1)}m
\\
\nonumber
&&
\qquad
+ \frac{c^k}{n}\sum_{m=1}^{n-j-1}\binom{n-j-1}{m-1}(1-q^j)^{m-1}q^{j(n-j-m)}m
\\
\label{ole3}
&=&
\frac{c^k}{n} E_{j,\ol x}(X_1)=
\frac{c^k(n-j)(1-q^j)}{n}\leq \frac{c^{k+1}j(n-j)}{n^2}.
\feqn
The claim follows now from \eqref{ole} and \eqref{ole3}.
\end{proof}
We next investigate the total number of excited nodes in a subcritical regime. We will use the following lower bound for
$1-q^i.$
\begin{lemma}
\label{loq}
Under Condition~\ref{assume5}, $1-q^i\geq  \frac{di}{n}-\frac{c^2i^2}{2n^2}$ for all $i\in\zz_+.$
\end{lemma}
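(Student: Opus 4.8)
\noindent\emph{Proof proposal.} The plan is to reduce the estimate to Bernoulli's inequality via a telescoping representation of the left-hand side. First I would write, for every $i\in\zz_+$,
\beq
1-q^i=(1-q)\sum_{k=0}^{i-1}q^k=p\sum_{k=0}^{i-1}q^k,
\feq
the sum being empty (and the identity reading $0=0$) when $i=0$. Since $q=1-p$ with $p\in(0,1)$, Bernoulli's inequality gives $q^k=(1-p)^k\geq 1-kp$ for each $k\geq 0$, so summing over $k=0,\ldots,i-1$ yields $\sum_{k=0}^{i-1}q^k\geq i-\frac{i(i-1)}{2}\,p$. Multiplying by $p\geq 0$ produces the key intermediate bound
\beq
1-q^i\geq ip-\frac{i(i-1)}{2}\,p^2.
\feq

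It then remains to invoke Condition~\ref{assume5}, i.e. $d/n\leq p\leq c/n$. For the linear term, $p\geq d/n$ gives $ip\geq di/n$; for the quadratic correction, $p\leq c/n$ together with $i(i-1)\leq i^2$ gives $\frac{i(i-1)}{2}p^2\leq \frac{c^2 i^2}{2n^2}$. Combining, $ip-\frac{i(i-1)}{2}p^2\geq \frac{di}{n}-\frac{c^2 i^2}{2n^2}$, which is exactly the claimed inequality; the degenerate cases $i=0$ and $i=1$ are covered automatically since both sides then reduce to $0$ and to $p\geq d/n-c^2/(2n^2)$ respectively.

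There is no real obstacle here; the only point requiring care is to keep track of the \emph{direction} of each estimate — Bernoulli must be applied so as to bound $q^k$, and hence $1-q^i$, from below, while the subtracted quadratic term must be bounded from above, so that $p$ is replaced by its lower bound $d/n$ in one place and by its upper bound $c/n$ in the other. (One could alternatively establish the intermediate bound in the equivalent form $(1-p)^i\leq 1-ip+\binom{i}{2}p^2$ by induction on $i$, using that the quadratic $1-ip+\binom{i}{2}p^2$ has nonpositive discriminant, hence is nonnegative, for $i\geq 2$; but the telescoping argument above sidesteps the induction entirely.)
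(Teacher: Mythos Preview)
Your proof is correct and follows essentially the same route as the paper: both arguments establish the intermediate inequality $(1-p)^i\leq 1-ip+\binom{i}{2}p^2$ and then insert the bounds $d/n\leq p\leq c/n$ from Condition~\ref{assume5} exactly as you do. The only difference is in how that intermediate inequality is obtained---the paper uses the second-order Taylor expansion of $(1-p)^i$ with Lagrange remainder, whereas you use the geometric-sum identity together with Bernoulli's inequality---but this is a cosmetic variation rather than a different strategy.
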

\begin{proof}
The lemma is trivial for $i=0,1.$ For $i\geq 2,$ using the Lagrange form of the second order remainder in Taylor's series for $f(p)=(1-p)^i$ around zero,
\beq
(1-p)^i=1-ip+\frac{i(i-1)}{2}p^2(1-p_*)^{i-2}\leq1-ip+\frac{i(i-1)}{2}p^2\leq 1- \frac{di}{n}+\frac{c^2i^2}{2n^2}
\feq
for some $p_*\in (0,p).$
\end{proof}
Recall $S$ from \eqref{dusize}. We have:
\begin{proposition}
\label{expo}
Suppose that Condition~\ref{assume5} holds with $c\in (0,1).$ Then,
\beqn
\label{sumse}
\frac{E(X_0)}{1-d}-\frac{3E(X_0^2)}{n(1-c)^3}
\leq E(S) \leq \frac{E(X_0)}{1-c}.
\feqn
\end{proposition}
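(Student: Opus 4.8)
The plan is to establish the two inequalities by different arguments, both resting on the identity $E(S)=\sum_{k=0}^{\infty}E(X_k)$, which holds by Tonelli's theorem since the terms are non-negative and $X_k=0$ for every $k\geq T$. The upper bound is then immediate: by Proposition~\ref{ave} the sequence $(X_kc^{-k})_{k\geq 0}$ is a supermartingale, so $E(X_k)\leq c^kE(X_0)$, and summing the geometric series gives $E(S)\leq E(X_0)/(1-c)$.

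For the lower bound I would track the two sequences $m_k:=E(X_k)$ and $s_k:=E(X_k^2)$. Since $X_{k+1}$ is, conditionally on $X_k$, a binomial variable, $E(X_{k+1}\,|\,X_k)=(n-X_k)(1-q^{X_k})$; inserting the lower estimate of Lemma~\ref{loq} for $1-q^{X_k}$ and bounding $n-X_k\leq n$ to discard the resulting cross term gives, after taking expectations,
\[
m_{k+1}\ \geq\ d\,m_k-\frac{d+c^2/2}{n}\,s_k .
\]
For the second moments I would start from the exact formula \eqref{sm}, bound $q^{X_k}\leq 1$ in its first summand and use the inequality $(n-X_k)(1-q^{X_k})\leq cX_k$ contained in \eqref{avee} in both summands, obtaining $E(X_{k+1}^2\,|\,X_k)\leq cX_k+c^2X_k^2$ and hence $s_{k+1}\leq c\,m_k+c^2 s_k$. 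Iterating this last recursion together with $m_k\leq c^kE(X_0)$ yields $s_k\leq c^{2k}E(X_0^2)+\frac{c^k}{1-c}E(X_0)$, and since $X_0\geq 1$ a.s.\ forces $E(X_0)\leq E(X_0^2)$, summation over $k$ gives the a priori bound $\sum_{k\geq 0}s_k\leq 2E(X_0^2)/(1-c)^2$.

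To conclude, I would iterate the inequality for $m_k$ to obtain $m_k\geq d^km_0-\frac{d+c^2/2}{n}\sum_{j=0}^{k-1}d^{\,k-1-j}s_j$, sum over $k\geq 0$, interchange the two summations (legitimate by non-negativity of the $s_j$), and use $\sum_{k>j}d^{\,k-1-j}=1/(1-d)$ to arrive at
\[
E(S)\ \geq\ \frac{E(X_0)}{1-d}-\frac{d+c^2/2}{n(1-d)}\cdot\frac{2E(X_0^2)}{(1-c)^2} .
\]
The assertion then follows from the crude estimates $1-d\geq 1-c$ and $d+c^2/2\leq c+c^2/2<3/2$, valid since $c\in(0,1)$, which together bound the error term by $3E(X_0^2)/\bigl(n(1-c)^3\bigr)$.

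The computations above are routine; the one genuinely delicate point is the bookkeeping of constants in the error term — checking that after replacing $1-d$ by $1-c$ in the denominator and bounding $d+c^2/2$ by $3/2$ one is left with exactly the constant $3$, and nothing larger, in $3E(X_0^2)/\bigl(n(1-c)^3\bigr)$. A minor but necessary point to record is the justification of $E(S)=\sum_kE(X_k)$ and of $\sum_ks_k<\infty$, the latter being precisely what makes the term-by-term summation of the lower bounds for $m_k$ legitimate.
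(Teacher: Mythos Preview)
Your proof is correct and follows essentially the same route as the paper: the upper bound via the supermartingale estimate $E(X_k)\le c^kE(X_0)$, and the lower bound by combining Lemma~\ref{loq} for $E(X_{k+1}\mid X_k)$ with the second-moment recursion $s_{k+1}\le c\,m_k+c^2 s_k$ derived from \eqref{sm}, then iterating and summing. The only organizational difference is that the paper first substitutes the explicit bound $s_k\le c^{2k}E(X_0^2)+\frac{c^k}{1-c}E(X_0)$ into the recursion for $m_k$, iterates, and then sums the resulting closed-form lower bound on $m_k$; you instead iterate while keeping the $s_j$'s, sum, interchange, and only then apply the aggregate bound $\sum_j s_j\le 2E(X_0^2)/(1-c)^2$ --- a slightly cleaner bookkeeping that arrives at the same constant.
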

\begin{proof}
To prove the proposition, we will first obtain a suitable lower bound for $E(X_k).$ It follows from Lemma~\ref{loq} that
\beqn
\nonumber
E(X_{k+1}|X_k)&\geq& (n-X_k)\Bigl(\frac{dX_k}{n}-\frac{c^2X_k^2}{2n^2}\Bigr)=
dX_k-\frac{c^2X_k^2}{2n}-\frac{dX_k^2}{n}+\frac{c^2X_k^3}{2n^2}
\\
\label{sume}
&\geq&
dX_k-\frac{3cX_k^2}{2n},
\feqn
where we used the fact that $d<c$ and $c^2<c.$ Using the identity in \eqref{sm}, we obtain
\beqn
\nonumber
E(X_k^2)&=&
E\bigl[(n-X_{k-1})(1-q^{X_{k-1}})q^{X_{k-1}}+n^2(1-q^{X_{k-1}})^2\bigr]
\\
\nonumber
&\leq& E\bigl[n(1-q^{X_{k-1}})+n^2(1-q^{X_{k-1}})^2\bigr]
\\
\label{sqe}
&\leq& E\Bigl[n\cdot \frac{c X_{k-1}}{n}+n^2 \frac{c^2 X_{k-1}^2}{n^2}\Bigr]=
E\bigl(c X_{k-1}+c^2 X_{k-1}^2\bigr).
\feqn
Iterating,
\beq
E(X_k^2)&\leq&E\bigl(c X_{k-1}+c^2 X_{k-1}^2\bigr)
\leq  E\bigl(c X_{k-1}+c^3 X_{k-2} +c^4X_{k-2}^2\bigr)
\\
&\leq & c^{2k}E(X_0^2)+\sum_{j=1}^k c^{2j-1}E(X_{k-j})
\leq  c^{2k}E(X_0^2)+\sum_{j=1}^k c^{k+j-1}E(X_0)
\\
&\leq & c^{2k}E(X_0^2)+\frac{c^k}{1-c}E(X_0).
\feq
Therefore,
\beq
E(X_{k+1})\geq
dE(X_k)-\frac{3c}{2n}E\Bigl[c^{2k}E(X_0^2)+\frac{c^k}{1-c}E(X_0)\Bigr].
\feq
Iterating again, we obtain
\beq
E(X_{k+1})&\geq& d^{k+1}E(X_0)-\frac{3c}{2n}\sum_{j=0}^k\Bigl[c^{2(k-j)}d^jE(X_0^2)+\frac{c^{k-j}d^j}{1-c}E(X_0) \Bigr]
\\
&\geq& d^{k+1}E(X_0)-\frac{3c}{2n}\sum_{j=0}^k\Bigl[c^{2k+j}E(X_0^2)+\frac{c^k}{1-c}E(X_0) \Bigr]
\\
&\geq&
d^{t+1}E(X_0)-\frac{3c^{k+1}}{2n(1-c)}\bigl[c^kE(X_0^2)+(k+1)E(X_0) \bigr].
\feq
Using this bound along with the upper bound in Corollary~\ref{avec}, we obtain that
\beq
d^kE(X_0)-\frac{3c^k}{2n(1-c)}\bigl[c^kE(X_0^2)+kE(X_0) \bigr]\leq E(X_k)\leq c^kE(X_0).
\feq\
Therefore, summing over all indexes from zero to $k,$
\beq
\frac{E(X_0)}{1-d}-\frac{3}{2n(1-c)}\Bigl[\frac{1}{1-c^2}E(X_0^2)+\frac{c}{(1-c)^2}E(X_0) \Bigr].
\leq E\Bigl(\sum_{k=0}^\infty X_k\Bigr) \leq \frac{E(X_0)}{1-c}
\feq
Taking in account that $c< 1,$ $1-c^2<(1-c)^2,$ and $E(X_0)\leq E(X_0^2),$
we obtain the lower bound in the form given in the statement of the proposition.
\end{proof}
We remark that though the constant $\frac{3E(X_0^2)}{(1-c)^3}$ in front of $1/n$ in the correction term at the left-hand side
of \eqref{sumse} is not optimal, the lower bound captures correctly the dependence of this term on $n.$
The latter result is formally stated in part (iii) of Theorem~\ref{nsumse} below.
\section{Poisson approximation and the size of the avalanche}
\label{bpapr}
The primary goal of this section is to study the asymptotic behavior of the total size of the avalanche for a
certain ensemble of comparable avalanche models. The underlying family of models is introduced in equation \eqref{ker1} and Assumption~\ref{assume7} below,
and the main result of this section is stated in Theorem~\ref{nsumse}. The secondary purpose of this section is to introduce
a branching process approximation which will be used throughout the rest of the paper.
\par
In the rest of the paper, along with a single network $X,$ we will often consider a family of Markov chains $\xn=(\xn_k)_{k\in\zz_+},$
each governed by a transition kernel of the same type as in \eqref{ker}, namely
\beqn
\label{ker1}
P_n(i,j):=P\bigl(\xn_{k+1}=j\bigl|\xn_k=i\bigr)=\binom{n-i}{j} (1-q_n^i)^j(q_n^i)^{n-i-j},
\feqn
for some $q_n\in (0,1)$ and all $i=0,1,\ldots,n,$ $j\leq n-i.$ In this definition we maintain the convention
that $\binom{0}{0}=1$ and $\binom{0}{j}=0$ for all $i\in\nn$ in \eqref{ker1}.
Therefore, all Markov chains in this collection eventually absorb at zero. Typically we will impose the following comparability assumption
on the family of avalanche models under consideration:
\begin{assume}
\label{assume7}
\item [(i)] There exists $\lambda>0$ such that $\lim_{n\to\infty} np_n=\lambda,$ where $p_n=1-q_n.$
\item [(ii)] All $\xn$ have the same initial state, namely $\xn_0=i_0$ for some $i_0\in\nn$ and all $n\in\nn.$
\end{assume}
Some of our asymptotic estimates will be stated in terms of arbitrary numbers $c>0,$ $d>0,$ and $i_0\in \nn$ that satisfying the following condition.
This condition is an analogue of Condition~\ref{assume5} for a family of networks that satisfies Assumption~\ref{assume7}.
\begin{assume}
\label{cdla}
Let $p_n\in(0,1),$ $n\in\nn,$ be given and consider a family of avalanche models $\{X^{(n)}:n\in\nn\}$ with
transition kernels defined in \eqref{ker1}. Assume that part (ii) of Assumption~\ref{assume7} is in force and, furthermore,
there exist constants $c>0,$ $d\in (0,c),$ and $n_0\in\nn$ such that
\item [(i)]  $np_n \in [c,d]$ for all $n\geq n_0.$
\item [(ii)] If part (i) of Assumption~\ref{assume7} holds and $\lambda>1,$ then $d>1.$
\item [(iii)] If part (i) of Assumption~\ref{assume7} holds and $\lambda<1,$ then $c<1.$
\end{assume}
It follows from \eqref{ker} that under Assumption~\ref{assume7}, for any $i\in\nn,$ $j\in\zz_+,$ and $k\in \nn,$
\beqn
\label{kernelc}
\lim_{n\to\infty} P\bigl(\xn_{k+1}=j\bigl|\xn_k=i\bigr)=e^{-\lambda i}\frac{(\lambda i)^j}{j!}.
\feqn
Let $Z^{(\lambda)}=\bigl(Z^{(\lambda)}_k\bigr)_{k\in \zz_+}$ be a Markov chain on $\zz_+$ with absorption state at zero, Poisson transition kernel
\beq
P\bigl(Z^{(\lambda)}_{k+1}=j\bigl|Z^{(\lambda)}_k=i\bigr)=e^{-\lambda i}\frac{(\lambda i)^j}{j!},\qquad i\in \nn,~j\in\zz_+,
\feq
and the same initial state $Z^{(\lambda)}_0=i_0,$ the same as for all $X^{(n)}.$
We can assume without loss of generality that $Z^{(\lambda)}$ is a Galton-Watson branching process with a Poisson offspring distribution, namely
\beqn
\label{br}
Z^{(\lambda)}_{k+1}=\sum_{j=1}^{Z^{(\lambda)}_k} Y^{(\lambda)}_{k,j}
\feqn
for a collection of independent Poisson random variables $Y=\{Y^{(\mu)}_{k,j}:k\in \zz_+,j\in\nn,\mu>0\}$ such that for all $i\in\zz_+,$
\beq
P\bigl(Y^{(\mu)}_{k,j}=i\bigr)=e^{-\mu}\frac{\mu^i}{i!}.
\feq
The sum in the right-hand side of  \eqref{br} is assumed to be zero if $Z^{(\lambda)}_k=0,$ that is $Z^{(\lambda)}_k$ is formally defined for all $k\in\zz_+.$
\par
The convergence in \eqref{kernelc} implies the weak convergence of the sequence of Markov processes  $\xn$ to the branching process 
$Z^{(\lambda)}$ as $n\to\infty$ \cite{karr}. To illustrate the functionality of the branching approximation, Fig.~\eqref{f100} and~\eqref{f103} 
below provide plots of $E(T)$ as a function of the initial state $i_0$ for $n=100$ and $n=1000,$ in each case for four values of the 
parameter $c=np$ concentrated around the theoretical phase transition value $c=1$ suggested by the branching approximation. 
Note that $E(T)<\infty$ by virtue of \eqref{ek}. 
\par
Let $Q$ be $(n-1)\times (n-1)$ matrix with entries $Q(i,j)=P(X_{k+1}=j\,|\,X_k=i).$
To evaluate the expectation we use the following standard Markov chain matrix calculation:
\beq
E(T)=\sum_{m=0}^\infty P(T>m)=\sum_{m=0}^\infty Q^m e=(I-Q)^{-1}e,
\feq
where $e\in\rr^{n-1}$ is an $(n-1)$-vector with all entries equal to one and $I$ is the $(n-1)$-dimensional unit matrix.
To compute the inverse matrix in the above expression we used the packages ``numpy" and ``decimal" on Python~3.5 with the computation
precision set to 400 decimal points.
\par
An intuitive reason for the uniformly (on $i_0$) large values of $E(T)$ and high persistence of the avalanche in the supercrtical regime, 
when $n$ is large, is that the stochastic path of the Markov chain $X$ is well approximated by a trajectory of a
deterministic dynamical system that is locally Lipschitz, and consequently is quickly attracted to its unique (non-zero) global stable point (see 
Section~\ref{dapprox} below for details). Heuristically, it appears that the Markov chain spends most of its 
time before the absorbtion being ``trapped" in a neighborhood of the stable point. 
The numerical simulations show that the phase transition in the avalanche model doesn't occur at exactly $c=1$ for either $n=100$ or $1000.$ 
While the phase transition is fairly smooth for $n=100,$ it is considerably more sharp and conspicuous for $n=1000.$ Overall, one can conclude 
that the branching approximation gives a useful qualitative insight into the existence of an asymptotic phase transition in the avalanche model.      
\par
In what follows we will exploit the following explicit monotone coupling of $\xn$ with a branching process.
For future convenience, we state the result in terms of a family of avalanche models rather than a single network.
At the base of the construction is a standard coupling between a binomial $B(n,p)$ and a Poisson$\bigl(-n\log(1-p)\bigr)$ random variables.
\begin{proposition}
\label{coupling1}
Let Assumption~\ref{assume7} hold. Then for every $n\geq n_0$ there exists a Markov chain $(\xc_k,\zc_k)_{k\in\zz_+}$ on $\zz_+^2$ such that the following holds true:
\item[(i)] $(\xc_k)_{k\in\zz_+}$ is distributed the same as $(X^{(n)}_k)_{k\in\zz_+}.$
\item[(ii)] $(\zc_k)_{k\in\zz_+}$ is distributed the same as $(Z^{(c)}_k)_{k\in\zz_+}.$
\item[(iii)] With probability one, $\xc_0=\zc_0$ and $\xc_k\leq \zc_k$ for all $k\in\nn.$
\end{proposition}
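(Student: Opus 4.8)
The plan is to realize $\xc$ and $\zc$ together on a single probability space carrying independent Poisson variables, arranged so that $\zc$ is literally the Poisson Galton--Watson recursion \eqref{br} and each transition of $\xc$ is obtained by thinning the Poisson offspring generated by the corresponding generation of $\zc$. The one non-probabilistic ingredient is the classical monotone coupling of a binomial $B(m,\pi)$ with a Poisson$\bigl(-m\log(1-\pi)\bigr)$ variable: a $B(m,\pi)$ variable vanishes with probability $(1-\pi)^m=\exp\bigl(m\log(1-\pi)\bigr)$, which is also the probability that a Poisson$\bigl(-m\log(1-\pi)\bigr)$ variable vanishes, so the two can be coupled with the binomial never exceeding the Poisson; splitting the Poisson parameter into $m$ equal parts (Poisson additivity) even realizes each of the $m$ underlying Bernoulli$(\pi)$ trials as the indicator that an independent Poisson$\bigl(-\log(1-\pi)\bigr)$ variable is positive.

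First I would record the per-step domination. Fix $n\geq n_0$ and a state $i$ with $1\leq i\leq n-1$; note that, started at $i_0<n$, the chain $\xn$ stays in $\{0,1,\dots,n-1\}$. Think of the $i$ excited nodes as ``parents'': each excites each of the $n-i$ resting nodes independently with probability $p_n$, so the number of newly excited nodes equals the cardinality of the union of the $i$ reached sets and is therefore at most $R_1+\cdots+R_i$, where $R_y$ counts the resting nodes reached by parent $y$ and the $R_y$ are i.i.d.\ $B(n-i,p_n)$. By the binomial--Poisson coupling each $R_y$ is stochastically dominated by a Poisson$\bigl(-(n-i)\log q_n\bigr)$ variable; and since $i\geq 1$ gives $n-i\leq n-1$ while $-(n-1)\log q_n\leq c$ for $n\geq n_0$ — this is where the bound $np_n\leq c$ enters (transparently when $c\leq1$, via $-(n-1)\log(1-p_n)\leq(n-1)(p_n+p_n^2)\leq c+(c-1)p_n\leq c$; recall also that $-(n-1)\log q_n\to\lambda$ by Assumption~\ref{assume7}(i)) — each $R_y$ is in fact dominated by a Poisson$(c)$ variable. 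Thus, conditionally on $\xc_k=i$, the next avalanche size can be coupled below the total offspring of $i$ individuals in one Poisson$(c)$ Galton--Watson generation, the coupling being effected inside those offspring by the splitting recalled above.

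Next I would assemble the pair. Take i.i.d.\ Poisson$(c)$ variables $\{W^{(k)}_y:k\in\zz_+,\,y\in\nn\}$ together with an independent array of uniform variables driving the Poisson splittings and Bernoulli thinnings. Set $\zc_0=i_0$ and $\zc_{k+1}=\sum_{y=1}^{\zc_k}W^{(k)}_y$; this is exactly \eqref{br}, so $\zc$ has the law of $Z^{(c)}$ and (ii) holds. Set $\xc_0=i_0$ and, given the past with $\xc_k=i\geq1$, split each of $W^{(k)}_1,\dots,W^{(k)}_i$ into $n-i$ i.i.d.\ Poisson$\bigl(-\log q_n\bigr)$ pieces plus an independent remainder of mean $c+(n-i)\log q_n$, which is nonnegative by the inequality above; declare that parent $y$ reaches resting node $\ell$ exactly when the $(y,\ell)$-th piece is positive (a Bernoulli$(p_n)$ event, independent over $(y,\ell)$); and let $\xc_{k+1}$ be the number of resting nodes reached by at least one of the $i$ parents, with $\xc_{k+1}=0$ if $\xc_k=0$. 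Then $(\xc_k,\zc_k)_{k\in\zz_+}$ is a Markov chain on $\zz_+^2$; conditionally on $\xc_k=i$ the variable $\xc_{k+1}$ has law $B\bigl(n-i,1-q_n^i\bigr)$, which is the kernel \eqref{ker1}, so (i) holds; and $\xc_0=\zc_0=i_0$. Finally (iii) follows by induction: if $\xc_k\leq\zc_k$, then the number of resting nodes reached by at least one of the first $\xc_k$ parents is at most the sum of the reached counts of those parents, hence at most $\sum_{y=1}^{\xc_k}W^{(k)}_y$, and so $\xc_{k+1}\leq\sum_{y=1}^{\xc_k}W^{(k)}_y\leq\sum_{y=1}^{\zc_k}W^{(k)}_y=\zc_{k+1}$.

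The calculations above are routine; the delicate point is the bookkeeping in the previous paragraph — carrying out the Poisson splittings and Bernoulli thinnings simultaneously for every (random, unbounded) value of $\xc_k$, using the auxiliary uniforms as the common source of randomness, so that the extracted $\xc_{k+1}$ has the prescribed conditional binomial law \emph{and} stays pathwise below the partial sum $\sum_{y\leq\xc_k}W^{(k)}_y$. This is the standard device of constructing an entire family of couplings on one space, and is where any real subtlety lies. The only remaining hypothesis used is Assumption~\ref{assume7}(ii), which supplies the common initial state $\xc_0=\zc_0=i_0$.
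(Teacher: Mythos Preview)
Your proof is correct and takes a genuinely different route from the paper's. Both constructions rest on the same Bernoulli--Poisson coupling (a Bernoulli$(\pi)$ realised as the indicator that a Poisson$(-\log(1-\pi))$ is positive) and on the arithmetic inequality $-(n-1)\log q_n\leq c$, needed so that the leftover Poisson mass is nonnegative; but they organise the bookkeeping dually. The paper indexes by the $n-x$ \emph{resting} nodes: to node $j$ it attaches a Poisson variable of mean $\tfrac{cz}{n-x}$, split as $\tfrac{cx}{n-x}+\tfrac{c(z-x)}{n-x}$; the first summand is thinned by an auxiliary coin to a Bernoulli$(1-q_n^x)$ indicator, so summing over $j$ gives the binomial law of $\xc_{k+1}$ directly, while summing all the Poissons yields Poisson$(cz)$ for $\zc_{k+1}$. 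You instead index by the $z$ \emph{parents}: each carries a Poisson$(c)$ offspring count $W^{(k)}_y$, so $\zc_{k+1}=\sum_{y\leq z}W^{(k)}_y$ is literally the recursion \eqref{br}; for the first $x=\xc_k$ parents you split $W^{(k)}_y$ into $n-x$ pieces of mean $-\log q_n$ plus a remainder, use the positivity indicators of the pieces as the per-link Bernoulli$(p_n)$ trials, and set $\xc_{k+1}$ to the cardinality of the union of the reached sets. The paper's node-indexed version makes the binomial marginal of $\xc_{k+1}$ immediate but must rebuild the branching step from $n-x$ summands of state-dependent mean; your parent-indexed version makes the Galton--Watson recursion literal and, as a by-product, yields the intermediate pathwise bound $\xc_{k+1}\leq\sum_{y\leq\xc_k}W^{(k)}_y$, which is a little more information than the paper's coupling records.
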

\begin{figure}[ht!]
\centering
\begin{tabular}{ccc}
\includegraphics[width=2.53in]{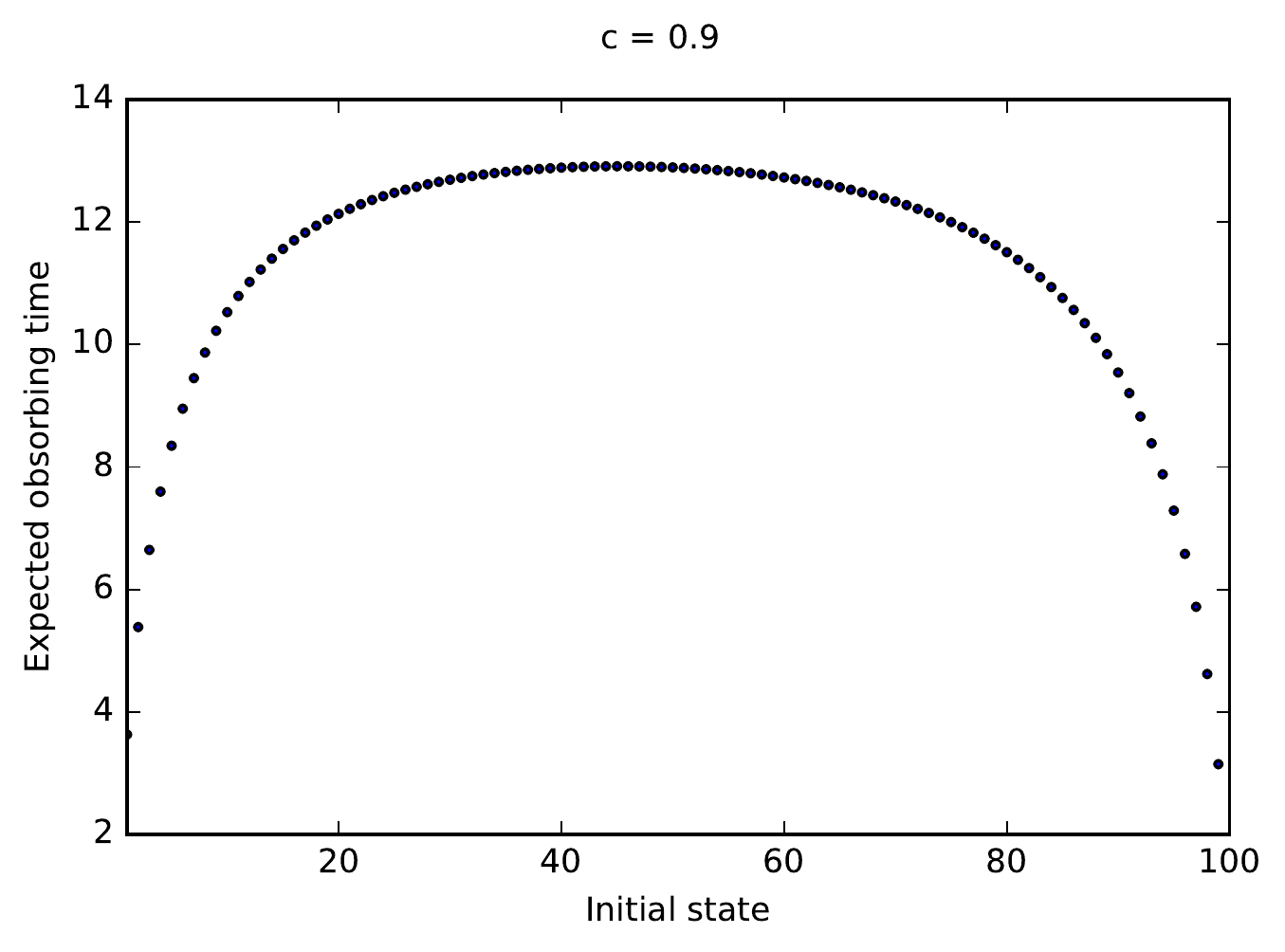}
&
$\mbox{}$\qquad $\mbox{}$
&
\includegraphics[width=2.53in]{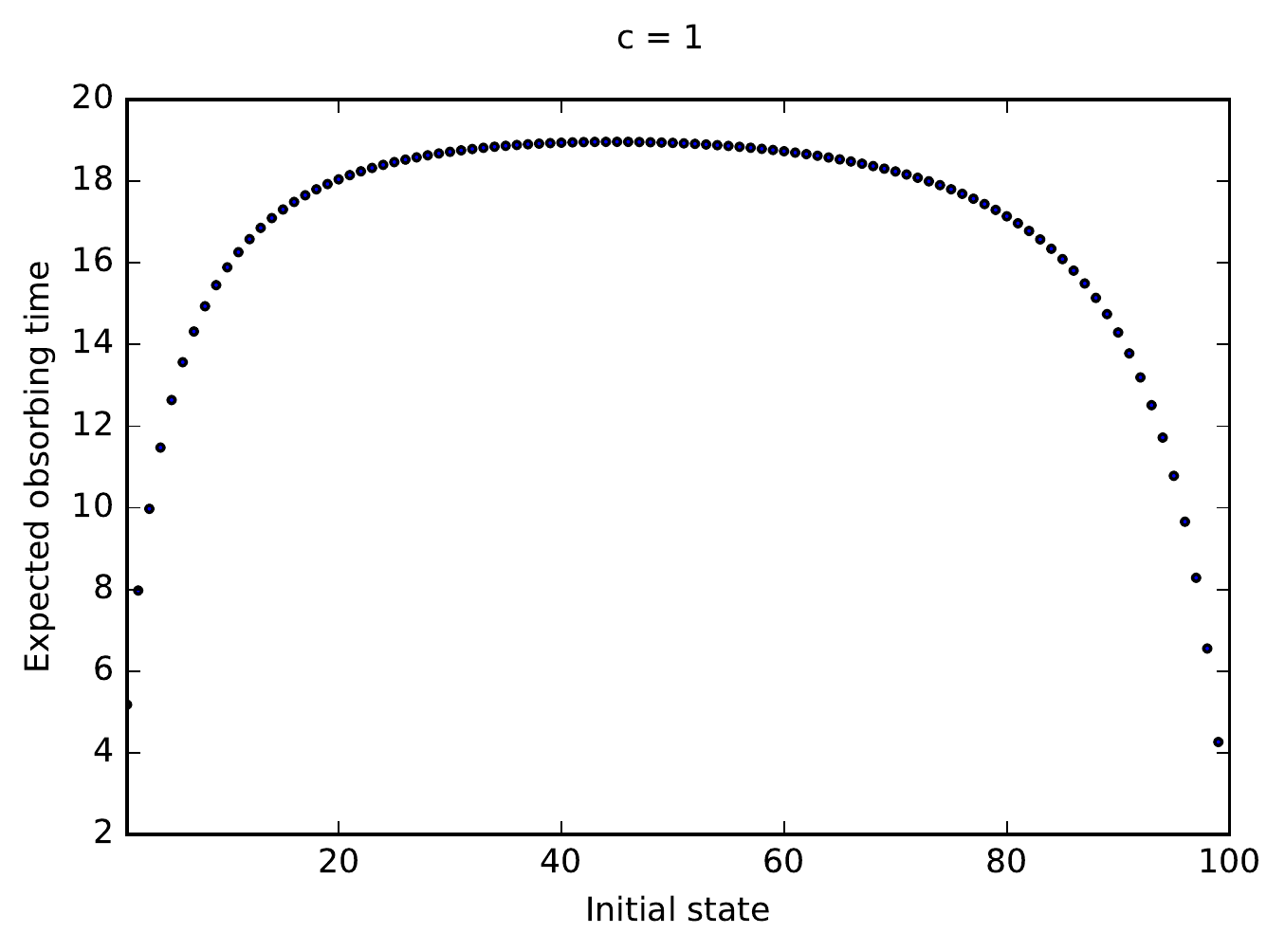}
\\
\includegraphics[width=2.53in]{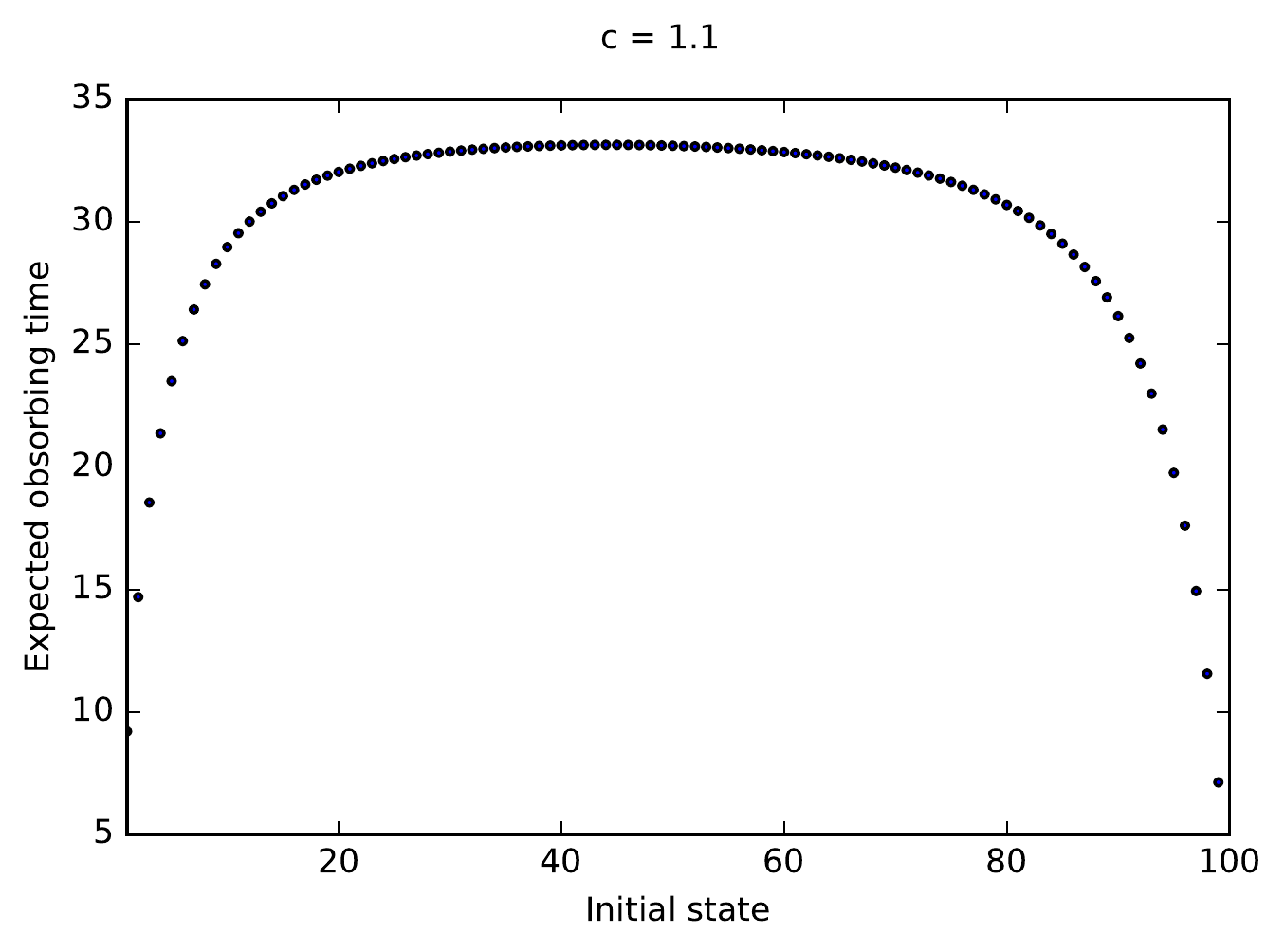}
&
$\mbox{}$\qquad $\mbox{}$
&
\includegraphics[width=2.53in]{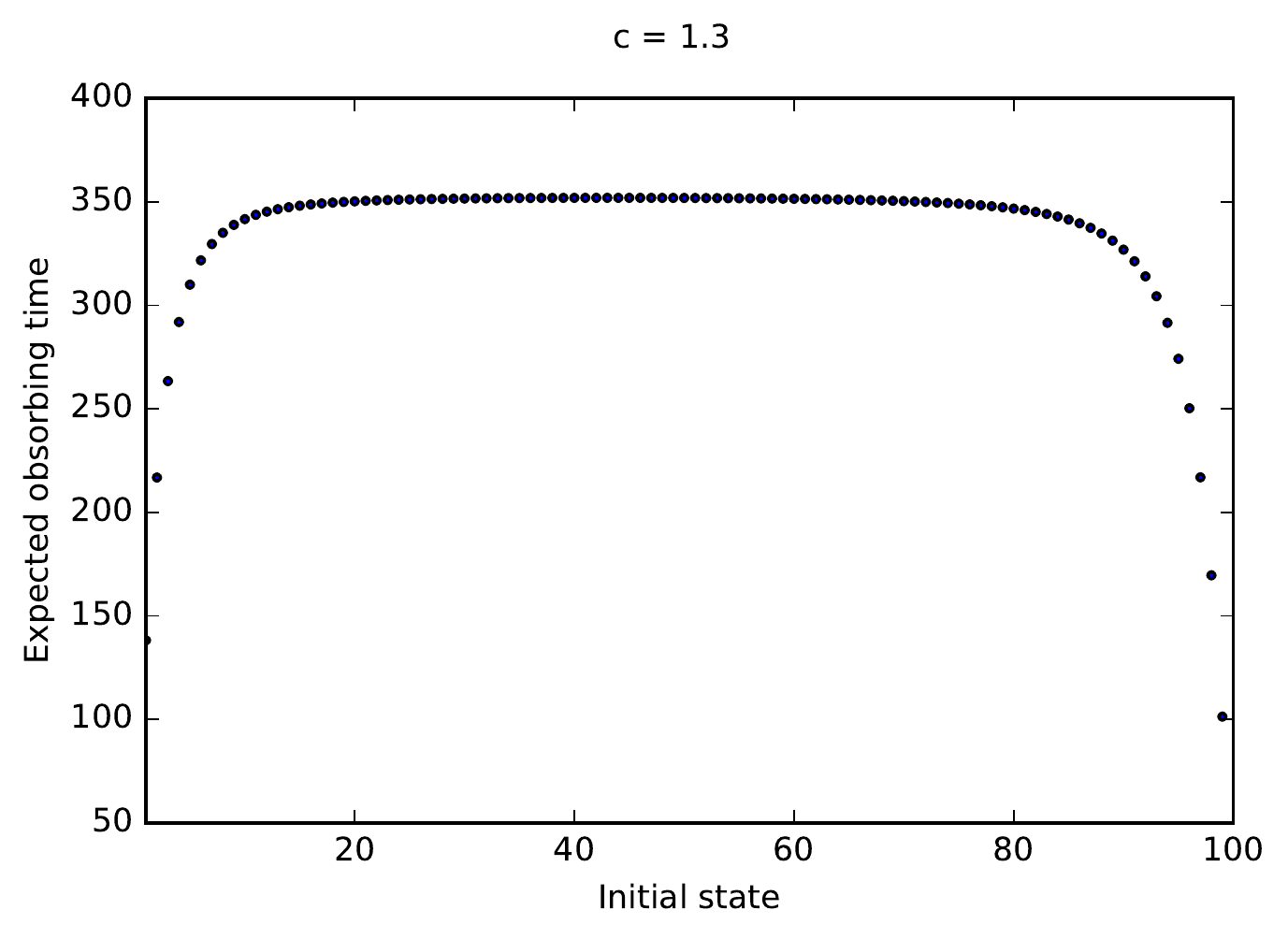}
\end{tabular}
\caption{Plot of the function $f(i_0)=E(T\,|\,X_0=i_0)$ for $n=100$ and several values of the parameter $c=np$
ranging from $c=0.9$ to $c=1.3.$ \label{f100}}
\end{figure}

\begin{figure}[H]
\centering
\begin{tabular}{ccc}
\includegraphics[width=2.53in]{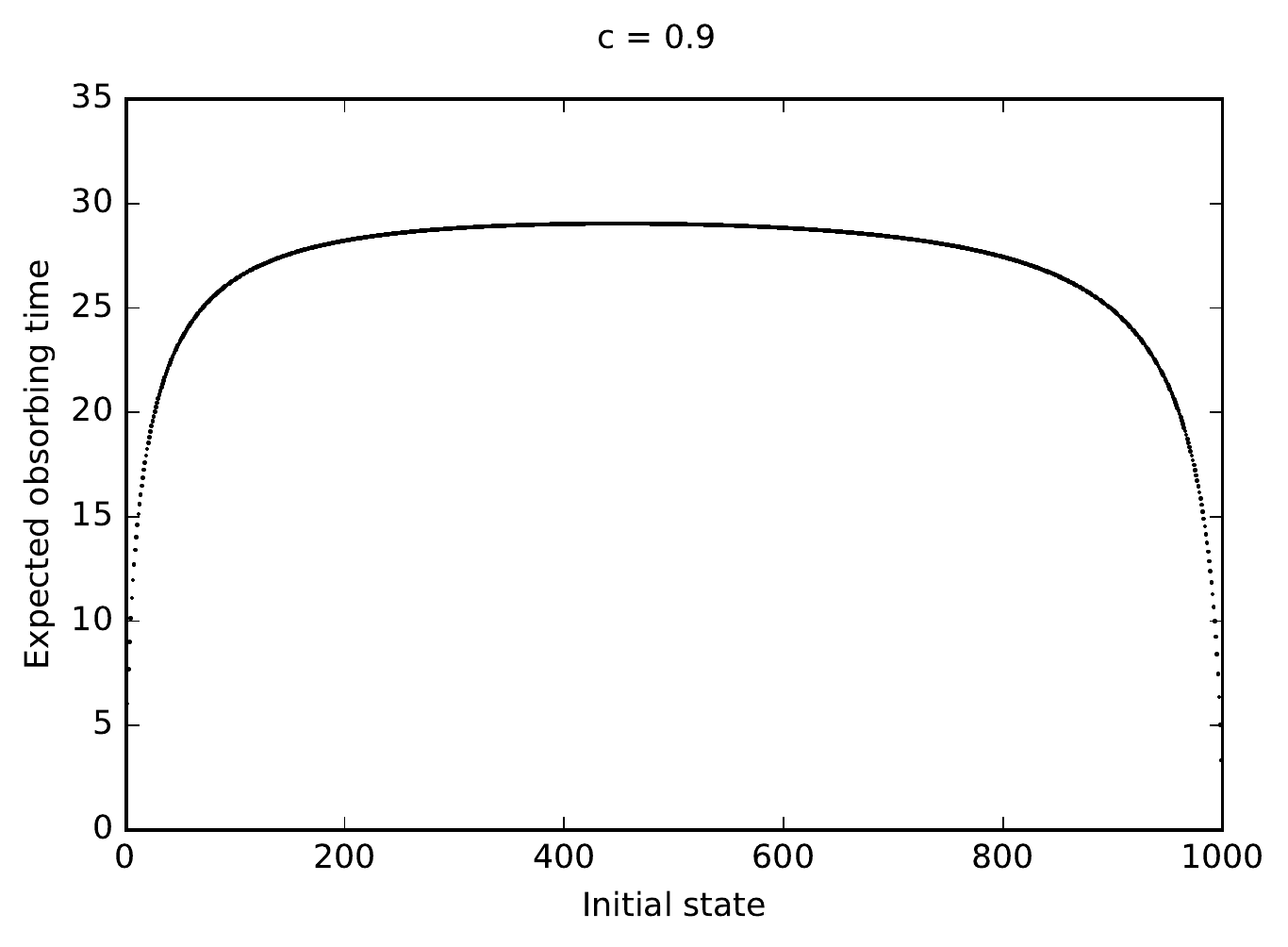}
&
$\mbox{}$\qquad $\mbox{}$
&
\includegraphics[width=2.53in]{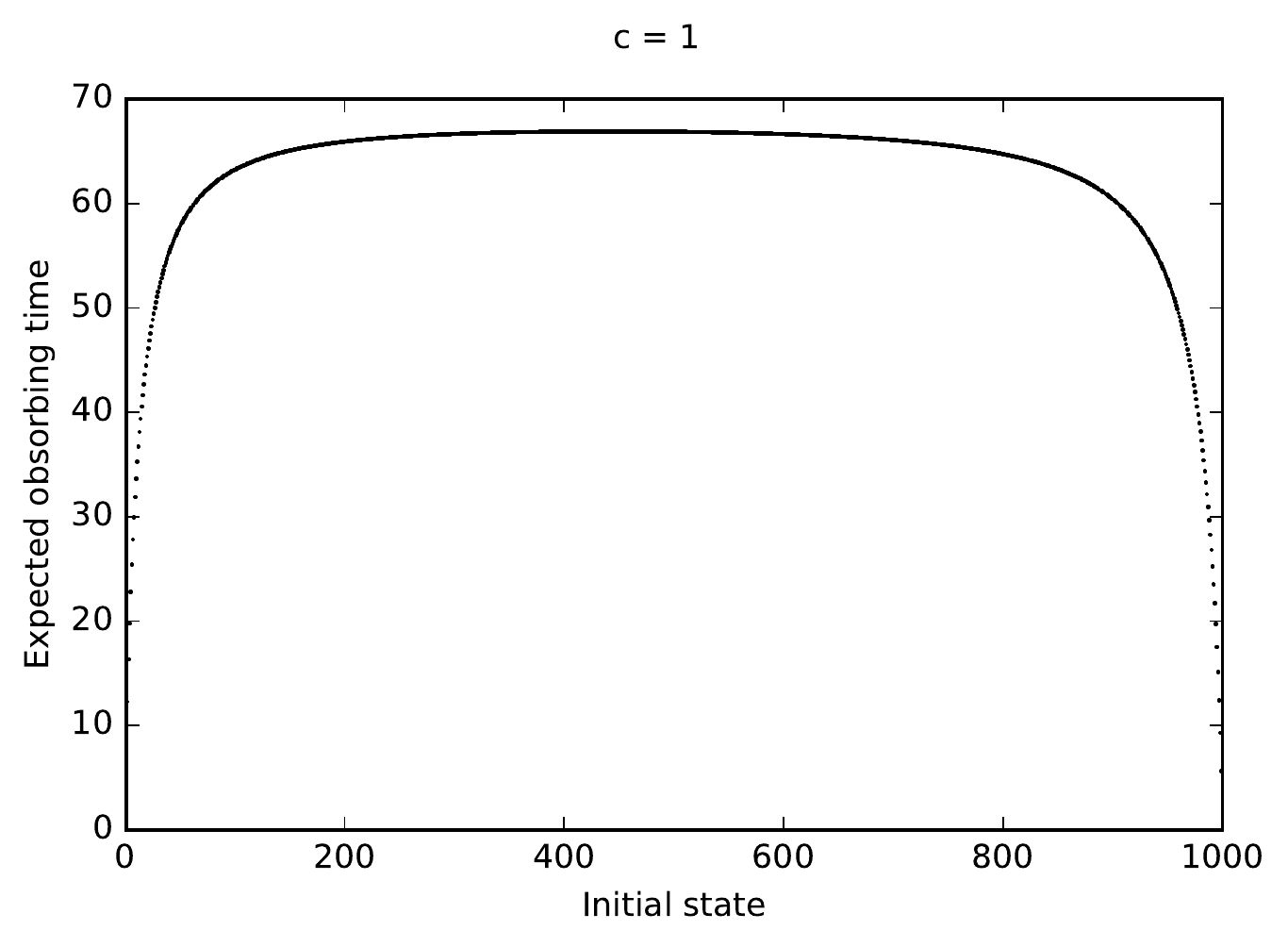}
\\
\includegraphics[width=2.53in]{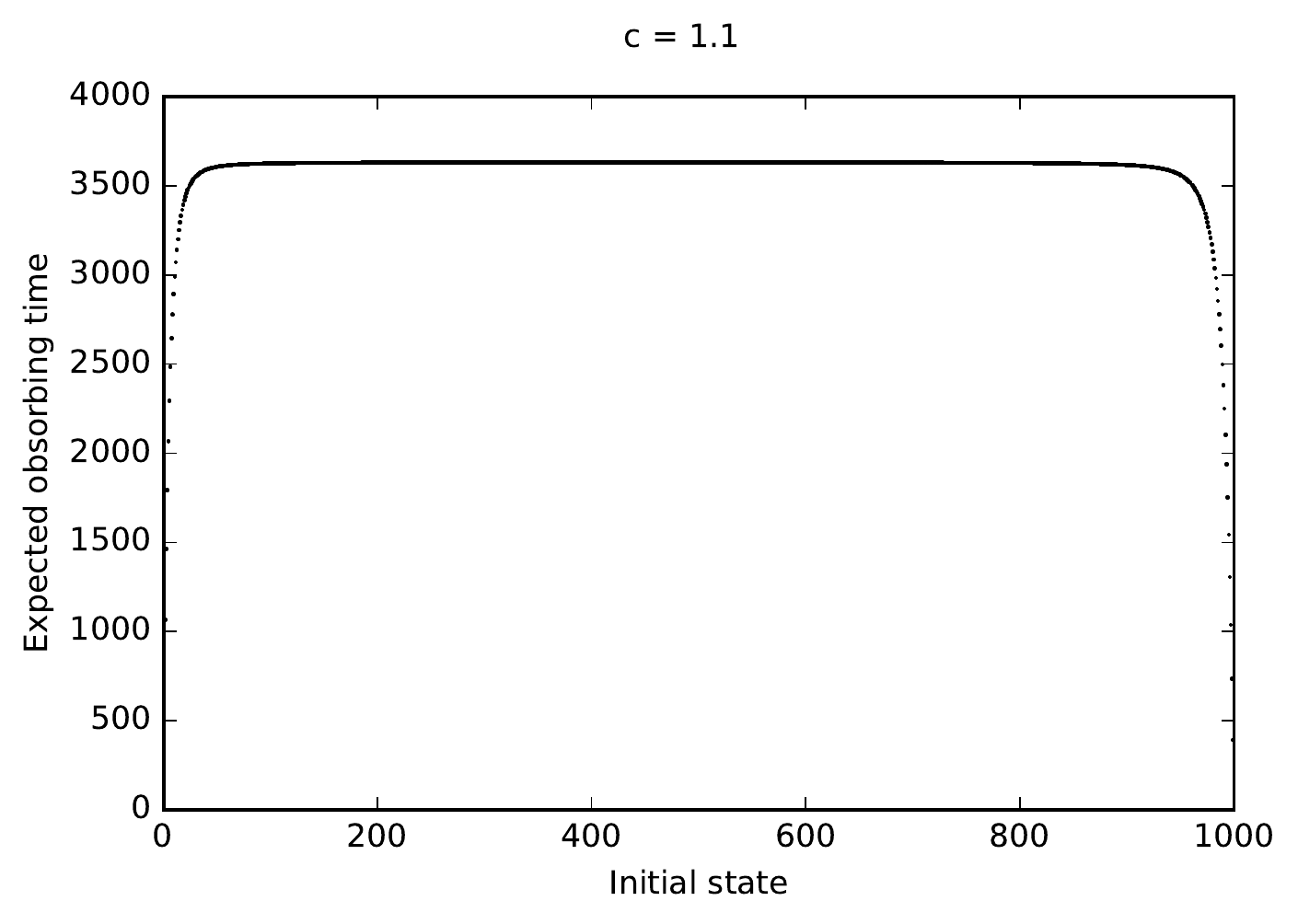}
&
$\mbox{}$\qquad $\mbox{}$
&
\includegraphics[width=2.53in]{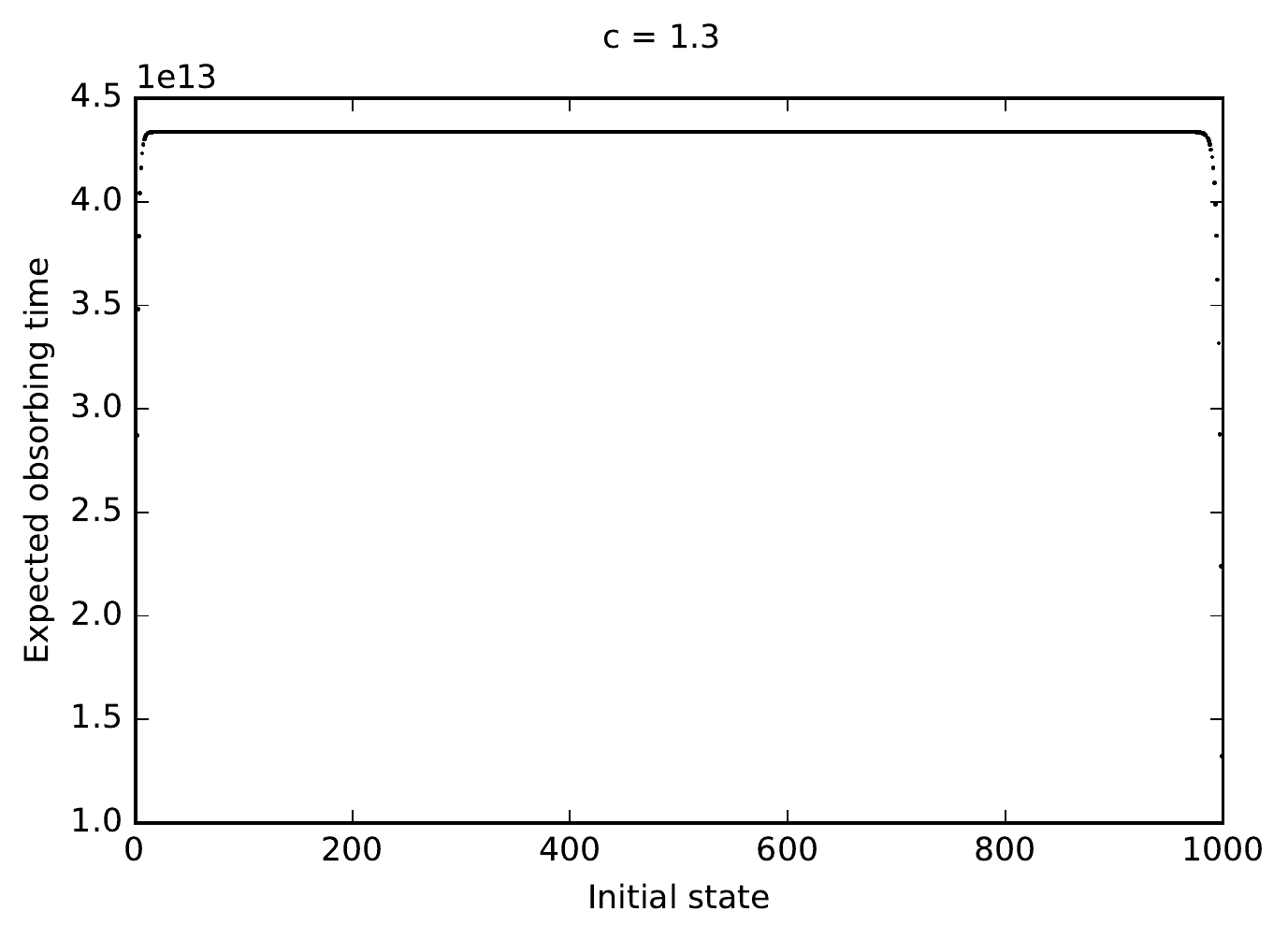}
\end{tabular}
\caption{Plot of the function $f(i_0)=E(T\,|\,X_0=i_0)$ for $n=1000$ and several values of the parameter $c=np$
ranging from $c=0.9$ to $c=`1.3.$ \label{f103}}
\end{figure}
\begin{proof}[Proof of Proposition~\ref{coupling1}]
We have
\beqn
\label{ln}
-\log q_n=-\log(1-p_n)\leq -\log \Bigl(1-\frac{c}{n}\Bigr),\qquad \forall~n\geq n_0.
\feqn
It is easy to check that for any $n\geq n_0$ and $i\in\nn,$
\beqn
\label{nol}
-\log q_n\leq  \frac{c}{n-1}\leq \frac{c}{n-i}.
\feqn
Indeed, if $f(x)=\frac{c}{x-1}+\log \Bigl(1-\frac{c}{x}\Bigr),$ then $\lim_{x\to +\infty}f(x)=0$ and for any $x>1,$
\beq
f'(x)=-\frac{c}{(x-1)^2}+\frac{c}{x^2-cx}\leq -\frac{c}{(x-1)^2}+\frac{c}{x^2-x}=\frac{c}{x-1}\Bigl[-\frac{1}{x-1}+\frac{1}{x}\Bigr]<0,
\feq
implying that $f(x)>0$ for $x>1.$  Let $\bigl\{\ycz_{n,k,j}:n\in\nn,k\in\zz_+,j\in\nn,x\in\zz_+,z\in\zz_+,x< n\bigr\}$ be a collection of independent Poisson random variables such that
\beq
P\bigl(\ycz_{n,k,j}=i)=e^{-\frac{cz}{n-x}}\frac{\bigl(\frac{cz}{n-x}\bigr)^i}{i!}, \qquad i\in\zz_+.
\feq
Further, let $U=\bigl\{\ucx_{n,k,j}:n\in\nn,k\in\zz_+,j\in\nn,x\in\zz_+,x< n\bigr\}$ be a collection of independent
Bernoulli variables which is independent of the family of Poisson variables $Y$ and such that
\beq
P\bigl(\ucx_{n,k,j}=1\bigr)=\frac{1-q_n^x}{1-e^{-\frac{cx}{n-x}}}\qquad \mbox{\rm and}\qquad P\bigl(\uc_{n,k,j}=0\bigr)=\frac{q_n^x-e^{-\frac{cx}{n-x}}}{1-e^{-\frac{cx}{n-x}}}.
\feq
Finally, set
\beq
\Bc_{ n,k,j}=\uc_{n,k,j}\one{\ycx_{k,j}>0}
\feq
and define a Markov chain of integer triples $(\xc_k,\qc_k,\zc_k)_{k\in\zz_+}$ through the initial condition $\zc_0=\qc_0=\xc_k=i_0$ and the recursion
\beqn
\label{zc}
\left\{
\begin{array}{ll}
\zc_{k+1}&=\sum_{j=1}^{n-x} \bigl(\ycx_{n,k,j}+\ycm_{n,k,j}\bigr)
\\
\mbox{}&
\\
\qc_{k+1}&=\sum_{j=1}^{n-x} \ycx_{n,k,j}
\\
\mbox{}&
\\
\xc_{k+1}&=\sum_{j=1}^{n-x}\Bc_{n,k,j}
\end{array}
\right.
\qquad
\mbox{\rm if}\quad \xc_k=x,~\zc_k=z.
\feqn
By induction, $P(\xc_k\leq \qc_k\leq \zc_k)=1$ for all $k\in\nn.$ Furthermore, by our construction $(\xc_k)_{k\in\zz_+}$ is distributed the same as $(X^{(n)}_k)_{k\in\zz_+}$ while $(\zc_k)_{k\in\zz_+}$ is distributed the same as the branching process $(Z^{(c)}_k)_{k\in\zz_+}.$
\end{proof}
Our next result concerns the total size of the avalanche, namely the total number of excited sites created by the avalanche during its entire life span. Let
\beq
S_n=\sum_{k=0}^\infty \xn_k.
\feq
Note that $P(S_n<\infty)=1$ since $\xn$ is an irreducible Markov chain with a unique absorbing state at zero. The following theorem complements
the bounds provided by Proposition~\ref{expo} for a single network with a fixed $n\in\nn.$
The theorem relates asymptotic characteristics of $\xn$ to their counterparts for the limiting branching process $Z^{(\lambda)}.$
\begin{theorem}
\label{nsumse}
Let Assumption~\ref{assume7} hold. Then
\item [(i)] \beq
\lim_{n\to\infty} E(S_n)=\left\{
\begin{array}{ll}
\frac{i_0}{1-\lambda}&\mbox{\rm if}~ \lambda \in (0,1)\\
+\infty&\mbox{\rm if}~ \lambda \geq 1.
\end{array}
\right.
\feq
\item [(ii)] If $\lambda\leq 1,$ $S_n$ converges in distribution as $n\to\infty$ to the Borel-Tanner distribution with parameters $i_0$ and $\lambda,$ that is
\beqn
\label{bt}
\lim_{n\to\infty} P(S_n=j)=\frac{i_0}{j}\frac{(\lambda j)^{j-i_o}}{(j-i_0)!}e^{-\lambda j},\qquad j\geq i_0.
\feqn
\item [(iii)]  If $\lambda> 1,$ then for any $m\geq i_0,$
\beq
\lim_{n\to \infty} P(S_n>m)=1-\alpha_\lambda+\sum_{j=m+1}^\infty \frac{i_0}{j}\frac{(\lambda j)^{j-i_o}}{(j-i_0)!}e^{-\lambda j},
\feq
where $\alpha_\lambda$ is the extinction probability of the branching process $Z^{(\lambda)},$ that is the unique in $(0,1)$ root of the fixed point
equation $\alpha_\lambda=e^{-(1-\alpha_\lambda)\lambda}.$
\item [(iv)] If $\lambda <1,$
\beq
\lim_{n\to\infty} n\Bigl[\frac{i_0}{1-\lambda}-E(S_n)\Bigr]=\frac{3i_0\lambda^2}{2(1-\lambda)^2(1+\lambda)}
+\frac{i_0^2(2\lambda-\lambda^2)}{2(1-\lambda)^2(1+\lambda)}.
\feq
\end{theorem}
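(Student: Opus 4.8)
\emph{Part (i).} I would start from the convergence of first moments $E(\xn_k)\to E\bigl(Z^{(\lambda)}_k\bigr)=i_0\lambda^k$ for each fixed $k,$ which follows from the weak convergence $\xn\Rightarrow Z^{(\lambda)}$ together with uniform integrability supplied by Proposition~\ref{coupling1}: for any fixed $c>\lambda$ the coupling applies once $n$ is large (as $np_n\to\lambda$), so $\xn_k\le Z^{(c)}_k$ in distribution and $E(\xn_k)\le i_0c^k,$ a bound independent of $n.$ If $\lambda<1$ one takes $c<1,$ so $i_0c^k$ is summable in $k$ and dominated convergence in the summation index gives $\lim_n E(S_n)=\lim_n\sum_{k\ge0}E(\xn_k)=\sum_{k\ge0}i_0\lambda^k=\tfrac{i_0}{1-\lambda}.$ If $\lambda\ge1,$ then for each fixed $K$ Fatou's lemma gives $\liminf_n E(S_n)\ge\sum_{k=0}^K i_0\lambda^k,$ and letting $K\to\infty$ forces $E(S_n)\to\infty.$

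\emph{Parts (ii) and (iii).} For these I would use the path decomposition $P(S_n=j)=\sum_{\pi}\prod_{m}P_n(\pi_m,\pi_{m+1}),$ where $\pi$ ranges over the finitely many sequences $i_0=\pi_0,\pi_1,\dots,\pi_{L-1},\pi_L=0$ with $\pi_m\ge1$ for $m<L$ and $\sum_{m<L}\pi_m=j$ (so $L\le j$ and $\pi_m\le j$). For $n>j$ every factor is a genuine transition probability, so by \eqref{kernelc} each converges to the Poisson kernel $e^{-\lambda\pi_m}(\lambda\pi_m)^{\pi_{m+1}}/\pi_{m+1}!,$ whence $P(S_n=j)\to P\bigl(S^{(\lambda)}=j\bigr)$ with $S^{(\lambda)}=\sum_kZ^{(\lambda)}_k.$ By the classical Borel--Tanner formula for the total progeny of a Poisson$(\lambda)$ Galton--Watson process with $i_0$ ancestors, $P\bigl(S^{(\lambda)}=j\bigr)=\tfrac{i_0}{j}\tfrac{(\lambda j)^{j-i_0}}{(j-i_0)!}e^{-\lambda j},$ and the total mass of this family equals the extinction probability of $Z^{(\lambda)}$ --- which is $1$ if $\lambda\le1$ and $\alpha_\lambda$ if $\lambda>1.$ Hence for (ii) pointwise convergence of probability mass functions to one summing to $1$ yields convergence in distribution; and for (iii), since $S_n\ge i_0$ always, $P(S_n>m)=1-\sum_{j=i_0}^{m}P(S_n=j)\to 1-\sum_{j=i_0}^{m}P\bigl(S^{(\lambda)}=j\bigr),$ which rearranges to the stated expression.

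\emph{Part (iv).} Here I would carry out a second-order expansion. With $\rho_n:=np_n$ and Taylor's theorem, $1-q_n^i=ip_n-\binom{i}{2}p_n^2+O(p_n^3i^3),$ so
\[
E(\xn_{k+1}\,|\,\calf_k)=(n-\xn_k)\bigl(1-q_n^{\xn_k}\bigr)=\rho_n\xn_k-p_n\Bigl[(\xn_k)^2+\tfrac{\rho_n}{2}\xn_k(\xn_k-1)\Bigr]+O\bigl(p_n^2\,\mathrm{poly}(\xn_k)\bigr).
\]
Taking expectations, with $a_k=E(\xn_k)$ and $b_k=E\bigl((\xn_k)^2\bigr),$ yields the recursion $a_{k+1}=\rho_na_k-p_n\bigl[(1+\tfrac{\rho_n}{2})b_k-\tfrac{\rho_n}{2}a_k\bigr]+O(p_n^2).$ Since the bracket carries a factor $p_n=O(1/n),$ inside it one may replace $a_k,b_k$ by their limits $i_0\lambda^k$ and $\beta_k:=E\bigl((Z^{(\lambda)}_k)^2\bigr)=i_0^2\lambda^{2k}+\tfrac{i_0\lambda^k(1-\lambda^k)}{1-\lambda}$ (the latter read off from the Poisson$(\lambda)$ offspring law, whose mean and variance both equal $\lambda$). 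The ansatz $a_k=i_0\rho_n^k+n^{-1}g_k+o(1/n)$ then reduces the recursion to the solvable linear one $g_{k+1}=\lambda g_k-\lambda\phi_k,$ $g_0=0,$ with $\phi_k=(1+\tfrac{\lambda}{2})\beta_k-\tfrac{\lambda}{2}i_0\lambda^k;$ summing over $k,$ using $\sum_k i_0\rho_n^k=\tfrac{i_0}{1-\rho_n}$ and $\rho_n\to\lambda,$ gives $\lim_n n\bigl[\tfrac{i_0}{1-\lambda}-E(S_n)\bigr]=\tfrac{\lambda}{1-\lambda}\sum_{m\ge0}\phi_m,$ and evaluating the remaining geometric sums produces the closed form in the statement.

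The main obstacle is making the expansion in (iv) rigorous: all error terms --- the $O(p_n^2\,\mathrm{poly}(\xn_k))$ Taylor remainders, the errors from replacing $a_k,b_k,\rho_n$ by their limits inside the $p_n[\,\cdot\,]$ term, and the contribution of the exponentially unlikely but possibly $\Theta(n)$-sized values of $\xn_k$ --- must be shown summable in $k$ with total $o(1/n),$ so that $\lim_n$ and $\sum_k$ may be interchanged. The enabling tool is again Proposition~\ref{coupling1}: coupling with a \emph{subcritical} $Z^{(c)}$ (with $\lambda<c<1,$ available since $\lambda<1$) dominates every moment $E\bigl((\xn_k)^r\bigr)$ by $E\bigl((Z^{(c)}_k)^r\bigr),$ which decays geometrically in $k$ uniformly in $n.$ The closed form presumes the normalization $n(np_n-\lambda)\to0$ (as when $p_n=\lambda/n$); a nonzero limit of $n(np_n-\lambda)$ would add a further term through the zeroth-order part $\tfrac{i_0}{1-\rho_n}.$
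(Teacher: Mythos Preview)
Your argument for parts~(ii) and~(iii) is correct and genuinely simpler than the paper's. You observe that for each fixed $j,$ the event $\{S_n=j\}$ is a finite union of cylinder events indexed by paths $i_0=\pi_0,\pi_1,\ldots,\pi_L=0$ with $\pi_m\ge 1$ for $m<L$ and $\sum_{m<L}\pi_m=j,$ so pointwise convergence of transition kernels \eqref{kernelc} immediately gives $P(S_n=j)\to P\bigl(S^{(\lambda)}=j\bigr),$ uniformly over all regimes of $\lambda.$ The paper instead splits cases: for $\lambda<1$ it compares Laplace transforms via the coupling and the elementary bound $e^{-\alpha x}-e^{-\alpha y}\le\alpha(y-x);$ for $\lambda\ge 1$ it sandwiches $P(S_n>m)$ between a Fatou lower bound and an Otter--Dwass upper bound obtained by coupling with a slightly supercritical $Z^{(c)},$ $c\downarrow\lambda.$ Your route is more direct and avoids the case distinction entirely; the paper's route, on the other hand, exhibits the role of the monotone coupling (Proposition~\ref{coupling1}) more explicitly and yields en route the quantitative estimate $0\le E(e^{-\alpha S_n})-E(e^{-\alpha S^{(c)}})\le\alpha\,E(S^{(c)}-S_n).$

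For part~(i) your approach and the paper's coincide in substance. For part~(iv) your expansion is the same as the paper's --- Taylor-expand $1-q_n^i,$ sum the resulting recursion, and invoke the subcritical coupling to justify interchanging limit and sum --- and you have correctly spotted something the paper does not make explicit: the stated limit requires $n(np_n-\lambda)\to 0,$ since the leading piece of $E(S_n)$ is $i_0/(1-np_n)$ and $n\bigl[\tfrac{i_0}{1-\lambda}-\tfrac{i_0}{1-np_n}\bigr]=\tfrac{i_0\,n(\lambda-np_n)}{(1-\lambda)(1-np_n)}$ need not vanish under Assumption~\ref{assume7} alone. This is a genuine gap in the statement as written; your caveat at the end is exactly the right qualification.
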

For $\mu>0,$ let $S^{(\mu)}=\sum_{k=0}^\infty Z^{(\mu)}_k.$
By the Otter-Dwass theorem, the limiting distribution in \eqref{bt} is the distribution of $S^{(\lambda)}$ \cite{dwass}.
Similarly to other models using an approximation by the Poisson branching process,
the distribution tails of the total size of the underlying population (avalanche in our case) at the critical regime obey a power law.
Indeed, \eqref{bt} and Stirling's formula implies that when $\lambda=1,$ for large values of $j$ and $n,$ $P(S_n=j)$ is well-approximated by
$\frac{i_0}{j}\frac{(\lambda j)^{j-i_o}}{(j-i_0)!}e^{-\lambda j}\sim i_0\sqrt{\frac{1}{2\pi}}j^{-3/2}.$
\begin{proof}[Proof of Theorem~\ref{nsumse}]
$\mbox{}$
\\
(i) If $\lambda<1$ the result in (i) follows from Proposition~\ref{expo}. If $\lambda\geq 1,$
then a version of Fatou's lemma for weakly convergent sequences implies that for any $J\in\nn,$
\beq
\liminf_{n\to\infty} E\Bigl(\sum_{k=0}^J \xn_k\Bigr)\geq E\Bigl(\sum_{k=0}^J Z^{(\lambda)}_k\Bigr)=i_0\sum_{k=0}^J \lambda^k,
\feq
and the result follows by taking $J$ to infinity. We remark in passing that estimates similar to \eqref{sumse} show that in fact
$\lim_{n\to\infty} E\Bigl(\sum_{k=0}^J \xn_k\Bigr)=i_0\sum_{k=0}^J \lambda^k.$
\\
$\mbox{}$
\\
(ii) Let $c>0$ and $d>0$ be as in Condition~\ref{cdla}.
Assume first that $\lambda\in(0,1).$ A simple argument can be given in order to prove the result in this case.
To prove the convergence of $S_n$ to $S^{(\lambda)}$
we will consider exponential generating functions $E(e^{-\alpha S_n})$  and $E(e^{-\alpha S^{(c)}}),$ $\alpha>0,$ $c>0,$ and use the inequality
$e^{-\alpha x}-e^{-\alpha y}\leq \alpha(y-x)$ which is true for any $y>0$ and $x\in(0,y).$ It follows from \eqref{zc} that
\beq
0\leq E(e^{-\alpha S_n})-E(e^{-\alpha \Sc})\leq \alpha E(\Sc-S_n).
\feq
By Proposition~\ref{expo}, for any $n\geq n_0,$
\beq
0&\leq& \limsup_{n \to\infty}\bigl[E(e^{-\alpha S_n})-E(e^{-\alpha \Sc})\bigr]
\\
&\leq& \limsup_{n \to\infty}\bigl[E(e^{-\alpha S_n})-E(e^{-\alpha \Sc})\bigr]\leq \alpha\Bigl(\frac{i_0}{1-c}-\frac{i_0}{1-d}\Bigr),
\feq
which yields the result since the parameters $c$ and $d$ can be chosen arbitrarily close to $\lambda.$
\par
Assume now that $\lambda=1.$ Without loss of generality we can assume that $c>1.$ Let $A_c=\bigl\{\lim_{k\to\infty} Z^{(c)}_k=0\bigr\}$
be the event of extinction for the branching process  $\bigl(Z^{(c)}_k\bigr)_{k\in\zz_+}.$ It follows from Proposition~\ref{coupling1}
that for any integer $m\geq i_0,$
\beqn
\label{aam}
P(S_n>m)\leq P\bigl(S^{(c)}>m;A_c\bigr)+P\bigl(\ol A_c\bigr)=
\sum_{j>m}\frac{i_0}{j}\frac{(c j)^{j-i_o}}{(j-i_0)!}e^{-c j}+P\bigl(\ol A_c\bigr),
\feqn
where $\ol A_c$ is the complement event $A_c=\bigl\{\lim_{k\to\infty} Z^{(c)}_k=+\infty\bigr\},$ and the second identity is an instance of
the Otter-Dwass theorem for supercritical branching process, see Theorem~1 in \cite{dwass}. By letting first $n$ got to infinity and then
$c$ approach $\lambda=1,$ we obtain
\beqn
\label{lsu}
\limsup_{n\to\infty}P(S_n>m)\leq \sum_{j>m}\frac{i_0}{j}\frac{(j)^{j-i_o}}{(j-i_0)!}e^{-j}.
\feqn
On the other hand, Fatou's lemma implies that for any $J\in\nn,$
\beqn
\label{aam1}
\liminf_{n\to\infty} P(S_n>m)&\geq& \liminf_{n\to\infty} P\Bigl(\sum_{k=0}^J \xn_k>m\Bigr)\geq
P\Bigl(\sum_{k=0}^J Z^{(1)}_k>m\Bigr).
\feqn
Letting first $n$ and then $J$ go to infinity, we obtain that
\beqn
\label{m3}
\liminf_{n\to\infty}P(S_n>m)\geq P\bigl(S^{(1)}>m\bigr)=\sum_{j>m}\frac{i_0}{j}\frac{(j)^{j-i_o}}{(j-i_0)!}e^{-j}.
\feqn
Combining this estimate with \eqref{lsu} completes the proof of part (ii) for $\lambda=1.$
\\
$\mbox{}$
\\
(iii) The proof is similar to that of part (ii) for $\lambda=1.$ More precisely, \eqref{aam} and \eqref{aam1} with $Z^{(1)}$ replaced
by $Z^{(\lambda)}$ remain correct for any $m>i_0,$ $c>\lambda,$ and $J\in\nn.$ By letting first $n$ go to infinity and then $c$
approach $\lambda$ in \eqref{aam}, we obtain the following counterpart of \eqref{lsu}:
\beqn
\label{lsu1}
\limsup_{n\to\infty}P(S_n>m)\leq \sum_{j>m}\frac{i_0}{j}\frac{(j\lambda)^{j-i_o}}{(j-i_0)!}e^{-j\lambda}+1-\alpha_\lambda.
\feqn
By letting first $n$ and then $J$ go to infinity in \eqref{m3}, we obtain from the Otter-Dwass theorem that
\beq
\liminf_{n\to\infty}P(S_n>m)\geq P\bigl(S^{(\lambda)}>m\bigr)=\sum_{j>m}\frac{i_0}{j}\frac{(j\lambda)^{j-i_o}}{(j-i_0)!}e^{-j\lambda}+1-\alpha_\lambda.
\feq
Combining this estimate with \eqref{lsu1} completes the proof of part (iii) of the theorem.
\\
$\mbox{}$
\\
(iv) Using the Lagrange form of the second order remainder in Taylor's series for the function $f(p)=(1-p)^i$ around zero, we obtain that
for all $n\in\nn$ and $i\in\nn,$
\beq
q_n^i=(1-p_n)^i=1-ip_n+\frac{i(i-1)}{2}p_n^2(1-\beta_{n,i})^{i-2}
\feq
for some $\beta_{n,i}\in (0,p_n).$ Therefore,
\beqn
\nonumber
E(\xn_{k+1})&=& E\bigl[(n-\xn_k)\bigl(1-q_n^{\xn_k}\bigr)\bigr]
\\
\nonumber
&=&
np_nE(\xn_k) +\frac{np_n^2}{2}E\bigl[\xn_k(\xn_k-1)\bigr]
\\
\label{sume1}
&&
\qquad
-p_nE[(\xn_k)^2]+\frac{p_n^2}{2}E\bigl[(\xn_k)^2(\xn_k-1)(1-\beta_{n,\xn_k})^{\xn_k-2}\bigr].
\feqn
It follows from the coupling construction given by Proposition~\ref{coupling1} that
\beq
\sum_{k=0}^\infty E\bigl[(\xn_k)^3\bigr]<\sum_{k=0}^\infty E\bigl[(Z^{(c)}_k)^3\bigr]<\infty
\feq
for any $c\in (\lambda,1)$ and $n\geq n_0.$ Hence, summing up both the sides of \eqref{sume1} from $k=1$ to infinity, we obtain
\beq
E(S_n)&=&\frac{i_0}{1-np_n}+\frac{np_n^2}{2(1-np_n)}E\Bigl[\sum_{k=0}^\infty \xn_k(\xn_k-1)\Bigr]
\\
&&
\qquad
-\frac{p_n}{1-np_n}E\Bigl[\sum_{k=0}^\infty (\xn_k)^2\Bigr]+o(1/n).
\feq
Therefore, by the dominated convergence theorem (here we use again Proposition~\ref{coupling1} which shows that $\xn_k$
is stochastically dominated by $Z^{(c)}_k$),
\beqn
\nonumber
\lim_{n\to\infty}n\Bigl[\frac{i_0}{1-c}-E(S_n)\Bigr]&=&-\frac{\lambda^2}{2(1-\lambda)}E\Bigl[\sum_{k=0}^\infty (Z^{(\lambda)}_k)^2-\frac{i_0}{1-\lambda}\Bigr]+\frac{\lambda}{1-\lambda}E\Bigl[\sum_{k=0}^\infty (Z^{(\lambda)}_k)^2\Bigr]
\\
\label{moments}
&=&
\frac{i_0\lambda^2}{2(1-\lambda)^2}+\frac{2\lambda-\lambda^2}{2(1-\lambda)}E\Bigl[\sum_{k=0}^\infty (Z^{(\lambda)}_k)^2\Bigr].
\feqn
Furthermore,
\beq
E\Bigl[\sum_{k=0}^\infty (Z^{(\lambda)}_k)^2\Bigr]=i_0^2+E\Bigl[\sum_{k=1}^\infty \bigl[\lambda Z^{(\lambda)}_{k-1}
+ \lambda^2 (Z^{(\lambda)}_{k-1})^2\bigr]\Bigr],
\feq
which implies that (the sum is finite, for instance, because it is dominated by a finite second moment of the Borel-Tanner distribution of $S^{(\lambda)}$)
\beq
E\Bigl[\sum_{k=0}^\infty (Z^{(\lambda)}_k)^2\Bigr]=\frac{\lambda i_0}{(1-\lambda)(1-\lambda^2)}+\frac{i_0^2}{1-\lambda^2}.
\feq
Substituting this identity into \eqref{moments} yields the result in part (iv) of the theorem.
\end{proof}
\section{Spread to a non-zero fraction of the network}
\label{fixp}
In this section we are concerned with the question whether an avalanche initiated by just a few excited nodes has a substantial potential to spread to
a large fraction of the network. The results for the supercritical regime are stated
in Theorems~\ref{main1} and~\ref{newt}, whereas the critical and subcritical regimes are addressed in Theorem~\ref{lem3a1}.
\par
First we will consider a single network with given parameters $n$ and $p.$  For an arbitrary real number $J>0$ we define
\beqn
\label{pni}
h_J(i)=P\Bigl(\max_{k\in\zz_+} X_k \geq J\Bigl|X_0=i\Bigr)=P\bigl(X_{T_J} \geq J|X_0=i\bigr),
\feqn
where
\beqn
\label{tiveps}
T_J=\min\bigl\{k\in\nn:X_k=0~\mbox{or}~X_k \geq J\bigr\}.
\feqn
We begin with the supercrtical regime, namely the case when $np>1.$ Consequently, without loss of generality we can assume that $d>1$ in Condition~\ref{assume5}.
Let $\veps_d>0$ be a positive constant such that
\beqn
\label{epss}
\frac{1-e^{-d\veps}}{\veps}(1-\veps)>1 \qquad \forall~\veps\in (0,\veps_d].
\feqn
Note that such $\veps_d$ exists because $\lim_{\veps\to 0} \frac{1-e^{-d\veps}}{\veps}(1-\veps)=d>1.$
Further, for $\mu\in (0,\infty)\backslash\{1\},$ let $\alpha_\mu\neq 1$ denote the unique in $(0,\infty)\backslash\{1\}$ solution of the fixed point equation
\beqn
\label{alpha}
\alpha_\mu=e^{-(1-\alpha_\mu)\mu},\qquad \alpha_\mu\neq 1.
\feqn
We remark that $\alpha_\mu<\mu\alpha_\mu<1$ if $\mu>1,$ and $\alpha_\mu>\mu\alpha_\mu>1$ if $\mu\in (0,1).$
This is true because \eqref{alpha} is equivalent to
$\mu\alpha_\mu e^{-\mu\alpha_\mu}=\mu e^{-\mu},$ and the function $f(x)=xe^{-x}$ has a unique local maximum at $x=1.$
\par
Observe that the right-hand side of \eqref{alpha} is $E\Bigl(\alpha_\mu^{Z^{(\mu)}_{k+1}}\Bigl|Z^{(\mu)}_k=1\Bigl),$ and hence
$\Bigl(\alpha_\mu^{Z^{(\mu)}_k}\Bigr)_{k\in\zz_+}$ is a martingale with respect to its natural filtration.
If $\mu>1,$ then $\alpha_\mu$ is the extinction probability of the supercritical Poisson branching process $Z^{(\mu)}.$ Furthermore, if $\mu<1$
then $1/\alpha_\mu$ is the extinction probability of the dual supercritical process $Z^{(\mu\alpha_\mu)}.$
\par
The main technical result of this section is the following proposition.
\begin{proposition}
\label{lem2}
Suppose that Condition~\ref{assume5} is satisfied with $d>1.$ Let $\veps_d$ be a constant that satisfies condition \eqref{epss}. Then
\begin{itemize}
\item[(a)]
There is a constant $\rho=\rho(\veps_d)\in (0,1)$ such that $E\bigl(\rho^{X_{k+1}}|X_k=i\bigr)\leq \rho^i$ for all $i\in [1,n\veps_d)$ and $k\in \zz_+.$ Furthermore, $\rho(\veps_d)$ can be chosen in such a way that
\beqn
\label{rhov}
\lim_{\veps_d\to 0} \rho(\veps_d)=\alpha_d.
\feqn
\item[(b)] $E\bigl(\alpha_c^{X_{k+1}}|X_k=i\bigr)\geq \alpha_c^i$ for all integers $i\in [1,n)$ and $k\in \zz_+.$
\end{itemize}
\end{proposition}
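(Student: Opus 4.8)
The plan is to use that both inequalities are one-step estimates on the (time-homogeneous) kernel \eqref{ker}: by the binomial theorem, for any base $r\in(0,1)$ and state $i$,
\[
E\bigl(r^{X_{k+1}}\,|\,X_k=i\bigr)=\sum_{j}\binom{n-i}{j}(1-q^i)^j(q^i)^{n-i-j}r^j=\bigl(1-(1-q^i)(1-r)\bigr)^{n-i},
\]
the probability generating function of a $\mathrm{Binomial}(n-i,1-q^i)$ law at $r$. I would take $r=\alpha_c$ in part~(b) and $r=\rho(\veps_d)$ in part~(a), and in each case analyze the right-hand side as a function of $i$.

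For part~(b) the natural route is stochastic domination by the supercritical branching process $Z^{(c)}$, which satisfies $E\bigl(\alpha_c^{Z^{(c)}_{k+1}}\,|\,Z^{(c)}_k=i\bigr)=\bigl(e^{-c(1-\alpha_c)}\bigr)^i=\alpha_c^i$ by the fixed point equation \eqref{alpha}. Conditionally on $X_k=i$ the variable $X_{k+1}$ is a sum of $n-i$ i.i.d.\ $\mathrm{Bernoulli}(1-q^i)$'s; by \eqref{nol}, $-\log q\le c/(n-i)$, hence $1-q^i\le 1-e^{-ci/(n-i)}$, so each such Bernoulli is stochastically dominated by a $\mathrm{Poisson}\bigl(ci/(n-i)\bigr)$ variable, and summing the standard Bernoulli/Poisson couplings (the one underlying Proposition~\ref{coupling1}) shows that $X_{k+1}\,|\,X_k=i$ is stochastically dominated by a $\mathrm{Poisson}(ci)$ variable. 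Since $s\mapsto\alpha_c^s$ is decreasing, $E\bigl(\alpha_c^{X_{k+1}}\,|\,X_k=i\bigr)\ge e^{-ci(1-\alpha_c)}=\alpha_c^i$, which is (b). (Equivalently one can argue analytically: $-\log q\le c/(n-i)$ gives $q^i\ge e^{-ci/(n-i)}=\alpha_c^{\,i/((n-i)(1-\alpha_c))}$, and Jensen's inequality for the convex map $s\mapsto\alpha_c^s$ with weights $\alpha_c$ and $1-\alpha_c$ yields $\alpha_c+q^i(1-\alpha_c)\ge\alpha_c^{\,i/(n-i)}$, hence $\bigl(\alpha_c+q^i(1-\alpha_c)\bigr)^{n-i}\ge\alpha_c^i$.)

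Part~(a) rests on the opposite bound for $1-q^i$. Since $pn\ge d$ we have $q=1-p\le 1-d/n\le e^{-d/n}$, so $q^i\le e^{-di/n}$ and $1-q^i\ge 1-e^{-di/n}$; therefore
\[
E\bigl(\rho^{X_{k+1}}\,|\,X_k=i\bigr)=\bigl(1-(1-q^i)(1-\rho)\bigr)^{n-i}\le\bigl(1-(1-e^{-di/n})(1-\rho)\bigr)^{n-i}\le\exp\bigl(-(n-i)(1-e^{-di/n})(1-\rho)\bigr).
\]
Writing $\veps=i/n$, a direct computation gives $(n-i)(1-e^{-di/n})=i\,M(\veps)$ with $M(\veps):=\frac{(1-\veps)(1-e^{-d\veps})}{\veps}$, and $M$ is continuous and decreasing on $(0,1]$ with $M(\veps)\to d$ as $\veps\to0$, while \eqref{epss} says $M(\veps)>1$ for $\veps\in(0,\veps_d]$; in particular $M(\veps_d)\in(1,d)$. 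I would then set $\rho=\rho(\veps_d):=\alpha_{M(\veps_d)}$, i.e.\ the unique solution in $(\alpha_d,1)$ of $-\log\rho=M(\veps_d)(1-\rho)$, which is well defined because $\mu\mapsto\alpha_\mu$ is a decreasing continuous function on $(1,\infty)$ with $\alpha_d$ corresponding to $\mu=d$. For an integer $i$ with $1\le i<n\veps_d$ one has $\veps=i/n<\veps_d$, so by monotonicity $(1-\rho)M(\veps)\ge(1-\rho)M(\veps_d)=-\log\rho$, and the display above is then at most $e^{i\log\rho}=\rho^i$. Finally $M(\veps_d)\to d$ as $\veps_d\to0$ and $\mu\mapsto\alpha_\mu$ is continuous, so $\rho(\veps_d)\to\alpha_d$, which is \eqref{rhov}.

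The substance is entirely in part~(a); part~(b) is routine once the binomial/Poisson coupling of Proposition~\ref{coupling1} is available. For (a) the point one must get right is that the correct estimate is $1-q^i\ge 1-e^{-di/n}$ rather than the first order bound $1-q^i\gtrsim di/n$, since $(n-i)(1-e^{-di/n})$ factors exactly through the quantity $M(\veps)$ occurring in \eqref{epss}, and that the choice $\rho(\veps_d)=\alpha_{M(\veps_d)}$ is precisely the one making $\rho^{X_k}$ a supermartingale on the band $[1,n\veps_d)$ while degenerating to the branching extinction probability $\alpha_d$ as the band shrinks. The remaining care is to handle all $i<n\veps_d$ uniformly (done via monotonicity of $M$) and to check $M(\veps_d)\in(1,d)$ so that $\rho(\veps_d)=\alpha_{M(\veps_d)}\in(\alpha_d,1)\subset(0,1)$; the hypotheses $d>1$ and condition \eqref{epss} enter exactly at this last point.
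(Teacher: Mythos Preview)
Your proof is correct and follows essentially the same approach as the paper's. For part~(b) you invoke the same Bernoulli/Poisson coupling of Proposition~\ref{coupling1} and the fixed-point identity for $\alpha_c$; for part~(a) you arrive at the identical choice $\rho(\veps_d)=\alpha_{M(\veps_d)}$ with $M(\veps_d)=\frac{(1-\veps_d)(1-e^{-d\veps_d})}{\veps_d}$, the only cosmetic difference being that the paper first linearizes $e^{-d\veps}\le 1-\gamma\veps$ via the secant $\gamma=\frac{1-e^{-d\veps_d}}{\veps_d}$ and then separately bounds $1-\veps\ge 1-\veps_d$, whereas you handle both factors at once through the monotonicity of $M(\veps)$.
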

\begin{proof}
$\mbox{}$
\\
(a) First, we choose a real constant $\gamma>0$ in such a way that
\beq
q^i\leq \Bigl(1-\frac{d}{n}\Bigr)^i\leq 1-\frac{\gamma i}{n}
\feq
for any integer $i\in [1,n\veps_d).$  Since
\beq
q^i\leq \Bigl(1-\frac{d}{n}\Bigr)^i=\Bigl[\Bigl(1-\frac{d}{n}\Bigr)^n\Bigr]^{\frac{i}{n}}\leq e^{-\frac{di}{n}},
\feq
it suffices to find $\gamma >0$ such that
\beq
e^{-dx}\leq 1-\gamma x
\feq
for any $x\in [0,\veps_d).$ To this end, for a fixed $\gamma>0$ let $f_\gamma(x)=1-\gamma x-e^{-dx}.$
Then $f_\gamma(0)=0$ and
\beq
f_\gamma'(x)=-\gamma+de^{-dx} \qquad \mbox{\rm and}\qquad f_\gamma''(x)=-d^2e^{-dx}<0.
\feq
Thus $f_\gamma(x)>0$ for any $x\in (0,\veps_d)$ provided that
\beq
f_\gamma'(0)=-\gamma+d>0\qquad \mbox{\rm and}\qquad f_\gamma(\veps_d)=1-\gamma\veps_d -e^{-d\veps_d}\geq 0.
\feq
Since $\frac{1-e^{-d\veps_d}}{\veps_d}<d,$ we can put
\beqn
\label{gamma}
\gamma=\frac{1-e^{-d\veps_d}}{\veps_d}.
\feqn
Then for any $\rho\in (0,1)$ and integer $i\in [1,n\veps_d),$
\beqn
\nonumber
E\bigl(\rho^{X_{k+1}}\bigr|X_k=i\bigr)&=& \bigl(\rho (1-q^i)+1\cdot q^i\bigr)^{n-i}=\bigl(\rho +(1-\rho)\cdot q^i\bigr)^{n-i}
\\
\nonumber
&\leq&
\Bigl(\rho +(1-\rho)\cdot \Bigl(1-\frac{\gamma i}{n}\Bigr)\Bigr)^{n-i}=\Bigl(1 -(1-\rho)\frac{\gamma i}{n}\Bigr)^{n-i}
\\
\label{rhog}
&\leq& e^{-(1-\rho)\gamma(1-\veps_d) i}.
\feqn
Thus, we can set $\rho$ to be the unique solution of the fixed point equation
\beqn
\label{rho}
\rho= e^{-(1-\rho)\gamma (1-\veps_d)}.
\feqn
Note that $\rho=\alpha_\mu$ with $\mu=\gamma (1-\veps_d)=\frac{1-e^{-d\veps_d}}{\veps_d} (1-\veps_d).$ The limit result in \eqref{rhov}
follows immediately from \eqref{rho} and \eqref{gamma}.
\\
$\mbox{}$
\\
(b) By Proposition~\ref{coupling1} the process $Z^{(c)}$ stochastically dominates $X.$ Therefore, taking in account that $\alpha_c<1.$ we obtain
\beq
E\bigl(\alpha_c^{X_{k+1}}|X_k=i\bigr)\geq E\bigl(\alpha_c^{Z^{(c)}_{k+1}}|Z^{(c)}_k=i\bigr)=\alpha_c^i.
\feq
The proof of the proposition is complete.
\end{proof}
Doob's optional stopping theorem implies that for any $\veps\in (0,\veps_d)$ and $i\in [1,n\veps),$
\beq
\rho^i\geq E\bigl(\rho^{X_{T_{n\veps}}}|X_0=i\bigr)\geq h_{n\veps}(i)\rho^n+\bigl(1-h_{n\veps}(i)\bigr)
\feq
and
\beq
\alpha_c^i\leq E\bigl(\alpha_c^{X_{T_{n\veps}}}|X_0=i\bigr)\leq h_{n\veps} (i)\alpha_c^{n\veps}+\bigl(1-h_{n\veps} (i)\bigr).
\feq
This yields the following result:
\begin{theorem}
\label{main1}
Suppose that Condition~\ref{assume5} is satisfied with $d>1.$  Then
\beq
\frac{1-\rho^i}{1-\rho^n}\leq h_{n\veps}(i)\leq \frac{1-\alpha_c^i}{1-\alpha_c^{n \veps}},
\feq
for all $\veps\in (0,\veps_d)$ and $i\in [1, n \veps).$
\end{theorem}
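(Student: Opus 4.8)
The plan is to derive both inequalities from Doob's optional stopping theorem applied at the stopping time $T_{n\veps}$ from \eqref{tiveps}, using the two one-step estimates recorded in Proposition~\ref{lem2}. For the lower bound the relevant fact is part~(a): the constant $\rho=\rho(\veps_d)\in(0,1)$ satisfies $E(\rho^{X_{k+1}}\mid X_k=i)\le\rho^i$ whenever $i\in[1,n\veps_d)$; for the upper bound it is part~(b): $E(\alpha_c^{X_{k+1}}\mid X_k=i)\ge\alpha_c^i$ whenever $i\in[1,n)$, where $\alpha_c\in(0,1)$ since $c\ge d>1$. The hypothesis $\veps<\veps_d$ is used precisely so that $[1,n\veps)\subset[1,n\veps_d)\subset[1,n)$, which guarantees that along any trajectory started from $i\in[1,n\veps)$ every state visited strictly before $T_{n\veps}$ lies in the domain where both estimates hold.

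First I would fix $\veps\in(0,\veps_d)$ and an integer $i\in[1,n\veps)$ and note that $T_{n\veps}$ is almost surely finite, since $T_{n\veps}\le\inf\{k:X_k=0\}=T$ and $P(T<\infty)=1$ by \eqref{ek}. Next I would introduce the stopped processes $M_k=\rho^{X_{k\wedge T_{n\veps}}}$ and $N_k=\alpha_c^{X_{k\wedge T_{n\veps}}}$, both adapted to $(\calf_k)$ and bounded with values in $(0,1]$. On the event $\{T_{n\veps}\le k\}$ one has $M_{k+1}=M_k$ and $N_{k+1}=N_k$; on the $\calf_k$-measurable event $\{T_{n\veps}>k\}$ one has $X_k\in[1,n\veps)$, so Proposition~\ref{lem2}(a) gives $E(M_{k+1}\mid\calf_k)\le M_k$ and Proposition~\ref{lem2}(b) gives $E(N_{k+1}\mid\calf_k)\ge N_k$. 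Hence $M_k$ is a supermartingale and $N_k$ a submartingale, and since $T_{n\veps}$ is finite almost surely and both processes are bounded, optional stopping yields $\rho^i=E(M_0)\ge E\bigl(\rho^{X_{T_{n\veps}}}\mid X_0=i\bigr)$ and $\alpha_c^i=E(N_0)\le E\bigl(\alpha_c^{X_{T_{n\veps}}}\mid X_0=i\bigr)$.

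It remains to evaluate these expectations at the stopping time, which is where $h_{n\veps}$ enters: $X_{T_{n\veps}}=0$ with probability $1-h_{n\veps}(i)$ and $X_{T_{n\veps}}\ge n\veps$ with probability $h_{n\veps}(i)$. Since $0<\rho,\alpha_c<1$ and $X_k\le n$, on the excursion event I would bound $\rho^{X_{T_{n\veps}}}\ge\rho^n$ from below and $\alpha_c^{X_{T_{n\veps}}}\le\alpha_c^{n\veps}$ from above, while on the extinction event both quantities equal $1$. This produces $\rho^i\ge h_{n\veps}(i)\rho^n+\bigl(1-h_{n\veps}(i)\bigr)$ and $\alpha_c^i\le h_{n\veps}(i)\alpha_c^{n\veps}+\bigl(1-h_{n\veps}(i)\bigr)$, and rearranging each of these linear inequalities in $h_{n\veps}(i)$ gives $\frac{1-\rho^i}{1-\rho^n}\le h_{n\veps}(i)\le\frac{1-\alpha_c^i}{1-\alpha_c^{n\veps}}$, as claimed. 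There is no genuine obstacle here; the only point needing care is the justification of optional stopping — almost sure finiteness of $T_{n\veps}$ together with boundedness of $M_k$ and $N_k$ — and the observation that the stopped inequalities invoke Proposition~\ref{lem2} only on its stated range, which is exactly what the condition $\veps<\veps_d$ secures.
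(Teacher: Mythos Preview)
Your proposal is correct and follows essentially the same route as the paper: apply Doob's optional stopping theorem at $T_{n\veps}$, using Proposition~\ref{lem2}(a) to get the supermartingale inequality $\rho^i\ge E\bigl(\rho^{X_{T_{n\veps}}}\mid X_0=i\bigr)\ge h_{n\veps}(i)\rho^n+(1-h_{n\veps}(i))$ and Proposition~\ref{lem2}(b) for the submartingale inequality $\alpha_c^i\le E\bigl(\alpha_c^{X_{T_{n\veps}}}\mid X_0=i\bigr)\le h_{n\veps}(i)\alpha_c^{n\veps}+(1-h_{n\veps}(i))$, then rearrange. The paper's argument is stated in one line just before the theorem; you have simply made explicit the technical justifications (finiteness of $T_{n\veps}$, boundedness of the stopped processes, and the range condition ensured by $\veps<\veps_d$) that the paper leaves implicit.
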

We next consider the asymptotic behavior of the avalanche model when the network size approaches infinity. Similarly to \eqref{pni} and \eqref{tiveps},
for any $J>0$ and $n\in\nn$ we define
\beqn
\label{pnin}
h^{(n)}_J(i)=P\Bigl(\max_{k\in\zz_+} X^{(n)}_k \geq J\Bigl|X^{(n)}_0=i\Bigr)=P\bigl(X^{(n)}_{T^{(n)}_J} \geq J\bigl|X^{(n)}_0=i\bigr),
\feqn
where
\beqn
\label{tiveps1}
T^{(n)}_J=\min\bigl\{k\in\nn:X^{(n)}_k=0~\mbox{or}~X^{(n)}_k \geq J \bigr\}.
\feqn
Suppose that Assumption~\ref{assume7} (and consequently Condition~\ref{cdla}) hold with $\lambda>1.$ Theorem~\ref{main1} then implies that for any constants $i\in \nn,$  $\veps\in (0,\veps_d),$ and a function $\psi:\nn\to\nn$ such that
\beqn
\label{psiz}
\lim_{n\to\infty} \psi(n)=+\infty\qquad \mbox{\rm and}\qquad \limsup_{n\to\infty} \frac{\psi(n)}{n}=0,
\feqn
we have
\beq
1-\rho^i \leq \liminf_{n\to\infty} h^{(n)}_{\psi(n)}(i)\leq \limsup_{n\to\infty} h^{(n)}_{\psi(n)}(i)\leq 1-\alpha_c^i,
\feq
where $\rho=\rho(\veps),$ as defined in the statement of part (a) of Proposition~\ref{lem2}. Because of the second condition in \eqref{psiz} we can chose the constant $\veps>0$ to be as small as we wish. Therefore, by virtue of \eqref{rhov},
\beq
1-\alpha_d^i \leq \liminf_{n\to\infty} h^{(n)}_{\psi(n)}(i)\leq \limsup_{n\to\infty} h^{(n)}_{\psi(n)}(i)\leq 1-\alpha_c^i.
\feq
Since the constants $c$ and $d$ can be chosen arbitrarily close to $\lambda,$ we arrive to
\beqn
\label{hpsi}
\lim_{n\to\infty} h^{(n)}_{\psi(n)}(i)=1-\alpha_\lambda^i.
\feqn
It turns out that condition \eqref{psiz} can be relaxed as follows:
\begin{theorem}
\label{newt}
Suppose that Assumption~\ref{assume7} is satisfied with $\lambda>1.$  Then \eqref{hpsi} holds for any integer $i\in \nn$ and a function $\psi:\nn\to \nn$ such that $\psi(n)< n$ and $\lim_{n\to\infty} \psi(n)=+\infty.$
\end{theorem}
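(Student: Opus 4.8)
The plan is to prove the two bounds $\limsup_{n\to\infty}h^{(n)}_{\psi(n)}(i)\le 1-\alpha_\lambda^i$ and $\liminf_{n\to\infty}h^{(n)}_{\psi(n)}(i)\ge 1-\alpha_\lambda^i$ separately. The first follows at once from monotonicity together with the case of \eqref{hpsi} already established under \eqref{psiz}; the second is the substantive part and, besides those same inputs, will lean on the deterministic approximation of Section~\ref{dapprox}.

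For the upper bound, the key remark is that $J\mapsto h^{(n)}_J(i)$ is non-increasing, since the event $\{\max_k X^{(n)}_k\ge J\}$ shrinks as $J$ grows. Given $\psi$ with $\psi(n)<n$ and $\psi(n)\to\infty$, put $\tilde\psi(n)=\min\{\psi(n),\lceil\log n\rceil\}$; then $\tilde\psi$ satisfies \eqref{psiz} and $\tilde\psi(n)\le\psi(n)$, hence $h^{(n)}_{\psi(n)}(i)\le h^{(n)}_{\tilde\psi(n)}(i)$, and letting $n\to\infty$ while invoking \eqref{hpsi} for $\tilde\psi$ gives $\limsup_{n\to\infty}h^{(n)}_{\psi(n)}(i)\le 1-\alpha_\lambda^i$.

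For the lower bound, first observe that if $\psi(n)\le\lceil\log n\rceil$ for infinitely many $n$, then $\psi$ obeys \eqref{psiz} along that subsequence and \eqref{hpsi} already yields the bound there; so we may assume $L(n):=\lceil\log n\rceil<\psi(n)$ for all large $n$. By the strong Markov property applied at $T^{(n)}_{L(n)}$, $h^{(n)}_{\psi(n)}(i)\ge h^{(n)}_{L(n)}(i)\cdot\inf_{L(n)\le m<\psi(n)}h^{(n)}_{\psi(n)}(m)$, and since $h^{(n)}_{L(n)}(i)\to 1-\alpha_\lambda^i$ by \eqref{hpsi}, it suffices to show $\inf_{L(n)\le m<\psi(n)}h^{(n)}_{\psi(n)}(m)\to 1$. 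I would do this in two stages. Stage~1: fix a small $\veps>0$ (smaller than $\veps_d$ and than the stable point $u^*$ of the deterministic map $u\mapsto(1-u)(1-e^{-\lambda u})$ of Section~\ref{dapprox}); by the supermartingale of Proposition~\ref{lem2}(a) and Doob's optional stopping, exactly as in the derivation of Theorem~\ref{main1}, from any level $m\ge L(n)$ the chain reaches $\lceil\veps n\rceil$ before $0$ with probability at least $1-\rho^{L(n)}\to 1$. Stage~2: started from a level equal to a fixed positive fraction of the network, the rescaled chain $X^{(n)}_k/n$ stays — with probability $1-o(1)$ and uniformly over the $O(\log n)$ steps of a climbing phase — within $o(1)$ of the (locally Lipschitz) orbit of that deterministic map, which increases from $o(1)$ through every level below $u^*$ and so crosses $\psi(n)/n$; consequently the stochastic path crosses $\psi(n)$ with probability $1-o(1)$. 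Combining the two stages gives $\inf_m h^{(n)}_{\psi(n)}(m)\to 1$, hence the lower bound and, together with the upper bound, the equality \eqref{hpsi}.

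The hard part is Stage~2: establishing that the avalanche shadows the deterministic orbit \emph{uniformly} in the starting height $m/n$ — in particular when the jump from below $L(n)$ overshoots and $m$ is already a macroscopic fraction of $n$ — and excluding the possibility that the path leaps over $\psi(n)$ or returns to $0$ before reaching it. The natural route is to compare $X^{(n)}$, while it lies in a window $[\delta n,(1-\delta)n]$ of the state space, with a random walk of strictly positive drift toward the attractor (using Condition~\ref{cdla}, Lemma~\ref{loq}, and the conditional-mean identities behind \eqref{avee}--\eqref{sume}), and to apply Azuma/Hoeffding concentration over the $O(\log n)$ climbing steps, so that the avalanche sweeps monotonically — up to $o(n)$ fluctuations — through every intermediate level, in particular through $\psi(n)$.
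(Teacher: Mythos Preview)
The paper does not actually give a proof of this theorem: it says the argument is ``very similar to that of Theorem~3.8 in \cite{jmb}'' and omits it, adding only the remark that one needs a uniform-in-$n$ estimate on $P\bigl(X^{(n)}_{k+1}=0\,\bigl|\,X^{(n)}_k=m\bigr)$. So there is no in-paper proof to compare to beyond that pointer.

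Your upper bound is clean and correct. Your lower-bound skeleton --- apply the strong Markov property at the first crossing of $L(n)=\lceil\log n\rceil$, then show that from any $m\ge L(n)$ the chain reaches $\psi(n)$ with probability $1-o(1)$ --- is also the right shape, and Stage~1 (climb from $L(n)$ to a fixed fraction $\veps n$ using the supermartingale of Proposition~\ref{lem2}(a)) goes through exactly as you describe.

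The genuine gap is Stage~2. You write that the deterministic orbit ``increases \ldots\ through every level below $u^*$ and so crosses $\psi(n)/n$'', which tacitly assumes $\psi(n)/n<\zeta_\lambda$. But the hypothesis is only $\psi(n)<n$. The orbit of $g_\lambda$ converges to $\zeta_\lambda<\tfrac12$ (Theorem~\ref{cooke}) and is globally bounded by $\chi_\lambda<1$, so once $\limsup_n \psi(n)/n>\chi_\lambda$ the deterministic approximation of Section~\ref{dapprox} cannot carry the chain past $\psi(n)$; a different large-deviation mechanism would be needed. Worse, the structural constraint $X^{(n)}_{k+1}\le n-X^{(n)}_k$ makes levels close to $n$ genuinely inaccessible: for instance $X^{(n)}_k=n-1$ forces $X^{(n)}_{k-1}=1$ and then has one-step probability $p_n^{\,n-1}=(\lambda/n+o(1/n))^{n-1}$, so $h^{(n)}_{n-1}(i)\to 0\neq 1-\alpha_\lambda^i$. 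Note that \cite{jmb} treats a Wright--Fisher-type chain where $X^{(n)}_{k+1}$ is binomial on $\{0,\dots,n\}$ with no such ceiling, so this obstruction is absent there; either the intended statement here carries an extra restriction on $\psi$ (e.g.\ $\limsup_n\psi(n)/n$ below some model-dependent threshold), or the transfer from \cite{jmb} requires an additional argument that neither the paper nor your proposal supplies. In any case, your Stage~2 as written does not cover the full range of $\psi$ claimed.
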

A similar result for a frequency-dependent Wright-Fisher model has been obtained in \cite{jmb}.
The proof of the theorem is very similar to that of Theorem~3.8 in \cite{jmb}, and therefore is omitted.
We remark that the proof requires a uniform in $n$ upper bound estimate on $P\bigl(X^{(n)}_{k+1}=0\bigl|X^{(n)}_k=m\bigr)$ for given $m\in\nn.$
One can use, for instance, the following bound:
\beq
P\bigl(X^{(n)}_{k+1}=0\bigl|X^{(n)}_k=m\bigr)=q^{m(n-m)}\geq \Bigl(1-\frac{c}{n}\Bigr)^{m(n-m)}\geq e^{-2cm}
\feq
for all $n$ large enough.
\par
We now turn to the study of the maximal number of excited sites in the subcritical and critical regimes.
\begin{theorem}
\label{lem3a1}
Let Assumption~\ref{assume7} hold.
\item [(a)] If $\lambda<1,$ then for any integers $i\in\nn $ and $m>i,$
\beqn
\label{m}
\limsup_{n\to\infty} h^{(n)}_m(i)\leq \frac{\alpha_\lambda^i-1}{\alpha_\lambda^m-1}.
\feqn
\item [(b)] If $\lambda=1,$ then for any integers $i\in\nn $ and $m>i,$
\beqn
\label{m1}
\limsup_{n\to\infty} h^{(n)}_m(i)\leq \frac{i}{m}.
\feqn
\end{theorem}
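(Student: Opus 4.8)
The plan is to pass to the limiting Poisson branching process $Z^{(c)}$ by means of the monotone coupling of Proposition~\ref{coupling1}, and then to read off both bounds from a single optional--stopping computation for $Z^{(c)}$, sending the intensity $c$ to $\lambda$ at the very end. Concretely, fix a constant $c>\lambda$; in part (a) we keep $c\in(\lambda,1)$, in part (b) we take $c>1$. Since $np_n\to\lambda<c$, there is $n_0=n_0(c)$ with $np_n\le c$ for all $n\ge n_0$, so the coupling constructed in the proof of Proposition~\ref{coupling1} --- which works started from any common initial state $i$, not just $i_0$ --- gives, for each $n\ge n_0$, a realization of $X^{(n)}$ and $Z^{(c)}$, both started at $i$, with $X^{(n)}_k\le Z^{(c)}_k$ for all $k$ a.s. Hence $\{\max_kX^{(n)}_k\ge m\}\subseteq\{\max_kZ^{(c)}_k\ge m\}$, so with $g^{(c)}_m(i):=P\bigl(\max_kZ^{(c)}_k\ge m\mid Z^{(c)}_0=i\bigr)$ we obtain $h^{(n)}_m(i)\le g^{(c)}_m(i)$ for all $n\ge n_0(c)$, and it remains to bound $g^{(c)}_m(i)$.

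For the branching process, let $\tau=\min\{k\ge 1:Z^{(c)}_k=0~\text{or}~Z^{(c)}_k\ge m\}$; a subcritical Poisson branching process dies out a.s., and a supercritical one either dies out or diverges to $+\infty$ and so reaches level $m$, so $\tau<\infty$ a.s. in either regime. By the observation following \eqref{alpha}, $\bigl(\alpha_c^{Z^{(c)}_k}\bigr)_{k\in\zz_+}$ is a martingale, hence so is the stopped process $\bigl(\alpha_c^{Z^{(c)}_{k\wedge\tau}}\bigr)_k$, with constant mean $\alpha_c^i$. In case (a), $c<1$ forces $\alpha_c>1$; since $Z^{(c)}_{k\wedge\tau}\to Z^{(c)}_\tau$ a.s., Fatou's lemma gives $E\bigl(\alpha_c^{Z^{(c)}_\tau}\mid Z^{(c)}_0=i\bigr)\le\alpha_c^i$, and splitting the expectation over $\{Z^{(c)}_\tau=0\}$, where the integrand equals $1$, and $\{Z^{(c)}_\tau\ge m\}$, where it is at least $\alpha_c^m$, yields $\alpha_c^i\ge 1+g^{(c)}_m(i)(\alpha_c^m-1)$, i.e.
\[
g^{(c)}_m(i)\le\frac{\alpha_c^i-1}{\alpha_c^m-1}.
\]
In case (b), $c>1$ forces $\alpha_c\in(0,1)$, the stopped martingale is bounded in $[0,1]$, so by bounded convergence $E\bigl(\alpha_c^{Z^{(c)}_\tau}\mid Z^{(c)}_0=i\bigr)=\alpha_c^i$ exactly; now the integrand is $1$ on $\{Z^{(c)}_\tau=0\}$ and at most $\alpha_c^m$ on $\{Z^{(c)}_\tau\ge m\}$, so $\alpha_c^i\le 1-g^{(c)}_m(i)(1-\alpha_c^m)$, i.e.
\[
g^{(c)}_m(i)\le\frac{1-\alpha_c^i}{1-\alpha_c^m}.
\]

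Combining the two steps, $\limsup_{n\to\infty}h^{(n)}_m(i)\le\frac{\alpha_c^i-1}{\alpha_c^m-1}$ for every $c\in(\lambda,1)$ in case (a), and $\limsup_{n\to\infty}h^{(n)}_m(i)\le\frac{1-\alpha_c^i}{1-\alpha_c^m}$ for every $c>1$ in case (b). Writing \eqref{alpha} in the equivalent form $f(c\alpha_c)=f(c)$ with $f(x)=xe^{-x}$, and using that $f$ restricts to a continuous strictly monotone bijection of each of $(0,1)$ and $(1,\infty)$ onto $(0,1/e)$, one checks that $c\mapsto\alpha_c$ is continuous, with $\alpha_c\to\alpha_\lambda$ as $c\downarrow\lambda$ when $\lambda<1$ and $\alpha_c\to 1$ as $c\downarrow 1$. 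Letting $c\downarrow\lambda$ in case (a) gives \eqref{m}; letting $c\downarrow 1$ in case (b) and using $\lim_{\alpha\to1}\frac{1-\alpha^i}{1-\alpha^m}=\frac{i}{m}$ gives \eqref{m1}. The argument is essentially mechanical once the martingale $\alpha_c^{Z^{(c)}_k}$ is identified; the only points requiring care are that in case (a) this martingale is unbounded, so only the Fatou inequality is available --- which is enough, since we merely need an upper bound on $g^{(c)}_m$ --- and the continuity of $\alpha_c$ in $c$ down to the critical value $\lambda$. One may alternatively run both parts directly with the bounded process $\bigl(\alpha_c^{X^{(n)}_k}\bigr)_k$, a supermartingale when $\alpha_c>1$ and a submartingale when $\alpha_c<1$ by the one-step Binomial--Poisson stochastic domination behind \eqref{nol}, which dispenses even with the Fatou step since $X^{(n)}_k\le n$.
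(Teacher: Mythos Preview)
Your proof is correct, but it takes a genuinely different route from the paper's. The paper works \emph{directly} with $X^{(n)}$: for part (a) it fixes $d\in(0,\lambda)$ and an auxiliary $\veps>0$, shows via the elementary estimate $q_n^i\le 1-\gamma i/n$ with $\gamma=(1-e^{-d\veps})/\veps$ that $\bigl(\rho^{X^{(n)}_k}\bigr)_k$ with $\rho=\alpha_\gamma>1$ is a supermartingale on $[1,n\veps)$, applies optional stopping to get $h^{(n)}_m(i)\le(\alpha_\gamma^i-1)/(\alpha_\gamma^m-1)$, and then sends first $\veps\to0$ (so $\gamma\to d$) and then $d\uparrow\lambda$. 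Part (b) reuses this inequality and passes to the limit $d\uparrow1$ by L'Hospital. In short, the paper approaches $\lambda$ \emph{from below} via a hand-built supermartingale for the finite chain.

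You instead approach $\lambda$ \emph{from above}: you invoke the monotone coupling of Proposition~\ref{coupling1} with a dominating Poisson branching process $Z^{(c)}$ for $c>\lambda$, transfer the problem to the classical Lindvall-type bound $P(\max_k Z^{(c)}_k\ge m)\le(\alpha_c^i-1)/(\alpha_c^m-1)$ (respectively $(1-\alpha_c^i)/(1-\alpha_c^m)$), and send $c\downarrow\lambda$ (respectively $c\downarrow1$). This is cleaner and more conceptual, since the martingale $\alpha_c^{Z^{(c)}_k}$ and the coupling are already available, and it sidesteps the $\gamma(\veps)$ machinery entirely; the paper even notes in Remark~\ref{maxrem} that the target bounds are precisely the classical branching-process bounds, but chooses not to exploit the coupling here. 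The paper's approach, by contrast, is self-contained in that it never leaves the finite chain, and it parallels the supermartingale construction of Proposition~\ref{lem2} used in the supercritical case.
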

\begin{remark}
\label{maxrem}
Let $M^{(\lambda)}=\max_{k\in\zz_+} Z^{(\lambda)}_k$ and $M_n=\max_{k\in\zz_+} X^{(n)}_k.$
The estimates on the right-hand side of \eqref{m} and \eqref{m1} are classical upper bounds for
$P\bigl(M^{(\lambda)}\geq m\bigl| Z^{(\lambda)}_0=i\bigr)$ \cite{lind}.
Note that the estimates are not trivial in the sense that, in general, $X^{(n)}$ is not dominated by the limiting branching process $Z^{(\lambda)}$
because it is possible that $np_n>\lambda.$ However, since both $M^{(\lambda)}$ and $M_n$ are a-priori finite with probability one,
\eqref{kernelc} implies that $M_n$ converges to $M^{(\lambda)}$ in distribution as $n\to\infty.$
Hence one can expect that the bounds are meaningful for the avalanche model when $n$ is large.
The bound in \eqref{m1} is known to be asymptotically accurate as $m\to\infty,$ namely
$\lim_{m\to\infty} P\bigl(M^{(1)}\geq m\bigl| Z^{(1)}_0=i\bigr)=\frac{i}{m}$ \cite{lind}.
For the subcritical process, Theorem~\^2 in \cite{nerd} suggests that the correct order of
$P\bigl(M^{(\lambda)}\geq m\bigl| Z^{(\lambda)}_0=i\bigr)$ as $m\to\infty$ is $m^{-1}\alpha_\lambda^{-m},$ up to a constant that depends on $i.$
\end{remark}
Before we prove Theorem~\ref{lem3a1} we state a direct consequence for our model of some well-known results for branching processes
in the critical and subcritical regimes. The first part is an implication of a result in \cite{lind} mentioned in Remark~\ref{maxrem},
the second part can be derived from a result of \cite{kbam}, and the third one follows from Theorem~\^2 in \cite{nerd}, all three are based
on a comparison to branching process invoking Proposition~\ref{coupling1}.
We continue to use the notation for maxima of a branching process introduced in the above remark.
\begin{proposition}
\label{maxc}
There exists a sequence of positive constants $\{\veps_m>0:m\in\nn\}$
such that $\lim_{m\to\infty} \veps_m=0$ and the following holds true:
\item [(a)]  If $np=1$ in \eqref{ker}, then
\begin{itemize}
\item [(i)] $P\bigl(\max_{k\in \zz_+} X_k> m)\leq \frac{i_0}{m}\bigl(1+\veps_m\bigr)$ for all integer $m\in (i_0,n).$
\item [(ii)] $E\bigl(\max_{0\leq j\leq k} X_j)\leq \log k\bigl(1+\veps_k\bigr)$ for all $k\in\nn.$
\end{itemize}
\item [(b)] If $c:=np<1$ in \eqref{ker}, then there exists a constant $B=B(c,i_0)>0$ that depends
on $c$ and $i_0$ only (but not on $n$ and $p$) such that
$P\bigl(\max_{k\in \zz_+} X_k> m)\leq \frac{B(c,i_0)}{m\alpha_c^m}\bigl(1+\veps_m\bigr)$ for all integer $m\in (i_0,n).$
\end{proposition}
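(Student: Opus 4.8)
The plan is to obtain all three bounds by transporting to the avalanche chain the corresponding classical asymptotics for the maximum of a Poisson Galton--Watson process, with the monotone coupling of Proposition~\ref{coupling1} as the bridge. First I would observe that when $np\le 1$ the coupling construction behind Proposition~\ref{coupling1} applies with coupling constant taken to be $c:=np$ itself: the construction only requires $-\log q=-\log(1-p)\le c/(n-i)$ for $i\in\{1,\dots,n-1\}$, and with $c=np$ one has $q=1-c/n$, so this holds. The resulting process realizes, on one probability space, the chain $X$ together with a copy of $Z^{(c)}$ started at $i_0$ with $X_k\le Z^{(c)}_k$ for all $k$; hence $\max_{k\in\zz_+}X_k\le M^{(c)}$ and $\max_{0\le j\le k}X_j\le\max_{0\le j\le k}Z^{(c)}_j$ almost surely, and it suffices to prove the stated estimates for $Z^{(c)}$. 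The choice $c=np$ is what makes the hypotheses sharp: when $np=1$ it keeps $Z^{(c)}=Z^{(1)}$ \emph{critical}, and when $np<1$ it keeps it \emph{subcritical} with precisely the right parameter $\alpha_c$; any larger $c$ would make $Z^{(c)}$ supercritical and the domination useless. A further bonus is that $Z^{(c)}$ depends only on $c$ and $i_0$, so every constant produced in this way is automatically a function of $c$ and $i_0$ alone, not of $(n,p)$ separately --- exactly the uniformity asserted in part~(b).

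For part~(a), $c=1$, so $Z^{(1)}$ is critical with unit mean and finite (Poisson) offspring variance. Estimate~(i) then follows from the classical fact recalled in Remark~\ref{maxrem} and proved in \cite{lind}, namely $P\bigl(M^{(1)}\ge m\,\big|\,Z^{(1)}_0=i_0\bigr)=\frac{i_0}{m}\bigl(1+o(1)\bigr)$ as $m\to\infty$; reading off the error term gives a null sequence $\veps^{(1)}_m\downarrow0$ with $P(M^{(1)}>m)\le\frac{i_0}{m}(1+\veps^{(1)}_m)$ for all $m$. Estimate~(ii) is the transport of the known logarithmic growth of the running maximum of a critical branching process, $E\bigl(\max_{0\le j\le k}Z^{(1)}_j\bigr)=\log k\,(1+o(1))$ as $k\to\infty$ \cite{kbam}, which gives a null sequence $\veps^{(2)}_k\downarrow0$ with $E\bigl(\max_{0\le j\le k}Z^{(1)}_j\bigr)\le\log k\,(1+\veps^{(2)}_k)$.

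For part~(b), $c<1$, so $Z^{(c)}$ is subcritical and $\alpha_c$, the root in $(1,\infty)$ of $\alpha=e^{-(1-\alpha)c}$, is well defined and exceeds $1$. Here I would invoke Theorem~2 of \cite{nerd}, which gives the sharp order of the lifetime maximum of a subcritical branching process: $P\bigl(M^{(c)}\ge m\,\big|\,Z^{(c)}_0=i_0\bigr)=\bigl(B(c,i_0)+o(1)\bigr)\,m^{-1}\alpha_c^{-m}$ as $m\to\infty$, with $B(c,i_0)$ depending on $c$ and $i_0$ only. This produces $\veps^{(3)}_m\downarrow0$ with $P(M^{(c)}>m)\le\frac{B(c,i_0)}{m\,\alpha_c^m}(1+\veps^{(3)}_m)$. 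Finally, setting $\veps_m:=\max\{\veps^{(1)}_m,\veps^{(2)}_m,\veps^{(3)}_m\}$ gives a single null sequence valid in all three estimates, and the proof is complete.

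Honestly there is no deep obstacle here --- the proposition is the shadow on the avalanche model of three standard branching-process facts. The only points that need care are the opening reduction (verifying that the coupling of Proposition~\ref{coupling1}, nominally stated for a family satisfying Assumption~\ref{assume7}, does apply to the single fixed network with $np\le 1$, which it does on taking the coupling constant equal to $np$) and the observation that the constants coming out of \cite{lind,kbam,nerd} can indeed be expressed purely in terms of $c$ and $i_0$.
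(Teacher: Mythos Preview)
Your proposal is correct and follows exactly the route the paper takes: transfer the three classical branching-process estimates of \cite{lind}, \cite{kbam}, and \cite{nerd} to the avalanche chain via the monotone coupling of Proposition~\ref{coupling1}, applied with coupling constant $c=np$. The paper states this in a single sentence without details; your write-up correctly identifies the one point that needs care, namely that the coupling construction (nominally phrased for a family under Assumption~\ref{assume7}) applies verbatim to a single network once one takes $c=np$, and that this choice keeps the dominating branching process critical or subcritical as required.
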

We remark that we chose the sequence $\veps_m$ in the statement of the proposition to be the same in all three cases exclusively for
a notational convenience.
\par
We now return to Theorem~\ref{lem3a1}.
\begin{proof}[Proof of Theorem~\ref{lem3a1}]
$\mbox{}$
\\
(a) Let $n_0\in\nn$ and $d\in (0,\lambda)$ satisfy Condition~\ref{cdla}. Fix any $\veps>0$ and, similarly to \eqref{gamma}, let
\beqn
\label{gamma1}
\gamma=\gamma(\veps)=\frac{1-e^{-d\veps}}{\veps}.
\feqn
Notice that $\gamma<d<1.$ Similarly, to \eqref{rhog}, for any $n\geq n_0,$ $\rho>1$ and integer $i\in [1,n\veps),$
\beq
E\bigl(\rho^{X^{(n)}_{k+1}}\bigr|X^{(n)}_k=i\bigr)\leq
\Bigl(1 +(\rho-1)\frac{\gamma i}{n}\Bigr)^{n-i}\leq e^{(\rho-1)\gamma i}.
\feq
Recall \eqref{alpha} and set $\rho=\alpha_\gamma.$
Thus $E\bigl(\rho^{X^{(n)}_{k+1}}\bigr|X^{(n)}_k=i\bigr)\leq \rho^i$ for any $n\geq n_0$ and $\in [1,n\veps).$ Doob's optional theorem implies that
\beq
\rho^i\geq E\Bigl(\rho^{X^{(n)}_{T^{(n)}_m}}\Bigl|X^{(n)}_0=i\Bigr)\geq h^{(n)}_m(i)\rho^m+\bigl(1-h^{(n)}_m(i)\bigr).
\feq
Therefore,
\beq
h^{(n)}_m(i)\leq \frac{\rho^i-1}{\rho^m-1}=\frac{\alpha_\gamma^i-1}{\alpha_\gamma^m-1}\qquad \forall~n\geq n_0.
\feq
By taking $\veps$ to zero in \eqref{gamma1} we obtain
\beqn
\label{hc}
\limsup_{n\to\infty} h^{(n)}_m(i)\leq \frac{\alpha_d^i-1}{\alpha_d^m-1}.
\feqn
Since in this argument $d$ is an arbitrary number in $(0,\lambda),$ the result of part (a) follows by taking $d$ to $\lambda$ in the above inequality.
\\
$\mbox{}$
\\
(b) Observe that \eqref{hc} is still true for $\lambda=1$ and any $d\in (0,1).$ Furthermore, $\lim_{d\to 1} \alpha_d=1,$ and hence, by the L'Hospital rule,
\beq
\limsup_{n\to\infty} h^{(n)}_m(i)\leq \lim_{\alpha\to 1}\frac{\alpha^i-1}{\alpha^m-1}=\frac{i}{m}.
\feq
The proof of the proposition is complete.
\end{proof}
\section{Duration of the avalanche}
\label{dtime}
The goal of this section is to evaluate distribution tails of the avalanche's duration. The expected value of the duration
is discussed in some detail elsewhere \cite{longini, duration} using a mixture of numerical and computational approaches.
The main result here is Theorem~\ref{main7} which provides estimates for a single network.
Some consequences for a network ensemble satisfying Assumption~\ref{assume7} are drawn in Corollaries~\ref{th7} and~\ref{tc7} at the end of the section.
The basic idea of the proofs is to compare the avalanche model to a branching process during a time frame which on one hand is large enough for
an asymptotic pattern to emerge and, on the other hand, is sufficiently small so that with an asymptotically overwhelming probability,
the paths of the avalanche Markov chain $X$ and a coupled branching process wouldn't diverge until its end.
The proofs rely on asymptotically tight estimates for the branching process given in \cite{agresti74}.
\par
For $c\in (0,1)$ let
\beqn
\label{s74}
s(s)=\frac{2-c}{c} \qquad \mbox{and} \qquad r(c)=\frac{ce^{-c}}{e^{-c}-(1-c)}.
\feqn
Recall $T$ from \eqref{dusize} and $\alpha_c$ from \eqref{alpha}.
\begin{theorem}
\label{main7}
Consider an avalanche process $(X_k)_{k\in\zz_+}$ with $X_0=i_0.$  Let $c=np.$
\item [(i)] If $c>1,$ then there exist constants $\theta >0$ and $K >0$ such that for any pair of constants $x>0$ and $m\in\nn$
which satisfies the condition $xc^m<n,$
\beq
\Bigl(\frac{\alpha_c s_1(1-c^m\alpha_c^m)}{s_1-c^m\alpha_c^m}\Bigr)^{i_0}\leq
P(T\leq m)\leq \Bigl(\frac{\alpha_c r_1 (1-c^m\alpha_c^m)}{r_1-c^m\alpha_c^m}\Bigr)^{i_0}+\frac{3c^{\frac{3(m+1)}{2}}mx^{3/2}}{2n}+K^{i_0}e^{-\theta x},
\feq
where $s_1=s(\alpha_cc)$ and $r_1=r(\alpha_cc).$
\item [(ii)] If $c<1,$ then for any pair of constants $m,J\in\nn$  such that $m<n$ and $i_0<J<n,$
\beq
\Bigl(\frac{s_2(1-c^m)}{s_2-c^m}\Bigr)^{i_0}\leq
P(T\leq m)\leq \Bigl(\frac{r_2(1-c^m)}{r_2-c^m}\Bigr)^{i_0}+\frac{3cmJ^2}{2n}+\frac{\alpha_c^{i_0}-1}{\alpha_c^J-1},
\feq
where $s_2=s(c)$ and $r_2=r(c).$
\item [(iii)] If $c=1,$ then for any $m\in\nn$  such that $m<n,$
\beq
\Bigl(\frac{m}{m+2}\Bigr)^{i_0}\leq
P(T\leq m)\leq \Bigl(\frac{m}{m+e-1}\Bigr)^{i_0}+\frac{3}{2}\Bigl(\frac{3mi_0^2}{n}\Bigr)^{1/3}.
\feq
\end{theorem}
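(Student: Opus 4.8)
The plan is to derive all six inequalities by comparison with the Poisson Galton--Watson process $Z^{(c)}$ of Section~\ref{bpapr}: the monotone coupling of Proposition~\ref{coupling1} produces the lower bounds, a refinement of the binomial--Poisson coupling underlying it produces the upper bounds, and the explicit constants $s(\cdot),r(\cdot)$ enter through the classical two-sided estimates of \cite{agresti74} for the extinction-time generating function $\phi_m:=P\bigl(Z^{(c)}_m=0\,\big|\,Z^{(c)}_0=1\bigr)$. These rest on the recursion $1/(1-\phi_m)=1/\bigl(c(1-\phi_{m-1})\bigr)+g\bigl(c(1-\phi_{m-1})\bigr)$ with $g(y)=1/(1-e^{-y})-1/y$ increasing and $g(0^+)=\tfrac12$, which after iteration gives the bounds with $s(c),r(c)$ (and $m/(m+2)$, $m/(m+e-1)$ at $c=1$). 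In the supercritical case these are applied not to $Z^{(c)}$ but to its \emph{dual} process, i.e.\ $Z^{(c)}$ conditioned on extinction, which is again a Poisson Galton--Watson process but with subcritical mean $\alpha_cc<1$; the identity $\phi_m=\alpha_c\widetilde\phi_m$ (with $\widetilde\phi_m$ the analogous quantity for the dual) is exactly what converts the subcritical estimates into the leading terms $\bigl(\alpha_c s(\alpha_cc)(\cdots)\bigr)^{i_0}$ and $\bigl(\alpha_c r(\alpha_cc)(\cdots)\bigr)^{i_0}$ of part~(i).

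\emph{Lower bounds.} By Proposition~\ref{coupling1} one may realize $X_k\le Z^{(c)}_k$ for all $k$, whence $\{Z^{(c)}_m=0\}\subset\{X_m=0\}$ and, by independence of the $i_0$ ancestral subtrees, $P(T\le m)\ge P\bigl(Z^{(c)}_m=0\,\big|\,Z^{(c)}_0=i_0\bigr)=\phi_m^{i_0}$. It then remains to insert the lower Agresti bound: $\phi_m\ge s(c)(1-c^m)/(s(c)-c^m)$ for $c<1$, $\phi_m\ge m/(m+2)$ for $c=1$, and $\phi_m=\alpha_c\widetilde\phi_m\ge\alpha_c\,s(\alpha_cc)\bigl(1-(\alpha_cc)^m\bigr)/\bigl(s(\alpha_cc)-(\alpha_cc)^m\bigr)$ for $c>1$. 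Raising to the power $i_0$ gives the stated left-hand sides.

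\emph{Upper bounds for $c\le1$.} Run $X$ and $Z^{(c)}$ through the \emph{maximal} coupling of $\mathrm{Bin}(n-i,1-q^i)$ and $\mathrm{Poisson}(ci)$ whenever $X_{k-1}=Z^{(c)}_{k-1}=i$, and let $\sigma$ be the first step at which they disagree; one can arrange that $X_k\le Z^{(c)}_k$ always and $X_k=Z^{(c)}_k$ for $k\le m$ on $\{\sigma>m\}$, so $P(T\le m)\le\phi_m^{i_0}+P(\sigma\le m)$ and the first term is controlled by the upper Agresti bound ($r(\cdot)$ in place of $s(\cdot)$). For $P(\sigma\le m)$, fix $J$ with $i_0<J<n$; a Taylor expansion of $1-q^i$ (cf.\ Lemma~\ref{loq}) bounds the per-step disagreement probability from a state $i<J$ by $\tfrac{3ci^2}{2n}\le\tfrac{3cJ^2}{2n}$, so a union bound over $m$ steps contributes $\le\tfrac{3cmJ^2}{2n}$, while the event that the dominating chain reaches $J$ before time $m$ costs $P(\max_k Z^{(c)}_k\ge J)$, which equals $(\alpha_c^{i_0}-1)/(\alpha_c^J-1)$ when $c<1$ and is $\le i_0/J$ when $c=1$ by applying Doob's optional stopping to the Galton--Watson martingales $\alpha_c^{Z^{(c)}_k}$, respectively $Z^{(c)}_k$, stopped at the hitting time of $[J,\infty)$ against extinction. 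This yields (ii); for (iii) one minimizes $\tfrac{3mJ^2}{2n}+\tfrac{i_0}{J}$ over $J$, obtaining $J=(ni_0/3m)^{1/3}$ and total error $\tfrac32(3mi_0^2/n)^{1/3}$.

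\emph{Upper bound for $c>1$ --- the main obstacle.} Here the dominating chain grows geometrically, so no fixed macroscopic threshold works; instead I truncate along the Kesten--Stigum martingale $W_k:=Z^{(c)}_k/c^k$. For $x>0$, the large-deviation event $\{\sup_kW_k>x\}$ has probability at most $K^{i_0}e^{-\theta x}$ for suitable $\theta,K>0$: the Poisson limit $W=\lim_kW_k$ is a sum of $i_0$ i.i.d.\ copies, each with exponential moments since the offspring law is Poisson, so Chernoff's bound together with Doob's maximal inequality for the submartingale $e^{sW_k}$ applies. On the complement, $Z^{(c)}_k\le xc^k$ for $k\le m$, and carefully accumulating the per-step disagreement probabilities (each of order $c^2(Z^{(c)}_{k-1})^2/n$) over $k\le m$, restricted to this typical event, produces the error term $\tfrac{3(xc^{m+1})^{3/2}}{2n}$; the hypothesis $xc^m<n$ only guarantees the truncation level lies below the network size. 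Combining with $\phi_m=\alpha_c\widetilde\phi_m$ and the upper Agresti bound for $\widetilde\phi_m$ completes (i). The genuinely delicate points are the sharp per-step comparison of the binomial and Poisson transition laws (including that the maximal coupling can be taken simultaneously monotone, so that threshold events for $X$ are dominated by those for $Z^{(c)}$) and, in part~(i), making the truncation-versus-accumulation trade-off come out with exactly the exponents appearing in the statement.
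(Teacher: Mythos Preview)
Your overall architecture matches the paper's proof: monotone coupling (Proposition~\ref{coupling1}) for the lower bounds, a maximal coupling step-by-step for the upper bounds via the decomposition $P(T\le m)\le P(\sigma\le m)+P(\tau\le m)$, Agresti's bounds for $P(\sigma\le m)$, and a threshold argument on the branching process for $P(\tau\le m)$. The subcritical and critical cases are essentially correct as sketched (minor slip: the martingale argument gives $P(\max_k Z^{(c)}_k\ge J)\le(\alpha_c^{i_0}-1)/(\alpha_c^J-1)$, not equality, because of overshoot).

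There is, however, a concrete gap in part~(i). You write that the per-step disagreement probability is ``of order $c^2(Z^{(c)}_{k-1})^2/n$'' and that summing yields $\tfrac{3(xc^{m+1})^{3/2}}{2n}$. These two claims are mutually inconsistent (an $i^2/n$ bound accumulated over $m$ steps with $i\le xc^m$ gives something of order $mx^2c^{2m}/n$, not $m(xc^m)^{3/2}/n$), and neither matches the statement, which has $\tfrac{3m\,c^{3(m+1)/2}x^{3/2}}{2n}$. The correct per-step bound for $c>1$ is $\tfrac{3c^{3/2}i^{3/2}}{2n}$, and it is obtained by a \emph{two-stage} approximation: first $\mathrm{Bin}(n-i,1-q^i)$ to $\mathrm{Poisson}\bigl((n-i)(1-q^i)\bigr)$ with error $\le\tfrac12(1-q^i)\le\tfrac{ci}{2n}$, then $\mathrm{Poisson}\bigl((n-i)(1-q^i)\bigr)$ to $\mathrm{Poisson}(ci)$ with error $\le\tfrac{1}{\sqrt{ci}}\bigl|ci-(n-i)(1-q^i)\bigr|$, using the sharp Poisson--Poisson total variation bound $d_{TV}\le\min\{1,\mu^{-1/2}\}\,|\mu-\nu|$. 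The factor $1/\sqrt{ci}$ is what converts an $i^2$ numerator into $i^{3/2}$ and produces the stated $3/2$-exponents. Without it your bookkeeping cannot reproduce the theorem as written.

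A second, smaller point: you flag as ``genuinely delicate'' that the maximal coupling be simultaneously monotone. The paper does \emph{not} need this and does not claim it. The lower bound uses the monotone coupling of Proposition~\ref{coupling1}; the upper bound uses a separate maximal coupling, and the threshold event is placed on $\widetilde Z$, not on $X$. Since $\widetilde X_k=\widetilde Z_k$ for $k<\tau$ by construction, the decomposition $P(\tau\le m)\le P(\tau\le m,\,\max_{k\le m}\widetilde Z_k<J)+P(\max_{k\le m}\widetilde Z_k\ge J)$ goes through with no monotonicity hypothesis on the maximal coupling.
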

The above bounds for the distribution function of $T$ are originated in their counterparts for the limiting branching process, see Lemma~\ref{tbounds} below.
The latter estimates are borrowed from \cite{agresti74}. We remark that in the supercritical regime $c>1,$ a similar result for a
frequency-dependent Wright-Fisher model has been proved in \cite{jmb} (see Theorem~3.9 there). By taking $n$ to infinity in the conclusions of the theorem,
one can obtain tight asymptotic bounds for the avalanche model under Assumption~\ref{assume7}, see Corollaries~\ref{th7} and~\ref{tc7} below for details.
\begin{proof}[Proof of Theorem~\ref{main7}]
Let $d_{TV}(X,Y)$ denote the total variation distance between the distributions of the random variables $X$ and $Y.$
That is, if $X$ and $Y$ are both non-negative and integer-valued, $d_{TV}(X,Y)=\frac{1}{2}\sum_{n=0}^\infty |P(X=n)-P(Y=n)|.$
By the coupling inequality, $d_{TV}(X,Y)\leq P(X\neq Y).$ Furthermore, there exists a maximal coupling,
that is a random pair $\bigl(\witi X,\witi Y\bigr)$ such that $\witi X$ is distributed the same as $X,$ $\witi Y$ is distributed the same as $Y,$
and $P\bigl(\witi X\neq \witi Y)=d_{TV}(X,Y)$ \cite{thor}.
\par
We will use the following inequalities. For the first claim see, for instance,
Theorem~4 and subsequent Remark~1.1.4 in \cite{poisson}, and for the second one Theorem~1.C(i) in \cite{barbour}.
\begin{lemma}
\label{lemma-approx}
\item[(i)] Let $X=BIN(n,p)$ have the binomial distribution with parameters $n\in\nn,$ $p\in(0,1)$ and $Y$ have
the Poisson distribution with parameter $c=np.$ Then $d_{TV}(X,Y)\leq \frac{p}{2}\cdot \min\{1,c\}.$
\item[(ii)] Let $X$ and $Y$ be two Poisson random variables with parameters $\mu>0$ and $c>\mu,$ respectively.
Then $d_{TV}(X,Y)\leq \min\bigl\{1,\frac{1}{\sqrt{c}}\bigr\}\cdot |\mu-c|.$
\end{lemma}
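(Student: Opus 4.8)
Both estimates are classical Poisson-approximation bounds; the paper invokes \cite{poisson} and \cite{barbour}, and I indicate how one would prove them from scratch, flagging in each part the single step that genuinely rests on the cited sharp constants. The common strategy is an explicit coupling that is optimal at the level of total variation up to one quantitative input.

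\emph{Part (i).} Write $X=\sum_{j=1}^{n}I_{j}$ with i.i.d.\ Bernoulli$(p)$ summands $I_{j}$, and construct on the same probability space i.i.d.\ $\mathrm{Poisson}(p)$ variables $N_{j}$, so that $Y=\sum_{j=1}^{n}N_{j}$. The maximal coupling of a single pair gives
\[
d_{TV}(I_{j},N_{j})=\tfrac{1}{2}\bigl(|1-p-e^{-p}|+|p-pe^{-p}|+\pp(N_{j}\ge2)\bigr)=p\,(1-e^{-p}),
\]
and since total variation is subadditive along independent sums, $d_{TV}(X,Y)\le n\,p\,(1-e^{-p})\le\min\{np^{2},np\}$, which already has the correct order in $n$ and $p$. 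Removing the superfluous factor $2$ to reach $\tfrac{p}{2}\min\{1,np\}$ requires not collapsing to one-variable couplings but running the coupling of the partial sums $\bigl(\sum_{j\le k}I_{j},\sum_{j\le k}N_{j}\bigr)$ and controlling the increment of the discrepancy created at each step by a conditional Stein-type bound; this is precisely Remark~1.1.4 of \cite{poisson}, and it is the only non-mechanical ingredient of part (i).

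\emph{Part (ii).} The bound $d_{TV}(X,Y)\le|\mu-c|$ is immediate from the additive coupling $Y\stackrel{d}{=}X+Z$ with $Z\sim\mathrm{Poisson}(c-\mu)$ independent of $X$ (equivalently, given $Y=m$ take $X\sim\mathrm{Bin}(m,\mu/c)$), since $\pp(X\neq Y)=1-e^{-(c-\mu)}\le c-\mu$; this supplies the ``$1$'' in $\min\{1,c^{-1/2}\}$. For the factor $c^{-1/2}$ I would use the interpolation bound
\[
d_{TV}\bigl(\mathrm{Poisson}(\mu),\mathrm{Poisson}(c)\bigr)\le\int_{\mu}^{c}m(t)\,dt,\qquad m(t)=\max_{j\in\zz_+}\pp(N_{t}=j),
\]
which follows from $\partial_{t}\pp(N_{t}=j)=\pp(N_{t}=j-1)-\pp(N_{t}=j)$ and unimodality of the Poisson law (the telescoping sum $\sum_{j}|\pp(N_{t}=j-1)-\pp(N_{t}=j)|$ equals $2m(t)$); since Stirling's formula gives $m(t)\le\tfrac{1}{2}t^{-1/2}$ with room to spare, the integral is at most $\sqrt{c}-\sqrt{\mu}=\tfrac{c-\mu}{\sqrt c+\sqrt\mu}\le c^{-1/2}(c-\mu)$, which is exactly the stated bound. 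Alternatively one may argue by the Stein--Chen method for $\mathrm{Poisson}(c)$: using the Stein equation $c\,g(j+1)-j\,g(j)=\mathbf{1}_{A}(j)-\mathrm{Poisson}(c)(A)$ together with the size-bias identity $\ee[X g(X)]=\mu\,\ee[g(X+1)]$ one collapses $\mathrm{Poisson}(\mu)(A)-\mathrm{Poisson}(c)(A)$ to $(c-\mu)\,\ee[g(X+1)]$ and then inserts the uniform Stein-factor bound $\sup_{A}\sup_{j}|g(j)|\le\min\{1,c^{-1/2}\}$ of Theorem~1.C(i) in \cite{barbour}. Thus there is no real obstacle here: each inequality reduces to a short coupling computation plus one sharp-constant input --- the factor-$2$ refinement of \cite{poisson} in (i) and the modal bound $m(t)\le\tfrac12 t^{-1/2}$ (equivalently, the optimal Poisson Stein factor) in (ii) --- and it is exactly those two inputs that carry all the content of the lemma.
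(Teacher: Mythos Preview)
The paper does not actually prove this lemma; it records it as a known result, citing Theorem~4 and Remark~1.1.4 of \cite{poisson} for part~(i) and Theorem~1.C(i) of \cite{barbour} for part~(ii). Your proposal therefore goes well beyond what the paper does, and your sketches are essentially sound: in (i) the Le~Cam coupling correctly isolates the single nontrivial input (the sharp constant from \cite{poisson}), and in (ii) both the interpolation argument via unimodality and the modal bound $m(t)\le\tfrac12 t^{-1/2}$, and the alternative Stein--Chen route with the optimal Stein factor from \cite{barbour}, are valid and standard.

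One small overstatement worth flagging: in (i) you say the crude bound $np(1-e^{-p})\le np^{2}$ ``already has the correct order in $n$ and $p$'', but when $c=np>1$ the target $\tfrac{p}{2}\min\{1,c\}=\tfrac{p}{2}$ is smaller than $np^{2}$ by a factor of $2np$, not merely by $2$. The $\min\{1,c\}$ in the statement is a genuine Stein--Chen improvement (it comes from the magic factor $(1-e^{-c})/c$), not just a constant sharpening of the coupling bound; so the cited reference supplies more than you suggest. This does not affect correctness, since you explicitly defer to \cite{poisson} for the final step anyway.
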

Using the above results, we can construct a coupling of the Markov chain $(X_k)_{k\in \zz_+}$ and the limiting branching process $(Z_k)_{k\in \zz_+}$ as follows.
The resulting process $\bigl(\witi X_k,\witi Z_k\bigr)_{k\in \zz_+}$ is a Markov chain.
Suppose that the random pairs $\bigl(\witi X_t,\witi Z_t\bigl)$ have been sampled for all $t\leq k$ and that
$\witi X_t=\witi Z_t$ for all $t\leq k.$ Let $i$ be the common value of $\witi Z_k$ and $\witi X_k.$ Sample the next pair $\bigl(\witi X_{k+1},\witi Z_{k+1}\bigr)$
using the maximal coupling for $X_{k+1}$ under the conditional distribution $P(X_{k+1}\in \cdot\,|X_k=i)$ and
$Z^{(c)}_{k+1}$ under the conditional distribution $P\bigl(Z^{(c)}_{k+1}\in \cdot\,\bigl|Z^{(c)}_k=i\bigr).$
After the random time $\tau:=\min\bigl\{k\in\nn:\witi X_k\neq \witi Z_k\bigr\},$ sample $\bigl(\witi Z_t\bigr)_{t\geq \tau}$ and $\bigl(\witi X_t)_{t\geq \tau}$ independently.
Using Lemma~\ref{lemma-approx} and at first approximating $\witi X_{k+1}$ by a Poisson random variable with parameter $(n-i)(1-q^i),$ we obtain that for $c>1,$
\beq
\nonumber
\label{dif}
P\bigl(\witi X_{k+1}\neq \witi Z_{k+1}\bigl|\witi X_k=\witi Z_k=i\bigr)
&
\leq& \frac{1}{2}(1-q^i)+\frac{1}{\sqrt{c i}}|(n-i)(1-q^i)-c i|
\\
&\leq& \frac{ci}{2n}+\frac{1}{\sqrt{c i}}\bigl[ci-(n-i)(1-q^i)\bigr].
\feq
Using the bound in Lemma~\ref{loq} we conclude that in the case when $c>1,$
\beqn
\label{diff}
P\bigl(\witi X_{k+1}\neq \witi Z_{k+1}\bigl|\witi X_k=\witi Z_k=i\bigr)\leq \frac{ci}{2n}+\frac{c^{1/2}i^{3/2}}{2n}+\frac{c^{3/2}i^{3/2}}{2n}
\leq \frac{3c^{3/2}i^{3/2}}{2n}.
\feqn
Similarly, when $c\leq 1,$ without making an assumption on whether $ci\leq 1$ or not,
\beqn
\label{diff1}
P\bigl(\witi X_{k+1}\neq \witi Z_{k+1}\bigl|\witi X_k=\witi Z_k=i\bigr)\leq \frac{ci}{2n}+\frac{ci^2}{2n}+\frac{c^2i^2}{2n}
\leq \frac{3ci^2}{2n}.
\feqn
Recall
\beq
\tau=\inf\bigl\{k\in\nn: \witi X_k\neq \witi Z_k\bigr\}
\feq
and let
\beq
\sigma=\inf\bigl\{k\in \nn: \witi Z_k=0\bigr\}.
\feq
To evaluate the distribution function of $T$ we will use the following inequalities:
\beqn
\nonumber
P(T\leq m)&\leq& P(T\leq m,\tau>m)+P(\tau\leq m)
\\
\label{basic}
&\leq&
 P(\sigma\leq m)+P(\tau\leq m)
\feqn
and
\beqn
\label{basic1}
P(T\leq m)\geq P(\sigma\leq m).
\feqn
The latter inequality holds true because the avalanche process $X$ is stochastically dominated
by the branching process $Z^{(c)}$ by virtue of Proposition~\ref{coupling1}.
\par
The following lemma summarizes results of \cite{agresti74} regarding the distribution (subdistribution if the process is supercritical)
function $P(\sigma \leq m).$ Specific bounds for the extinction time of a Poisson distribution are derived in Theorem~2 of \cite{agresti74}.
\par
Recall $\alpha_c$ from \eqref{alpha} and $s(c),r(c)$ from \eqref{s74}.
\begin{lemma}
[\cite{agresti74}]
\label{tbounds}
\item [(i)] If $c>1,$ then for any $m\in\nn,$  $\bigl(\frac{\alpha_c s_1(1-c^m\alpha_c^m)}{s_1-c^m\alpha_c^m}\bigr)^{i_0}\leq
P(\sigma\leq m)\leq \bigl(\frac{\alpha_c r_1 (1-c^m\alpha_c^m)}{r_1-c^m\alpha_c^m}\bigr)^{i_0},$
where $s_1=s(\alpha_cc)$ and $r_1=r(\alpha_cc).$
\item [(ii)] If $c<1,$ then for any $m\in\nn,$  $\bigl(\frac{s_2(1-c^m)}{s_2-c^m}\bigr)^{i_0}\leq
P(\sigma\leq m)\leq \bigl(\frac{r_2(1-c^m)}{r_2-c^m}\bigr)^{i_0},$
where $s_2=s(c)$ and $r_2=r(c).$
\item [(iii)] If $c=1,$ then for any $m\in\nn,$  $\bigl(\frac{m}{m+2}\bigr)^{i_0}\leq
P(\sigma\leq m)\leq \bigl(\frac{m}{m+e-1}\bigr)^{i_0}.$
\end{lemma}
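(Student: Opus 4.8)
The plan is to reduce the statement to the one-ancestor extinction-by-time-$m$ probability and then control the iteration of the Poisson generating function $f(s)=e^{c(s-1)}$. With $g_m:=f^{\circ m}(0)$, independence of the subtrees rooted at the $i_0$ ancestors gives $P(\sigma\le m)=g_m^{i_0}$, so it suffices to prove each displayed estimate in the case $i_0=1$. Put $u_m=1-g_m$; then $g_0=0$, so $u_0=1$, the sequence $u_m$ is decreasing and stays in $(0,1]$, and it satisfies $u_{m+1}=1-f(g_m)=1-e^{-cu_m}=:\psi(u_m)$, where $\psi$ is increasing on $[0,\infty)$.

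For the critical case $c=1$ I would use the telescoping identity $\tfrac1{u_{m+1}}-\tfrac1{u_m}=h(u_m)$, where $h(u)=\tfrac1{1-e^{-u}}-\tfrac1u$, together with the fact that $h$ is increasing on $(0,\infty)$; the latter reduces, after clearing denominators and taking a square root, to $2\sinh(u/2)\ge u$, which is immediate. Since $0<u_m\le 1$ we get $h(0^+)=\tfrac12\le h(u_m)\le h(1)=\tfrac1{e-1}$, and summing over $m$ with $u_0=1$ yields $1+\tfrac m2\le \tfrac1{u_m}\le 1+\tfrac m{e-1}$, i.e. $\tfrac m{m+2}\le g_m\le\tfrac m{m+e-1}$; raising to the power $i_0$ gives part (iii).

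For the subcritical case $c<1$ the idea is to sandwich $\psi$ between two linear-fractional (Möbius) maps $\ell_a(u)=\tfrac{cu}{1+au}$ on $(0,1]$: the upper bound $\psi(u)\le\ell_{c/2}(u)$, which after substituting $t=cu$ becomes $e^t\le\tfrac{2+t}{2-t}$ (immediate from $\log\tfrac{2+t}{2-t}=\int_0^t\tfrac4{4-s^2}\,ds\ge t$); and the lower bound $\psi(u)\ge\ell_{a_1}(u)$ with $a_1=\tfrac{e^{-c}-(1-c)}{1-e^{-c}}$, the Möbius map interpolating $\psi$ at $u=0$ and $u=1$, for which one verifies that $g(u):=(1-e^{-cu})(1+a_1u)-cu$ has $g(0)=g'(0)=g(1)=0$ and $g''$ changing sign exactly once on $(0,1)$, forcing $g\ge 0$ there. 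Because $\ell_a$ is increasing, a one-line induction gives $\ell_{c/2}^{\circ m}(1)\ge u_m\ge\ell_{a_1}^{\circ m}(1)$. The Möbius recursion $v_{m+1}=\ell_a(v_m)$, $v_0=1$, linearizes under $w=1/v$ to $w_{m+1}=\tfrac1c w_m+\tfrac ac$ and solves to $1-v_m=\tfrac{s(1-c^m)}{s-c^m}$ with $s=1+\tfrac{1-c}{a}$; taking $a=c/2$ gives $s=s_2$ and $a=a_1$ gives $s=r_2$, and since $x\mapsto\tfrac{x(1-c^m)}{x-c^m}$ is decreasing this yields $\tfrac{s_2(1-c^m)}{s_2-c^m}\le g_m\le\tfrac{r_2(1-c^m)}{r_2-c^m}$, hence part (ii) after exponentiating by $i_0$.

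Finally, the supercritical case $c>1$ reduces to the subcritical one by conjugation with the extinction probability $\alpha_c$. Using $\alpha_c=e^{c(\alpha_c-1)}$ one checks that $\widehat f(s):=\alpha_c^{-1}f(\alpha_c s)=e^{c\alpha_c(s-1)}$ is the generating function of a subcritical Poisson branching process with mean $c\alpha_c<1$, and by induction $\widehat f^{\circ m}(0)=\alpha_c^{-1}f^{\circ m}(0)$; hence the one-ancestor probability satisfies $g_m^{(c)}=\alpha_c\, g_m^{(c\alpha_c)}$. Inserting the part-(ii) bounds for parameter $c\alpha_c$ (note $(c\alpha_c)^m=c^m\alpha_c^m$, $s(c\alpha_c)=s_1$, $r(c\alpha_c)=r_1$) and raising to the power $i_0$ gives part (i). The main technical obstacle is the second Möbius estimate $\psi\ge\ell_{a_1}$ in the subcritical step: unlike the other inequalities it is tight at an endpoint and is not the consequence of a single convexity statement, so it genuinely requires the sign analysis of $g''$ indicated above; the critical inequality $e^t\le\tfrac{2+t}{2-t}$ and the duality bookkeeping are then routine.
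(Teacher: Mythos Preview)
The paper does not give a proof of this lemma; it is quoted verbatim from Agresti (1974), with the remark that ``specific bounds for the extinction time of a Poisson distribution are derived in Theorem~2 of \cite{agresti74}.'' Your proposal is a correct, self-contained derivation, and it follows the same strategy as Agresti's original: sandwich the offspring generating function between two linear-fractional generating functions, iterate explicitly, and in the supercritical case pass to the dual subcritical process via the substitution $s\mapsto\alpha_c s$. The identification $P(\sigma\le m)=\bigl(f^{\circ m}(0)\bigr)^{i_0}$, the telescoping device $1/u_{m+1}-1/u_m=h(u_m)$ in the critical case with the monotonicity of $h$ reducing to $2\sinh(u/2)\ge u$, the two M\"obius bounds $\ell_{c/2}$ and $\ell_{a_1}$ for $\psi(u)=1-e^{-cu}$, and the duality $\widehat f(s)=\alpha_c^{-1}f(\alpha_c s)=e^{c\alpha_c(s-1)}$ are all correct and lead to exactly the stated inequalities.

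One small point worth making explicit in your write-up of the subcritical lower bound: you assert ``$g''$ changing sign exactly once on $(0,1)$,'' but what you actually need (and what you have) is that $g''$ changes sign \emph{at most} once on $(0,\infty)$, being positive near $0$ since $a_1>c/2$ (equivalent to $e^c<\tfrac{2+c}{2-c}$, which you have already proved). Combined with $g(0)=g'(0)=g(1)=0$, this already forces $g\ge 0$ on $[0,1]$: if $g''>0$ throughout $(0,1)$ then $g$ would be strictly increasing and $g(1)>0$, a contradiction; otherwise $g'$ is unimodal with $g'(0)=0$, so $g$ rises and then falls, and $g(0)=g(1)=0$ gives $g\ge 0$. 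This is exactly your intended argument, just with the quantifier tightened.
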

We next estimate $P(\tau\leq m).$ For $k\in\nn$ let $W_k=\max_{0\leq i \leq k} \witi Z_i.$ By the Markov property of
$\bigl(\witi X_k,\witi Z_k\bigr)_{k\in \zz_+},$ for any $J\in\nn$ we have:
\begin{align}
\nonumber
P(\tau\leq m)&\leq P(\tau\leq m~\mbox{\rm and}~W_m< J)+P(W_m\geq J)
\\
\nonumber
&\leq
P\Bigl(\bigcup_{k=1}^m \bigl\{\witi X_{k-1}=\witi Z_{k-1}<J,\,\witi X_k\neq \witi Z_k\bigr\}\Bigr)+P(W_m\geq J)
\\
\label{i1}
&\leq m\cdot P\bigl(\witi X_k\neq \witi Z_k\,\bigr|\,\witi X_{k-1}=\witi Z_{k-1}<J\bigr) +P(W_m\geq J).
\end{align}
The first part of the next lemma is an improved version of Lemma~5.6 in \cite{jmb}.
\begin{lemma}
\label{lemma1}
\item [(i)] Suppose that $c>1.$ Then there exist constants $\theta >0$ and $K >0$ such that for any $x>0$ and $m\in\nn$ we have
$P(W_m\geq x c^m)\leq K^{i_0}e^{-\theta x}.$
\item [(ii)] If $c<1,$ then for any $m\in\nn$ and integer $J\geq i_0,$ we have $P(W_m\geq J) \leq \frac{\alpha_c^{i_0}-1}{\alpha_c^J-1}.$
\item [(iii)] If $c=1,$ then for any $m\in\nn$ and integer $J\geq i_0,$ we have $P(W_m\geq J) \leq  \frac{i_0}{J}.$
\end{lemma}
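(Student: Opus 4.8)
The plan is to obtain all three estimates from Doob's optional stopping theorem (equivalently, Doob's maximal inequality) applied to an appropriate nonnegative martingale built from the branching process; since $W_m$ depends only on the law of $\bigl(\witi Z_k\bigr)_{0\le k\le m}$, which is that of $Z^{(c)}$ started at $i_0$, I work with $Z^{(c)}$ directly.

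\textbf{Parts (ii) and (iii).} For $c=1$ the process $\bigl(Z^{(1)}_k\bigr)_{k\in\zz_+}$ is itself a nonnegative martingale with $E\bigl(Z^{(1)}_k\bigr)=i_0$, so Doob's maximal inequality gives $P(W_m\ge J)=P\bigl(\max_{0\le k\le m}Z^{(1)}_k\ge J\bigr)\le i_0/J$. For $c<1$ recall from Section~\ref{fixp} that $\alpha_c>1$ solves \eqref{alpha} and that $\bigl(\alpha_c^{Z^{(c)}_k}\bigr)_{k\in\zz_+}$ is a martingale. The bare maximal inequality here would only produce the weaker bound $\alpha_c^{i_0-J}$, so instead I apply optional stopping at the bounded time $\sigma_J\wedge m$, where $\sigma_J=\inf\bigl\{k:Z^{(c)}_k\ge J\ \text{or}\ Z^{(c)}_k=0\bigr\}$: on $\{W_m\ge J\}$ one has $\sigma_J\le m$ and $\alpha_c^{Z^{(c)}_{\sigma_J}}\ge\alpha_c^{J}$ since $\alpha_c>1$, while on the complementary event $\alpha_c^{Z^{(c)}_{\sigma_J\wedge m}}\ge1$ (it equals $1$ at extinction and is at least $\alpha_c$ if the process is still positive at time $m$). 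Therefore $\alpha_c^{i_0}=E\bigl(\alpha_c^{Z^{(c)}_{\sigma_J\wedge m}}\bigr)\ge\alpha_c^{J}P(W_m\ge J)+\bigl(1-P(W_m\ge J)\bigr)$, and rearranging yields $P(W_m\ge J)\le(\alpha_c^{i_0}-1)/(\alpha_c^{J}-1)$.

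\textbf{Part (i).} Here $c>1$, and the natural object is the classical positive martingale $M_k:=Z^{(c)}_k/c^k$. Since $c>1$, for every $n\le m$ one has $Z^{(c)}_n=c^nM_n\le c^mM_n$, whence the deterministic inclusion $\{W_m\ge xc^m\}\subseteq\bigl\{\max_{0\le n\le m}M_n\ge x\bigr\}$; it thus suffices to bound $P\bigl(\max_{n\le m}M_n\ge x\bigr)$. Fixing a small $\lambda>0$, the process $e^{\lambda M_n}$ is a nonnegative submartingale, so Doob's maximal inequality gives $P\bigl(\max_{n\le m}M_n\ge x\bigr)\le e^{-\lambda x}\sup_nE\bigl(e^{\lambda M_n}\bigr)$, and the proof reduces to the uniform estimate $\sup_nE\bigl(e^{\lambda M_n}\,\big|\,Z^{(c)}_0=i_0\bigr)\le K^{i_0}$. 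By the branching property this expectation equals $\bigl(E(e^{\lambda M_n}\mid Z^{(c)}_0=1)\bigr)^{i_0}$, and $E(e^{\lambda M_n}\mid Z^{(c)}_0=1)=f_n\bigl(e^{\lambda/c^n}\bigr)$, where $f(s)=e^{-c(1-s)}$ is the Poisson offspring generating function and $f_n$ its $n$-fold iterate. From $f(e^{u})=e^{c(e^{u}-1)}\ge e^{cu}$ and the monotonicity of $f_n$ one checks that $n\mapsto f_n\bigl(e^{\lambda/c^n}\bigr)$ is nondecreasing; if its limit is finite, I take $K$ to be that limit and $\theta:=\lambda$, which finishes the proof.

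\textbf{Main obstacle.} The one substantial point is the finiteness, for some $\lambda>0$, of $\lim_nf_n\bigl(e^{\lambda/c^n}\bigr)$, equivalently of the exponential moment of the a.s.\ limit $W=\lim_kM_k$ of the normalised branching process. This is the classical fact that $W$ inherits a finite exponential moment from the light-tailed Poisson offspring distribution; I would either invoke it from the theory of supercritical Galton--Watson processes or prove it directly, by exhibiting a finite, increasing supersolution $\Psi$ on a small interval $[0,\varepsilon]$ of the functional equation $\Psi(t)=\exp\{c(\Psi(t/c)-1)\}$ satisfying $\Psi(u)\ge e^{u}$ there, and verifying by induction on $n$ that $f_n\bigl(e^{\lambda/c^n}\bigr)\le\Psi(\lambda)$ for all $n$ whenever $\lambda\le\varepsilon$. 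This quantitative control is precisely what sharpens the corresponding estimate of \cite{jmb}.
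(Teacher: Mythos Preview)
Your proof is correct and follows essentially the same route as the paper: Doob's maximal inequality applied to the submartingale $e^{\theta M_k}$ for part~(i), and the classical martingale/optional-stopping bounds for parts~(ii) and~(iii). The only difference is that you spell out the arguments in detail, whereas the paper cites Athreya~\cite{kba} for the uniform exponential-moment bound $\sup_m E(e^{\theta U_m})<\infty$ in~(i) and Lindvall~\cite{lind} for~(ii) and~(iii).
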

\begin{proof}[Proof of Lemma~\ref{lemma1}]
$\mbox{}$
\\
(i) Let $U_k:=c^{-k}\witi Z_k.$ Then $(U_k)_{k\in\zz_+}$ is a martingale with respect to its natural filtration.
For any $\theta>0,$ $f(x)=e^{\theta x}$ is a convex function and hence the sequence $e^{\theta U_k},$ $k\in \zz_+,$ form a submartingale. Hence, by Doob's maximal inequality,
\beq
P(W_m\geq xc^m)&\leq & P\bigl(\max_{0\leq k \leq m} e^{\theta U_k}\geq e^{\theta x}\bigr)\leq
e^{-\theta x}E\bigl(e^{\theta U_m}\bigr).
\feq
The result now follows from Theorem~4 in \cite{kba} which states that $\sup_{m\in\nn} E\bigl(e^{\theta U_m}\bigr)<+\infty$ for some $\theta>0.$
\\
$\mbox{}$
\\
(ii) and (iii) This is a direct implication of Theorem~2 in \cite{lind}.
\end{proof}
The claims in the theorem follow now by combining the bounds in \eqref{diff}--\eqref{i1} with the estimates in Lemmas~\ref{tbounds} and~\ref{lemma1}.
In the case when $c=1,$ the optimization problem over the optimal choice of the parameter $J$ in the upper bound for $P(T\leq m)$ can be solved explicitly,
and we choose $J=\bigl(\frac{n i_0^2}{3m}\bigr)^{1/3}.$
\end{proof}
We will next consider a family of avalanches under Assumption~\ref{assume7}. For $J>0$ let
\beqn
\label{tiveps3}
T^{(n)}=\min\bigl\{k\in\nn:X^{(n)}_k=0\bigr\}.
\feqn
The following result shows that the bounds in Lemma~\ref{tbounds} hold asymptotically for the avalanche model.
\begin{corollary}
\label{th7}
Let Assumption~\ref{assume7} hold.
\item [(i)] If $\lambda>1,$ then for any $m\in\nn,$
\beq
\Bigl(\frac{\alpha_\lambda s_3(1-\lambda^m\alpha_\lambda^m)}{s_3-\lambda^m\alpha_\lambda^m}\Bigr)^{i_0}
&\leq&
\liminf_{n\to\infty}P\bigl(T^{(n)}\leq m\bigr)
\\
&
\leq& \limsup_{n\to\infty}P\bigl(T^{(n)}\leq m\bigr)
\leq \Bigl(\frac{\alpha_\lambda r_3 (1-\lambda^m\alpha_\lambda^m)}{r_3-\lambda^m\alpha_\lambda^m}\Bigr)^{i_0},
\feq
where $s_3=s(\alpha_\lambda\lambda)$ and $r_3=r(\alpha_\lambda\lambda).$
\item [(ii)] If $\lambda<1,$ then for any $m\in\nn,$
\beq
\Bigl(\frac{s_4(1-\lambda^m)}{s_4-\lambda^m}\Bigr)^{i_0}\leq
\liminf_{n\to\infty}P\bigl(T^{(n)}\leq m\bigr)\leq \limsup_{n\to\infty}P\bigl(T^{(n)}\leq m\bigr)
\leq
\Bigl(\frac{r_4(1-\lambda^m)}{r_4-\lambda^m}\Bigr)^{i_0},
\feq
where $s_4=s(\lambda)$ and $r_4=r(\lambda).$
\item [(iii)] If $\lambda=1,$ then for any $m\in\nn,$
\beq
\Bigl(\frac{m}{m+2}\Bigr)^{i_0}\leq
\liminf_{n\to\infty}P\bigl(T^{(n)}\leq m\bigr)\leq \limsup_{n\to\infty}P\bigl(T^{(n)}\leq m\bigr)
\leq \Bigl(\frac{m}{m+e-1}\Bigr)^{i_0}.
\feq
\end{corollary}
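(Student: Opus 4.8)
The plan is to derive the corollary from Theorem~\ref{main7} — more precisely, from the coupling set up in its proof — by letting $n\to\infty$ with $m$ held fixed, reducing everything to the bounds for the limiting Poisson branching process recorded in Lemma~\ref{tbounds}. Write $c_n=np_n$; by Assumption~\ref{assume7} and Condition~\ref{cdla} we have $c_n\to\lambda$ and $d\le c_n\le c$ for $n\ge n_0$, with $0<d\le c<\infty$. For each such $n$ I would reuse the coupling $\bigl(\witi X^{(n)}_k,\witi Z^{(n)}_k\bigr)_{k\in\zz_+}$ from the proof of Theorem~\ref{main7}, in which $\bigl(\witi X^{(n)}_k\bigr)$ has the law of $X^{(n)}$, $\bigl(\witi Z^{(n)}_k\bigr)$ has the law of $Z^{(c_n)}$, and the two chains agree for $k<\tau^{(n)}$, where $\tau^{(n)}=\inf\{k:\witi X^{(n)}_k\neq\witi Z^{(n)}_k\}$; set $\sigma^{(n)}=\inf\{k:\witi Z^{(n)}_k=0\}$. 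Inequalities \eqref{basic1} and \eqref{basic}, established there, sandwich $P\bigl(T^{(n)}\le m\bigr)$ between $P\bigl(\sigma^{(n)}\le m\bigr)$ and $P\bigl(\sigma^{(n)}\le m\bigr)+P\bigl(\tau^{(n)}\le m\bigr)$. So the whole statement reduces to two limits: $P\bigl(\sigma^{(n)}\le m\bigr)\to P\bigl(\sigma^{(\lambda)}\le m\bigr)$, where $\sigma^{(\lambda)}$ is the extinction time of $Z^{(\lambda)}$, and $P\bigl(\tau^{(n)}\le m\bigr)\to 0$. Granting these, sending $n\to\infty$ in the sandwich yields $\lim_n P\bigl(T^{(n)}\le m\bigr)=P\bigl(\sigma^{(\lambda)}\le m\bigr)$ — a genuine limit, slightly more than asked — and the three displayed pairs of bounds then come out of Lemma~\ref{tbounds} applied with $c=\lambda$.

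The first limit is just continuity of extinction probabilities in the offspring mean: $P\bigl(\sigma^{(n)}\le m\bigr)=P\bigl(Z^{(c_n)}_m=0\bigr)=\bigl(g^{(c_n)}_m(0)\bigr)^{i_0}$, where $g^{(c)}(s)=e^{c(s-1)}$ is the Poisson$(c)$ generating function and $g^{(c)}_m$ its $m$-fold self-composition, and $(c,s)\mapsto g^{(c)}_m(s)$ is smooth on $(0,\infty)\times[0,1]$; since $c_n\to\lambda$ the claim follows. A convenient feature of routing the argument through $P\bigl(\sigma^{(\lambda)}\le m\bigr)$, rather than through the explicit expressions in Theorem~\ref{main7}, is that the critical case $\lambda=1$ needs no separate treatment and the $c_n$ are allowed to straddle the value $1$.

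For the second limit I would start from \eqref{i1}, which for any $J\in\nn$ bounds $P\bigl(\tau^{(n)}\le m\bigr)$ by $m$ times the one-step disagreement probability when the common value is below $J$, plus $P\bigl(W^{(n)}_m\ge J\bigr)$, where $W^{(n)}_m=\max_{0\le i\le m}\witi Z^{(n)}_i$. By \eqref{diff}--\eqref{diff1} together with $c_n\le c$, the first piece is $O\bigl(J^2/n\bigr)$ with a constant depending only on $m$ and $c$. For the second piece, the standard monotone coupling of Poisson laws provides a path-wise domination $Z^{(c_n)}\preceq Z^{(c)}$, so $P\bigl(W^{(n)}_m\ge J\bigr)\le P\bigl(\max_{0\le i\le m}Z^{(c)}_i\ge J\bigr)$, and the right-hand side tends to $0$ as $J\to\infty$ because that maximum is almost surely finite. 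Choosing $J=J_n$ with $J_n\to\infty$ and $J_n^2=o(n)$ (for instance $J_n=\lfloor n^{1/3}\rfloor$) makes both pieces vanish.

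The only real work is in the preceding paragraph, and even there the point is merely to keep two estimates uniform in $n$: the one-step coupling error (uniform because $c_n$ is bounded away from $0$ and $\infty$ by Condition~\ref{cdla}) and the tail $P\bigl(W^{(n)}_m\ge J\bigr)$ (uniform because every $Z^{(c_n)}$ is dominated path-wise by the single branching process $Z^{(c)}$). With these in hand, the sandwich and Lemma~\ref{tbounds} deliver parts (i)--(iii) simultaneously; I expect no step to present a serious obstacle.
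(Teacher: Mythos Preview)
Your proposal is correct and follows essentially the same route as the paper: both take $n\to\infty$ in the coupling estimates underlying Theorem~\ref{main7}, and both identify as the one nontrivial point the need to control $P\bigl(W^{(n)}_m\ge J\bigr)$ uniformly in $n$ by dominating $Z^{(c_n)}$ with the fixed process $Z^{(c)}$ and then choosing $J=J_n\to\infty$ with $J_n^2=o(n)$. The paper even flags this substitution in a footnote.

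The only organizational difference worth noting is that you route the argument through the identity $\lim_n P\bigl(T^{(n)}\le m\bigr)=P\bigl(\sigma^{(\lambda)}\le m\bigr)$ via continuity of the iterated generating function $c\mapsto g^{(c)}_m(0)$, and then invoke Lemma~\ref{tbounds} once at $c=\lambda$; the paper instead takes limits directly in the explicit bounds of Theorem~\ref{main7}, which obliges it to verify (by L'Hospital) that those bounds are continuous in $c$ across $c=1$. Your route is marginally cleaner in that it handles all three regimes uniformly and yields an actual limit rather than a sandwich, but the substance is the same.
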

\begin{proof}
Informally speaking, the obvious strategy for the proof is to take $n$ to infinity in the conclusions of Theorem~\ref{main7}.
Let $\alpha_1=r(1)=s(1)=1.$ Observe that for any $\lambda>0,$ $\lim_{\mu\to \lambda} \alpha_\mu=\alpha_\lambda,$
and for any $\lambda \in [0,1],$
\beq
\lim_{\mu\to \lambda} s(\mu)=s(\lambda), \quad
\lim_{\mu\to \lambda} r(\mu)=r(\lambda),
\feq
where in the case $\lambda=1$ the latter limits are understood as left limits. Furthermore, the limits in Lemma~\ref{tbounds} are continuous functions of $c.$
This is evident for $c\neq 1,$ and for $c=1$ by the L'Hospital rule we have
\beq
\lim_{c\to 1^-} \frac{s(c)(1-c^m)}{s(c)-c^m}=
\lim_{c\to 1^-}\frac{-(1-c^m)-m(2-c)c^{m-1}}{-1-(m+1)c^m}=\frac{m}{m+2}
\feq
and
\beq
\lim_{c\to 1^-} \frac{r(c)(1-c^m)}{r(c)-c^m}&=&
\lim_{c\to 1^-}\frac{(e^{-c}-ce^{-c})(1-c^m)-me^{-c}c^m}{e^{-c}-ce^{-c}-mc^{m-1}(e^{-c}-1+c)-c^m(-e^{-c}+1)}
\\
&=&\frac{me^{-1}}{me^{-1}-e^{-1}+1}=\frac{m}{m+e-1},
\feq
as desired.
\par
For $k\in\nn$ and $\mu>0,$ let $W^{(\mu)}_k=\max_{0\leq i \leq k} Z^{(\mu)}_i.$
In comparison with the proof of Theorem~\ref{main7}, the only nuance here is that before letting $n$ go to infinity the term
$P\bigl(W^{(np_n)}_m\geq J\bigr)$ in \eqref{i1}, which a priori is not uniform in $n,$  should be replaced with $P\bigl(W^{(c)}_m\geq J_n\bigr),$
where $c$ is the constant introduced in Condition~\ref{cdla} and $J_n$ is a suitable sequence which in particular satisfies $\lim_{n\to\infty}J_n=+\infty.$
\footnote{Technically, this part of the proof makes the claim a corollary to a slight modification of Theorem~\ref{main7} rather than to the theorem
itself.}
\end{proof}
Another interesting consequence of Theorem~\ref{main7} is the following result which shows that on the right scale,
the asymptotic behavior of the avalanche model coincides with that of the branching process (see, for instance, compact introduction section
in \cite{cks} for the underlying branching process estimates).
\begin{corollary}
\label{tc7}
Let Assumption~\ref{assume7} hold.
\item [(i)] Suppose that $\lambda>1.$   Let $(\beta_n)_{n\in\nn}$ be a sequence of positive constants such that
\beq
\lim_{n\to\infty} \beta_n\log n=+\infty\qquad \mbox{\rm and}\qquad \limsup_{n\to\infty} \beta_n<\frac{2}{3\log \lambda}.
\feq
Let $m_n=\beta_n\log n.$ Then $\lim_{n\to\infty}P\bigl(T^{(n)}> m_n\bigr)=1-\alpha_\lambda^{i_0}.$
\item [(ii)] Suppose that $\lambda>1.$ Let $(\beta_n)_{n\in\nn}$ be a sequence of positive constants such that
\beq
\lim_{n\to\infty} \frac{\beta_n(\log n)^3}{n}=0\qquad \mbox{\rm and}\qquad \liminf_{n\to\infty} \beta_n>-\frac{1}{3\log \lambda}.
\feq
Let $m_n=\beta_n\log n.$ Then $\lim_{n\to\infty}\frac{1}{m_n}\log P\bigl(T^{(n)}>m_n\bigr)=\log \lambda.$
\item [(iii)] Assume $\lambda=1.$ Let $(m_n)_{n\in\nn}$ be a sequence of positive integers such that $\lim_{n\to\infty} m_n=+\infty$ and
$\lim_{n\to\infty} \frac{m_n^4}{n}=0.$ Then $\lim_{n\to\infty} m_n P\bigl(T^{(n)}>m_n\bigr)=2i_0.$
\end{corollary}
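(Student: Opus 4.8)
The plan is to derive all three statements by letting $n\to\infty$ in the single-network bounds of Theorem~\ref{main7}, applied to each member $X^{(n)}$ of the family with the choice $c=c_n:=np_n$, so that $c_n\to\lambda$ by Assumption~\ref{assume7}; the free parameters of those bounds --- $x$ in part~(i) of the theorem and the threshold $J$ in parts~(ii) and~(iii) --- will be taken to be suitable functions of $n$. I will use repeatedly the continuity of $\mu\mapsto\alpha_\mu$, $\mu\mapsto s(\mu)$, $\mu\mapsto r(\mu)$ and of the subdistribution bounds of Lemma~\ref{tbounds} (the values at $\mu=1$ being the one-sided limits already computed via L'Hospital's rule in the proof of Corollary~\ref{th7}), together with the inequality $\mu\alpha_\mu<1$ for $\mu>1$, which guarantees $(c_n\alpha_{c_n})^{m_n}\to0$ whenever $m_n\to\infty$. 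One preliminary point should be settled first: the constants $K$ and $\theta$ produced by Lemma~\ref{lemma1}(i) (equivalently, by Theorem~4 of \cite{kba}) can be chosen uniformly for $c$ in a compact subinterval of $(1,\infty)$; since $c_n\to\lambda>1$, one pair $(K,\theta)$ then serves for all large $n$ in parts~(i) and~(ii). A similar uniformity, as $c_n$ ranges over a neighborhood of $\lambda$ bounded away from $1$, is needed for the \cite{agresti74} bounds.

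For part~(i), I apply Theorem~\ref{main7}(i) with $m=m_n$ (which may be replaced by $\lfloor\beta_n\log n\rfloor$ without affecting the limit) and $x=x_n:=(\log n)^2$. The admissibility condition $x_nc_n^{m_n}<n$ holds eventually, since $c_n^{m_n}=n^{\beta_n\log c_n}$ and $\limsup_n\beta_n\log c_n=(\limsup_n\beta_n)\log\lambda<\frac{2}{3}<1$. In the resulting upper bound, the two ``main'' terms tend to $\alpha_\lambda^{i_0}$ (using $(c_n\alpha_{c_n})^{m_n}\to0$ and the convergence of the $s$- and $r$-constants), the term $K^{i_0}e^{-\theta x_n}\to0$, and the correction $\frac{3c_n^{3(m_n+1)/2}m_nx_n^{3/2}}{2n}\to0$ because $\frac{c_n^{3m_n/2}}{n}=n^{\frac{3}{2}\beta_n\log c_n-1}\le n^{-\delta}$ for large $n$ and some $\delta>0$ --- and it is precisely the hypothesis $\limsup_n\beta_n<\frac{2}{3\log\lambda}$ that secures this --- while $m_nx_n^{3/2}$ is only polylogarithmic. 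The matching lower bound of Theorem~\ref{main7}(i) also tends to $\alpha_\lambda^{i_0}$, so $P(T^{(n)}\le m_n)\to\alpha_\lambda^{i_0}$, and hence $P(T^{(n)}>m_n)\to1-\alpha_\lambda^{i_0}$.

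Part~(ii) is the geometric-scale analogue: one tracks the leading order of the small probability $P(T^{(n)}>m_n)$ instead of merely its limit. Expanding the $s$- and $r$-bounds of the relevant assertion of Theorem~\ref{main7} to first order in their small parameter shows that $P(T^{(n)}>m_n)$ agrees, up to a factor $1+o(1)$ and up to the two error terms of that bound, with a quantity of order $\rho_n^{m_n}$ where $\rho_n\to\lambda$; taking the auxiliary threshold $J=J_n$ of order $\log n$, large enough to make the extinction-type error $o(\rho_n^{m_n})$, the two hypotheses on $\beta_n$ are exactly what forces the remaining error $\frac{3c_nm_nJ_n^2}{2n}$ to be $o(\rho_n^{m_n})$ as well, and then $\frac{1}{m_n}\log P(T^{(n)}>m_n)=\frac{1}{m_n}\log\bigl(\rho_n^{m_n}(\mathrm{const}+o(1))\bigr)\to\log\lambda$. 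For part~(iii) I avoid the bounds of Theorem~\ref{main7}(iii), which at $c=1$ are not asymptotically sharp, and use instead the coupling behind its proof. With $\sigma$ the extinction time of the dominating branching process $Z^{(c_n)}$ and $\tau$ the decoupling time, \eqref{basic}--\eqref{basic1} give $P(\sigma>m_n)-P(\tau\le m_n)\le P(T^{(n)}>m_n)\le P(\sigma>m_n)$. Multiplying by $m_n$, the term $m_nP(\tau\le m_n)$ disappears: by \eqref{i1}, \eqref{diff}--\eqref{diff1} and Lemma~\ref{lemma1}(iii), $P(\tau\le m_n)\le\frac{3c_nm_nJ^2}{2n}+\frac{i_0}{J}$ for $c_n$ near $1$, which optimized at $J\asymp(n/m_n)^{1/3}$ is $O\bigl((m_n/n)^{1/3}\bigr)$, so $m_nP(\tau\le m_n)=O\bigl((m_n^4/n)^{1/3}\bigr)\to0$. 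It then remains to prove $m_nP(\sigma>m_n)\to2i_0$, which for the critical Poisson offspring law (variance $1$, $i_0$ initial particles) is the classical Kolmogorov/Kesten--Ney--Spitzer survival asymptotics recalled in the branching-process preliminaries of \cite{cks}; combining the two facts yields $m_nP(T^{(n)}>m_n)\to2i_0$.

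The step I expect to be the genuine obstacle is this last one. The process $Z^{(c_n)}$ is only \emph{asymptotically} critical, so $m_nP(\sigma>m_n)\to2i_0$ must hold uniformly in the (a priori unspecified) rate at which $c_n=np_n\to1$: if, for instance, $c_n>1$ with $m_n(c_n-1)\not\to 0$, the supercritical survival mass $1-\alpha_{c_n}^{i_0}\asymp c_n-1$ contributes a term of order $m_n(c_n-1)$ to $m_nP(\sigma>m_n)$. Closing the argument therefore seems to require either $np_n\equiv1$ (the textbook case), or a mild rate hypothesis on $np_n-1$, or a near-critical (Feller-diffusion) scaling estimate for $P(\sigma>m_n)$ showing that this contribution and the coupling error are simultaneously $o(1)$ under the stated hypotheses. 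Apart from this and the uniformity statements above, the remaining work is the routine bookkeeping of choosing $x_n$ and $J_n$ so that several polynomially- or geometrically-small quantities are all negligible at once.
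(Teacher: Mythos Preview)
Your overall strategy coincides with the paper's: feed Theorem~\ref{main7} the moving parameter $c_n=np_n$, choose the auxiliary quantities ($x$ in part~(i), $J$ in parts~(ii)--(iii)) as functions of $n$ so that all error terms are negligible relative to the Agresti bounds on $P(\sigma\le m)$, and pass to the limit using continuity of $\alpha_\mu$, $s(\mu)$, $r(\mu)$. For parts~(i) and~(ii) this is exactly what the paper does; the only cosmetic difference is that the paper handles the uniformity of the $W$--bound not by invoking compactness of \cite{kba}'s constants but by replacing $P(W^{(c_n)}_m\ge J)$ with $P(W^{(c)}_m\ge J_n)$ for a single fixed $c>\lambda$ (as in the proof of Corollary~\ref{th7}). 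For part~(iii) the paper likewise observes that the Agresti bounds of Lemma~\ref{tbounds} give only $m_nP(\sigma>m_n)\in[(e-1)i_0+o(1),\,2i_0+o(1)]$ and then appeals to Kolmogorov's estimate (Theorem~C in \cite{cks}) to identify the constant as $2i_0$.

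The issue you flag in your last paragraph is real, and the paper's proof does not resolve it either. Both arguments couple $X^{(n)}$ with the branching process $Z^{(c_n)}$, so the extinction time $\sigma=\sigma_n$ is that of a \emph{near}-critical process with mean $c_n=np_n\to1$, not an exactly critical one; yet Kolmogorov's estimate $mP(\sigma>m)\to2i_0$ is quoted only for $c=1$. Your diagnosis is correct: if, say, $c_n>1$ with $m_n(c_n-1)\not\to0$, the survival probability $1-\alpha_{c_n}^{i_0}\sim 2i_0(c_n-1)$ already makes $m_nP(\sigma_n>m_n)$ diverge, and no choice of $J_n$ saves the upper bound $P(T^{(n)}>m_n)\le P(\sigma_n>m_n)$. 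The statement as written therefore seems to require either the standing hypothesis $np_n\equiv1$, or an additional rate condition such as $m_n|np_n-1|\to0$, or a uniform near-critical survival estimate; the paper supplies none of these. On this point your proposal is at least as complete as the published argument, and more candid about the gap.
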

\begin{proof}
In each of the three cases ($\lambda>,<,=1$) the sequence $\beta_n$ is chosen in such a way that the term corresponding to $P(W_n\geq m)$
in the key estimate \eqref{i1} and consequently in the conclusions of Theorem~\ref{main7} is dominated by the bounds
for $P(\sigma\leq m)$ which are supplied by Lemma~\ref{tbounds}. As the result,
the term vanishes as $n$ goes to infinity, and the asymptotic behavior of $P\bigl(T^{(n)}>m_n\bigr)$ coincides with its counterpart for branching processes.
The only subtlety is in the proof of part (iii), where the asymptotic behavior exhibited by the lower and upper bounds in Lemma~\ref{tbounds} do not match,
namely $\lim_{n\to\infty} m\bigl\{1-\bigl(\frac{m}{m+2}\bigr)^{i_0}\bigr\}=2i_0$ while
$\lim_{n\to\infty} m\bigl\{1-\bigl(\frac{m}{m+e-1}\bigr)^{i_0}\bigr\}=(e-1)i_0.$ However,
Kolmogorov's estimate (see, for instance, Theorem~C in \cite{cks}) ensures that $\lim_{m\to\infty} P(\sigma>m)=2i_0.$ In view of \eqref{i1}
this yields the desired result in the case $\lambda=1.$
\end{proof}
\section{Deterministic approximation}
\label{dapprox}
The branching approximation reflects the dynamics of the process suitably when the latter is subcritical and the size
of avalanche decays exponentially fast, similarly to the branching approximation process.  In contrast, in the supercritical regime,
namely when Condition~\ref{assume5} is satisfied with $c>1$ or Assumption~\ref{assume7} holds with $\lambda>1,$ the results in Section~\ref{funda}
tend to provide a little or no information on the asymptotic behavior of the model, and even a partial improvement of this situation is clearly desirable.
However, even in the supercritical case, one should expect that with $\veps \ll 1,$ the branching approximation is adequate for $\sim \veps \log n$
of first steps while the ratio $\frac{X_k}{n}$ remains small, cf. Section~6.3.1 of \cite{dgbook}. On the other hand, once the ratio becomes of order one,
one can expect that the dynamics of the model will be almost deterministic and follow a ``mean-field" equation due to the law of large numbers.
The section is devoted to the study of this deterministic approximation of the model. The relevance of this regime to a supercrtical avalanche
model is formally elucidated by the results in Section~\ref{fixp}.
\par
The rest of the section is divided into three subsections.
Section~\ref{msns} discusses an heuristic computation based
on a complete decoupling of a week dependence between the network nodes, that shades an additional light on the nature of the deterministic approximation.
In Section~\ref{large} the deterministic dynamical system is formally introduced and studied. Interestingly enough, the underlying dynamics resembles
but is richer than that of the logistic equation. This was already observed in \cite{cooke, longini}. Section~\ref{comp} addresses the question for how long
the trajectories of an avalanche Markov chain and the corresponding deterministic dynamical system
remain close each to other, provided they began at the same point.
\subsection{An extra motivation and a precise comparison result}
\label{msns}
Our motivation in this section is partially coming from the following heuristic computation, which is an adaptation to our framework of the one proposed in \cite{g1}
for a model of avalanches in a Boolean network. Consider an avalanche Markov chain $(X_k)_{k\in\zz_+}$ with transition kernel introduced in \eqref{ker},
and recall $\cale_k$ from \eqref{wk} and $\xi_k(x)$ from \eqref{xik}.
If the random variables $\{\cale_k(x):x\in V_n\}$ were independent and identically distributed for some $k\in\zz_+,$
we could relate their common value $\xi_k$ to the common value $\xi_{k+1}$ of $\xi_{k+1}(x),$ $x\in V_n,$ by means of the following recursion \cite{g1}:
\beqn
\nonumber
\xi_{k+1}&=&\bigl(1-\xi_k\bigr)\sum_{i=0}^{n-1}\binom{n-1}{i}\xi_k^i(1-\xi_k)^{n-1-i}\bigl(1-q^{i}\bigr)
\\
\label{hr1}
&=&
(1-\xi_k)\bigl[1-(q\xi_k+1-\xi_k)^n\bigr]=(1-\xi_k)\bigl[1-(1-p\xi_k)^n\bigr].
\feqn
Consequently, the following inequality would hold:
\beqn
\label{h1}
\xi_{k+1}\geq (1-\xi_k)\bigl(1-e^{-pn\xi_k}\bigr)= (1-\xi_k)\bigl(1-e^{-c\xi_k}\bigr),
\feqn
where we denote $c=np.$ We remark that similar decoupling arguments are often used in a physics literature to justify a
mean-field approximation in a complex locally tree-like network (hence a fairly weak dependence between the nodes),
see, for instance, \cite{ca,b6,iva,a, a1}.
\par
Of course, $\cale_k(x)$ are not independent and in general are not identically distributed (the latter depends on whether the distribution of $X_0$
is exchangeable or not, cf. Proposition~\ref{mixing}). For a different, but somewhat related model of avalanches, it is argued in \cite{g1} (see the comment [4] in the References section there)
that the above i.\,i.\,d. assumption ``is true for large networks with well-behaved degree distributions, but excludes networks with
hubs which output to a substantial fraction of the nodes." It is of interest to note that, in agreement with this general heuristic principle,
a suitable modification of \eqref{hr1} and, consequently, of \eqref{h1} serve in a certain rigorous sense as a good approximation
for the avalanche model. This is accomplished in \eqref{vpsiphi} below.
\par
We will now proceed with a rigorous modification of the above heuristic calculation. For $\alpha>0$ let
\beq
g_\alpha(x)=(1-x)(1-e^{-\alpha x}),\qquad x\in [0,1],
\feq
and
\beq
\chi_\alpha=\max_{x\in(0,1)}g_\alpha(x),\quad \mbox{\rm and}\quad
\nu_\alpha=\underset{x\in(0,1)}{\mbox{\rm argmax}}~g_\alpha(x).
\feq
Note that $\nu_\alpha$ is uniquely defined for all $\alpha>0.$
Further, if $\alpha>1,$ let $\zeta_\alpha$ be the unique on $(0,1)$ solution to the fixed point equation
\beq
g_\alpha(\zeta_\alpha)=\zeta_\alpha, \qquad \zeta_\alpha\in(0,1).
\feq
Remark that $\zeta_\alpha=g_\alpha(\zeta_\alpha)<1-\zeta_\alpha$ implies that $\zeta_\alpha<1/2.$ The following lemma summarizes other basic
properties of the function $g_\alpha$ that we are going to use (see also Theorem~\ref{cooke} in Section~\ref{large} below).
The proof of the lemma is omitted, the dependence of the graph of $g_\alpha$ on the parameter $\alpha$ as well as
basic properties of $g_\alpha$ are illustrated in Fig.~\ref{fig1} below. For more details see,
for instance, \cite{cooke} where the function $g_\alpha$ is studied systematically in a similar context.
\begin{lemma}
\label{cri}
\item [(i)] $g_\alpha(x)$ is increasing on $(0,\nu_\alpha)$ and decreasing on $(\nu_\alpha,1).$
\item [(ii)] If $\alpha\leq 1,$ then $g_\alpha(x)<x$ for all $x\in (0,1).$
\item [(iii)] If $\alpha>1,$ then $g_\alpha(x)>x$ on $(0,\zeta_\alpha)$ and $g_\alpha(x)<x$ on $(\zeta_\alpha,1).$
\item [(iv)] $g_\alpha(x)<\alpha x$ for all $\alpha>0$ and $x\in (0,1).$
\item [(v)] There exists a transitional value $\alpha_{\rm tr}>1$ such that $\nu_\alpha>\zeta_\alpha$ if and only if $\alpha<\alpha_{\rm tr}.$
\end{lemma}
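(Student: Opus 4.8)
\noindent The plan is to reduce the whole lemma to the study of two one-variable functions, the derivative $g_\alpha'$ and the excess $\phi(x)=g_\alpha(x)-x$, both governed by the auxiliary function
\[
h(x)=e^{-\alpha x}\bigl(1+\alpha(1-x)\bigr),\qquad x\in[0,1],
\]
via $g_\alpha'(x)=h(x)-1$ and $\phi'(x)=h(x)-2$. Since $h'(x)=-\alpha e^{-\alpha x}\bigl(2+\alpha(1-x)\bigr)<0$ on $(0,1)$, $h$ is strictly decreasing there, from $h(0)=1+\alpha>1$ to $h(1)=e^{-\alpha}<1$. This single fact already gives (i): the equation $h=1$ has a unique root in $(0,1)$, which is therefore the unique critical point of $g_\alpha$, with $g_\alpha'>0$ to its left and $g_\alpha'<0$ to its right; in particular $\nu_\alpha$ is well-defined and equals that root. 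For (iv) I would simply use $1-e^{-t}<t$ ($t>0$) to get $g_\alpha(x)=(1-x)(1-e^{-\alpha x})<(1-x)\alpha x<\alpha x$ on $(0,1)$; when $\alpha\le 1$ the same chain yields $g_\alpha(x)<(1-x)x<x$, which is (ii).

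For (iii), let $\alpha>1$. Then $h(0)=1+\alpha>2$ and $h(1)=e^{-\alpha}<2$, so $\phi'=h-2$ vanishes at a unique $x_1\in(0,1)$, with $\phi$ increasing on $(0,x_1)$ and decreasing on $(x_1,1)$. Since $\phi(0)=0$ this forces $\phi>0$ on $(0,x_1]$, and then $\phi$ decreases to $\phi(1)=-1<0$, so there is a unique $\zeta_\alpha\in(x_1,1)$ with $\phi>0$ on $(0,\zeta_\alpha)$ and $\phi<0$ on $(\zeta_\alpha,1)$. This also establishes that $\zeta_\alpha$ is well-defined (and recovers $\zeta_\alpha<\tfrac12$, as in the remark preceding the lemma).

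The substantive part is (v). By (i) and (iii), for $\alpha>1$ one has $\nu_\alpha>\zeta_\alpha$ iff $g_\alpha$ is still increasing at $\zeta_\alpha$, i.e.\ $g_\alpha'(\zeta_\alpha)=h(\zeta_\alpha)-1>0$. Inserting the fixed-point identity $g_\alpha(\zeta_\alpha)=\zeta_\alpha$ in the form $e^{-\alpha\zeta_\alpha}=\frac{1-2\zeta_\alpha}{1-\zeta_\alpha}$ gives $h(\zeta_\alpha)=\frac{1-2\zeta_\alpha}{1-\zeta_\alpha}+\alpha(1-2\zeta_\alpha)$, so the condition becomes $\alpha>\frac{\zeta_\alpha}{(1-\zeta_\alpha)(1-2\zeta_\alpha)}$. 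The same identity also reads $\alpha=\frac{1}{\zeta_\alpha}\log\frac{1-\zeta_\alpha}{1-2\zeta_\alpha}$, so I would parametrize the comparison by $\zeta=\zeta_\alpha\in(0,\tfrac12)$ and substitute $u=\frac{1-\zeta}{1-2\zeta}\in(1,\infty)$ (an increasing change of variable, $\zeta=\frac{u-1}{2u-1}$); a short computation collapses the inequality to
\[
u\log u>(u-1)^2.
\]
Setting $\Phi(u)=u\log u-(u-1)^2$, one has $\Phi(1)=0$, $\Phi'(u)=\log u+3-2u$ with $\Phi'(1)=1>0$, and $\Phi''(u)=\frac{1}{u}-2<0$ for $u>1$; hence $\Phi'$ decreases from $1$ to $-\infty$, $\Phi$ rises then falls, so $\Phi>0$ on $(1,u^\ast)$ and $\Phi<0$ on $(u^\ast,\infty)$ for a unique $u^\ast>1$. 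Translating back, $\nu_\alpha>\zeta_\alpha$ holds precisely when $u(\zeta_\alpha)<u^\ast$, i.e.\ $\zeta_\alpha<\zeta^\ast:=\frac{u^\ast-1}{2u^\ast-1}$; and since $\zeta\mapsto\alpha(\zeta)$ is a continuous injection of $(0,\tfrac12)$ onto $(1,\infty)$ (injectivity being exactly the uniqueness of $\zeta_\alpha$ from (iii), with $\alpha(0^+)=1$ and $\alpha(\tfrac12^-)=\infty$), hence strictly increasing, this is in turn equivalent to $\alpha<\alpha_{\rm tr}:=\alpha(\zeta^\ast)$, with $\alpha_{\rm tr}>1$ because $\zeta^\ast>0$.

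I expect the main obstacle to be the bookkeeping in (v) rather than any single estimate: one must make sure that the change of variable $\zeta\leftrightarrow u$ together with the monotonicity of $\alpha\mapsto\zeta_\alpha$ turns the passage to the sign of $\Phi$ into a genuine equivalence, and that the two uses of $e^{-\alpha\zeta_\alpha}=\frac{1-2\zeta_\alpha}{1-\zeta_\alpha}$ (to eliminate the exponential in $h(\zeta_\alpha)$, and to express $\alpha$ through $\zeta_\alpha$) are kept mutually consistent. Once the statement is reduced to the sign pattern of $\Phi(u)=u\log u-(u-1)^2$ on $(1,\infty)$, the rest is the routine two-derivative argument above.
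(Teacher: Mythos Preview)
Your argument is correct and complete. Note, however, that the paper does not actually prove this lemma: it states explicitly that ``the proof of the lemma is omitted'' and refers the reader to Figure~1 and to \cite{cooke} for details. So there is no in-paper proof to compare against; you have supplied one where the authors chose not to.

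A brief remark on your part~(v), since that is where the work lies. Your reduction via the fixed-point identity $e^{-\alpha\zeta_\alpha}=\frac{1-2\zeta_\alpha}{1-\zeta_\alpha}$ and the substitution $u=\frac{1-\zeta}{1-2\zeta}$ is clean, and the resulting one-variable criterion $\Phi(u)=u\log u-(u-1)^2$ changing sign exactly once on $(1,\infty)$ is exactly the right object. The one point you flagged yourself as delicate---that the map $\zeta\mapsto\alpha(\zeta)=\zeta^{-1}\log\frac{1-\zeta}{1-2\zeta}$ is a strictly increasing bijection $(0,\tfrac12)\to(1,\infty)$---is handled correctly: injectivity is immediate from the uniqueness of $\zeta_\alpha$ in~(iii), continuity is clear, and the endpoint limits $\alpha(0^+)=1$, $\alpha(\tfrac12^-)=\infty$ force strict monotonicity. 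That closes the equivalence chain without gaps. At $\alpha=\alpha_{\rm tr}$ your argument yields $\Phi(u^\ast)=0$, hence $g_\alpha'(\zeta_\alpha)=0$ and $\nu_\alpha=\zeta_\alpha$, consistent with the strict inequality in the statement holding only for $\alpha<\alpha_{\rm tr}$.
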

Numerical simulations indicate that $\alpha_{\rm tr}\approx 2.46742.$ The role of the threshold in the dynamics of the model
is discussed in more detail in \cite{cooke} and \cite{duration}.

\begin{figure}[!ht]
\begin{tikzpicture}[scale=1.5]
\begin{axis}[my style, xtick={0,...,1}, ytick={0,...,1},
xmin=0, xmax=1.3, ymin=0, ymax=1.3]
\addplot[semithick][domain=0:1,samples=100]{(1-x)*(1-exp(-1.8 *x))};
\addplot[very thick][domain=0:1,samples=100]{(1-x)*(1-exp(-1 *x))};
\addplot[semithick][domain=0:1,samples=100]{(1-x)*(1-exp(-0.3*x))};
\addplot[semithick][domain=0:1,samples=100]{(1-x)*(1-exp(-5*x))};
\addplot[semithick][domain=0:1,samples=500]{(1-x)*(1-exp(-12*x))};
\addplot[very thick][domain=0:1,samples=100]{(1-x)*(1-exp(- 2.46742*x))};
\addplot[dashed][domain=0:1]{x};
\addplot[dashed][domain=0:1]{1-x};
\node at (0.43,0.26) {\tiny $\alpha=1.8$};
\node at (0.45,0.155) {\tiny $\alpha=1$};
\node at (0.51,0.029) {\tiny $\alpha=0.3$};
\node at (0.27,0.41) {\tiny $\alpha_{\rm tr}$};
\node at (0.18,0.575) {\tiny $\alpha=5$};
\node at (0.1,0.77) {\tiny $\alpha=12$};
\end{axis}
\end{tikzpicture}
\caption{Graph of the function $g_\alpha(x)=(1-x)(1-e^{-\alpha x})$
for several values of the parameter $\alpha,$ including the critical branching value $\alpha=1,$ for which $g_1'(0)=1,$ and
$\alpha=2.46742$ which is a close approximation to the transitional value $\alpha_{\rm tr}$. \label{fig1}}
\end{figure}
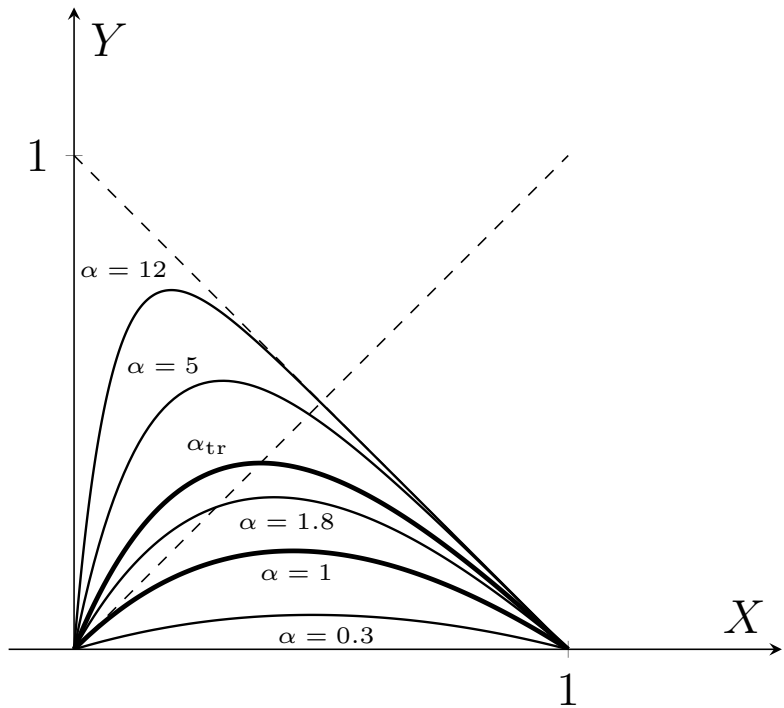

For an avalanche model with transition kernel \eqref{ker} and fixed network size $n\in\nn,$ let
\beqn
\label{vphi1}
\varphi_k=\frac{1}{n}E(X_k), \qquad k\in\zz_+.
\feqn
Note that if the distribution of $X_0$ is exchangeable (i.\,e. invariant with respect to permutation of the network nodes),
then $\varphi_k=\xi_k(x),$ where $\xi_k$ is defined in \eqref{xik}, for all $k\in\zz_+$ and $x\in V_n.$ By the bounded convergence theorem,
for any fixed $n\in\nn,$ $\lim_{k\to\infty} \varphi_k=0$ regardless of the choice of parameter $p>0.$
We have:
\begin{theorem}
\label{prop}
Let $(X_k)_{k\in\zz_+}$ be an avalanche model with transition kernel \eqref{ker}. Denote
\beqn
\label{al}
\alpha=-n\log(1-p).
\feqn
The following holds true:
\item[(i)]  Let $(\phi_k)_{k\in\zz_+}$ be defined recursively by setting $\phi_0=\frac{1}{n}E(X_0)$ and
\beq
\phi_{k+1}=1-e^{-\alpha \phi_k}.
\feq
Then $\varphi_k\leq \phi_k$ for all $k\in\zz_+.$
\item[(ii)] $\sup_{k\in\zz_+} \varphi_k\leq \max\bigl\{\phi_0,\chi_\alpha\bigr\}.$
\item [(iii)] Let $(\psi_k)_{k\in\zz_+}$ be defined recursively by setting $\psi_0=\frac{1}{n}E(X_0)$ and
\beq
\psi_{k+1}=g_\alpha (\psi_k).
\feq
If $\alpha\leq \alpha_{\rm tr}$ and
\beq
\varphi_0\leq \zeta_\alpha,
\feq
then
\beq
\varphi_k\leq \psi_k \qquad  \forall\,k\in\zz_+,
\feq
and, moreover,
\beq
\sup_{k\in\zz_+} \varphi_k\leq \zeta_\alpha \leq \chi_\alpha.
\feq
\end{theorem}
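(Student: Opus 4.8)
The plan is to derive all three parts from a single identity. Conditionally on $X_k$, the variable $X_{k+1}$ has the binomial distribution $\mathrm{Bin}\bigl(n-X_k,\,1-q^{X_k}\bigr)$ by \eqref{ker}, so $E(X_{k+1}\,|\,X_k)=(n-X_k)(1-q^{X_k})$. Since $q=e^{-\alpha/n}$ by \eqref{al}, dividing by $n$ and recalling $g_\alpha$ gives the exact relation $\tfrac1n E(X_{k+1}\,|\,X_k)=\bigl(1-\tfrac{X_k}{n}\bigr)\bigl(1-e^{-\alpha X_k/n}\bigr)=g_\alpha(X_k/n)$, whence, taking expectations, $\varphi_{k+1}=E\bigl[g_\alpha(X_k/n)\bigr]$ for every $k\in\zz_+$ (note $X_k/n\in[0,1)$, so every value lies in the interval on which $g_\alpha$ is studied). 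Everything below is read off from this relation, the only extra ingredient being that $g_\alpha$ is \emph{concave} on $[0,1]$ --- a fact not listed in Lemma~\ref{cri} but immediate from $g_\alpha''(x)=-\alpha e^{-\alpha x}\bigl[2+\alpha(1-x)\bigr]<0$ for $x\in[0,1]$.

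For part~(i) I would first bound $g_\alpha(x)=(1-x)(1-e^{-\alpha x})\le 1-e^{-\alpha x}$ pointwise, then apply Jensen's inequality to the concave, increasing function $x\mapsto 1-e^{-\alpha x}$: $\varphi_{k+1}\le E\bigl[1-e^{-\alpha X_k/n}\bigr]\le 1-e^{-\alpha\varphi_k}$. The bound $\varphi_k\le\phi_k$ then follows by induction on $k$, the base case being $\varphi_0=\phi_0$ and the inductive step using monotonicity of $x\mapsto 1-e^{-\alpha x}$ together with the recursion defining $\phi_k$. For part~(ii) it suffices to note that $g_\alpha(X_k/n)\le\chi_\alpha$ pointwise (the bound holds also at $0$, where $g_\alpha$ vanishes), so $\varphi_{k+1}\le\chi_\alpha$ for all $k\ge0$; hence $\varphi_k\le\chi_\alpha$ for $k\ge1$ and $\sup_{k\in\zz_+}\varphi_k\le\max\{\varphi_0,\chi_\alpha\}=\max\{\phi_0,\chi_\alpha\}$.

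For part~(iii) (where $\alpha\in(1,\alpha_{\rm tr}]$, so that $\zeta_\alpha$ is defined) I would apply Jensen directly to the concave function $g_\alpha$ itself to get $\varphi_{k+1}=E\bigl[g_\alpha(X_k/n)\bigr]\le g_\alpha(\varphi_k)$. Since $\alpha\le\alpha_{\rm tr}$, Lemma~\ref{cri}(v) yields $\zeta_\alpha\le\nu_\alpha$, so by Lemma~\ref{cri}(i) the map $g_\alpha$ is nondecreasing on $[0,\zeta_\alpha]$, and by Lemma~\ref{cri}(iii) $g_\alpha(\zeta_\alpha)=\zeta_\alpha$; thus $g_\alpha$ maps $[0,\zeta_\alpha]$ into itself. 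As $\psi_0=\varphi_0\le\zeta_\alpha$, induction gives $\psi_k\le\zeta_\alpha$ for all $k$. The comparison $\varphi_k\le\psi_k$ is then proved by induction as well: if $\varphi_k\le\psi_k\le\zeta_\alpha\le\nu_\alpha$, monotonicity of $g_\alpha$ on $[0,\nu_\alpha]$ gives $\varphi_{k+1}\le g_\alpha(\varphi_k)\le g_\alpha(\psi_k)=\psi_{k+1}$. Finally $\sup_k\varphi_k\le\sup_k\psi_k\le\zeta_\alpha$, and $\zeta_\alpha=g_\alpha(\zeta_\alpha)\le\chi_\alpha$.

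The computations are short, and the only delicate point is in part~(iii): because $g_\alpha$ is not globally monotone, both the forward-invariance of $[0,\zeta_\alpha]$ and the monotonicity needed to push the comparison through rely on $\zeta_\alpha\le\nu_\alpha$, i.e. precisely on the hypothesis $\alpha\le\alpha_{\rm tr}$; for $\alpha>\alpha_{\rm tr}$ one has $\nu_\alpha<\zeta_\alpha$ and the deterministic recursion $\psi_{k+1}=g_\alpha(\psi_k)$ need no longer dominate $\varphi_k$. I would also record the concavity of $g_\alpha$ on $[0,1]$ explicitly at the outset, since every application of Jensen above depends on it.
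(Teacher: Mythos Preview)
Your proof is correct and follows essentially the same route as the paper's: both derive the exact identity $\varphi_{k+1}=E\bigl[g_\alpha(X_k/n)\bigr]$ from the binomial conditional mean, apply Jensen's inequality using the concavity of $g_\alpha$ (the paper invokes this concavity without recording the second-derivative computation you give), and then carry out the same monotonicity-based inductions for parts~(i) and~(iii). The only cosmetic difference is that in part~(i) the paper first applies Jensen to $g_\alpha$ and then drops the factor $(1-\varphi_k)$, whereas you drop the factor $(1-x)$ pointwise first and then apply Jensen to $1-e^{-\alpha x}$; the resulting inequality is identical.
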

Note that in view of the fact that $1-e^{-\alpha \phi_k}<\alpha \phi_k$ and the result in part~(iv) of Lemma~\ref{cri}, the theorem improves
the result in the basic Corollary~\ref{avec} even in the subcritical case $\alpha<1.$ The relevance of the phase transition at $\alpha_{\rm tr}$
to the dynamics of the avalanche Markov chain is further discussed in Section~\ref{large} below, at the paragraph following Corollary~\ref{propc}.
\begin{proof}[Proof of Theorem~\ref{prop}]
\item [(i)] It follows from \eqref{ker}, \eqref{al}, and Jensen's inequality which is applied in the last step to the concave on $(0,1)$
function $g(x)=(1-x)(1-e^{-\alpha x})$ that
\beqn
\nonumber
\varphi_{k+1}(x)&=&E\Bigl[\Bigl(1-\frac{X_k}{n}\Bigr)\bigl(1-q^{X_k}\bigr)\Bigr]
=
E\Bigl[\Bigl(1-\frac{X_k}{n}\Bigr)\Bigl(1-\bigl(1-p\bigr)^{X_k}\Bigr)\Bigr]
\\
\nonumber
&=&
E\Bigl[\Bigl(1-\frac{X_k}{n}\Bigr)\Bigl(1-\Bigl\{\bigl(1-p\bigr)^{p}\Bigr\}^{\frac{X_k}{p}}\Bigr)\Bigr]
\\
\label{phi}
&=&
E\Bigl[\Bigl(1-\frac{X_k}{n}\Bigr)\Bigl(1-e^{-\frac{\alpha X_k}{n}}\Bigr)\Bigr]
\leq (1-\varphi_k)(1-e^{-\alpha \varphi_k}).
\feqn
In particular, $\varphi_{k+1}\leq  1-e^{-\alpha \varphi_k}.$ Since $f(x)=1-e^{-\alpha x}$ is a monotone increasing function,
an induction argument shows that $\varphi_k\leq \phi_k$ for all $k\in\zz_+.$
\\
$\mbox{}$
\item [(ii)] The claim is immediate from \eqref{phi}.
\\
$\mbox{}$
\item [(iii)] Observe that $g_\alpha$ is monotone increasing on $(0,\zeta_\alpha)$ for $\alpha\leq \alpha_{\rm tr}.$
Therefore, it follows from \eqref{phi} by using induction on $k,$ that $\varphi_k\leq \psi_k\leq \zeta_\alpha$ for all $k\in\zz_+.$
Moreover, if $\alpha\leq 1$ then $\psi(x)<x$ on $(0,\zeta_\alpha),$ and hence $\psi_{k+1}\leq \psi_k$ for all $k\in\zz_+,$ while
if $\alpha\in (1,\alpha_{\rm tr}]$ then $\psi(x)>x$ on $(0,\zeta_\alpha),$ and hence $\psi_{k+1}\geq \psi_k$ for all $k\in\zz_+.$
Since $g_\alpha$ has either one fixed point at zero (for $\alpha\leq 1$) or two at zero and $\zeta_\alpha$ (for $\alpha>1$),
this implies that $\lim_{k\to\infty} \varphi_k=0$ for $\alpha\leq 1$ while
$\sup_{k\in\zz_+} \varphi_k\leq   \sup_{k\in\zz_+} \psi_k\leq \lim_{k\to\infty} \varphi_k=\zeta_\alpha$ in the case that $\alpha \in (1,\alpha_{\rm tr}].$
\end{proof}
\subsection{Deterministic approximation for large size networks}
\label{large}
We turn now to a study of an ensemble of avalanche models which satisfies Assumption~\ref{assume7}. First we discuss a direct implication
of the results in Theorem~\ref{prop} for the asymptotic behavior of the expected fraction of excited nodes in a network of the ensemble.
The main results of this section are stated afterwards in Theorems~\ref{da} (regarding asymptotic behavior of $\frac{1}{n}\xn_k$ for large $n$) and
Corollary~\ref{h} (a consequence of the results in Theorem~\ref{da} for the heterogeneity of a network in the ensemble).
\par
First, observe that continuity of the results in Theorem~\ref{prop} in the parameters $\alpha$ and the initial data $\varphi_0$ together with the monotonicity
of $g_\alpha$ on the interval $\bigl(0,\zeta_\alpha\bigr)$ lead to the following corollary to the theorem.
\begin{corollary}
\label{propc}
Let Assumption~\ref{assume7} hold, and suppose that the following limit exists and belongs to $(0,1):$
\beq
\veps_0=\lim_{n\to\infty} \frac{1}{n}E\bigl(X^{(n)}_0\bigr).
\feq
For $n\in\nn$ and $k\in\zz_+$ let
\beqn
\label{vphi}
\varphi_{n,k}=\frac{1}{n}E\bigl(\xn_k\bigr).
\feqn
be the counterpart of $\varphi_k$ introduced for a single network in \eqref{vphi1}.
\par
Then the following holds true:
\item[(i)]  Let $(\phi_k)_{k\in\zz_+}$ be defined recursively by setting $\phi_0=\veps_0$ and
\beq
\phi_{k+1}=1-e^{-\lambda \phi_k}.
\feq
Then $\limsup_{n\to\infty}\varphi_{n,k}\leq \phi_k$ for all $k\in\zz_+.$
\item[(ii)] $\limsup_{n\to\infty} \varphi_{n,k}\leq \chi_\lambda$ for all $k\in\nn.$
\item [(iii)] Recall $\alpha_{\rm tr}$ from Lemma~\ref{cri}. Let $(\psi_k)_{k\in\zz_+}$ be defined recursively by setting
\beqn
\label{psi}
\psi_0=\veps_0\qquad \mbox{\rm and}\qquad \psi_{k+1}=g_\lambda (\psi_k).
\feqn
If $\lambda<\alpha_{\rm tr}$ and $\veps_0<\zeta_\lambda,$  then
\beq
\limsup_{n\to\infty}\varphi_{n,k}\leq \psi_k\qquad \mbox{\rm and}\qquad
\limsup_{n\to\infty} \varphi_{n,k}\leq \zeta_\lambda \leq \chi_\lambda
\feq
for all $k\in\zz_+.$
\end{corollary}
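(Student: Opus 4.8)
The plan is to apply Theorem~\ref{prop} to each network $\xn$ individually and then let $n\to\infty$, using that the auxiliary quantities $\chi_\alpha$ and $\zeta_\alpha$ attached to $g_\alpha$ depend continuously on $\alpha$. First I would record the two convergences that drive everything: setting $\alpha_n:=-n\log(1-p_n)$, the relation $np_n\to\lambda$ forces $p_n\to 0$, so $-n\log(1-p_n)=np_n\bigl(1+O(p_n)\bigr)$ and hence $\alpha_n\to\lambda$; and $\varphi_{n,0}=\tfrac1n E(\xn_0)\to\veps_0$ by hypothesis. For each fixed $n$, Theorem~\ref{prop} applies with network size $n$ and parameter $\alpha=\alpha_n$; denote by $\phi_{n,\cdot}$ and $\psi_{n,\cdot}$ the comparison sequences it produces, i.e. $\phi_{n,0}=\psi_{n,0}=\varphi_{n,0}$, $\phi_{n,k+1}=1-e^{-\alpha_n\phi_{n,k}}$, and $\psi_{n,k+1}=g_{\alpha_n}(\psi_{n,k})$.

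For part (i): Theorem~\ref{prop}(i) gives $\varphi_{n,k}\le\phi_{n,k}$ for all $k$. An induction on $k$ — base case $\varphi_{n,0}\to\veps_0=\phi_0$, inductive step using joint continuity of $(a,x)\mapsto 1-e^{-ax}$ together with $\alpha_n\to\lambda$ — shows $\lim_{n\to\infty}\phi_{n,k}=\phi_k$ for each fixed $k$, whence $\limsup_{n\to\infty}\varphi_{n,k}\le\phi_k$. For part (ii): \eqref{phi} in the proof of Theorem~\ref{prop} yields $\varphi_{n,k}\le g_{\alpha_n}(\varphi_{n,k-1})\le\chi_{\alpha_n}$ for $k\ge 1$; since $g_\alpha\to g_\lambda$ uniformly on $[0,1]$ as $\alpha\to\lambda$, we get $\chi_{\alpha_n}\to\chi_\lambda$, and therefore $\limsup_{n\to\infty}\varphi_{n,k}\le\chi_\lambda$.

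For part (iii), assume $\lambda<\alpha_{\rm tr}$ and $\veps_0<\zeta_\lambda$ (so in particular $\lambda>1$ and $\zeta_\lambda$ is defined). I would first establish that $\alpha\mapsto\zeta_\alpha$ is continuous near $\lambda$: along any sequence $\alpha_n\to\lambda$, the bounded sequence $\zeta_{\alpha_n}\in(0,1)$ has a subsequential limit $\zeta^*$, which by joint continuity satisfies $g_\lambda(\zeta^*)=\zeta^*$, hence $\zeta^*\in\{0,\zeta_\lambda\}$; the value $0$ is excluded since $\zeta_\alpha$ is increasing in $\alpha$ (because $g_\alpha(x)$ is) and so $\zeta_{\alpha_n}$ stays bounded away from $0$, forcing $\zeta^*=\zeta_\lambda$ and, as the limit is subsequence‑independent, $\zeta_{\alpha_n}\to\zeta_\lambda$. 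Since $\alpha_n\to\lambda<\alpha_{\rm tr}$ and $\zeta_{\alpha_n}\to\zeta_\lambda>\veps_0=\lim_n\varphi_{n,0}$, for all large $n$ we have $\alpha_n\le\alpha_{\rm tr}$ and $\varphi_{n,0}\le\zeta_{\alpha_n}$; for such $n$, Theorem~\ref{prop}(iii) gives $\varphi_{n,k}\le\psi_{n,k}$ for all $k$ and $\sup_k\varphi_{n,k}\le\zeta_{\alpha_n}$. The same induction as in part (i), now with $(\alpha,x)\mapsto g_\alpha(x)$, yields $\lim_{n\to\infty}\psi_{n,k}=\psi_k$, hence $\limsup_{n\to\infty}\varphi_{n,k}\le\psi_k$; and letting $n\to\infty$ in $\varphi_{n,k}\le\zeta_{\alpha_n}$ gives $\limsup_{n\to\infty}\varphi_{n,k}\le\zeta_\lambda$. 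Finally $\zeta_\lambda=g_\lambda(\zeta_\lambda)\le\max_{x\in(0,1)}g_\lambda(x)=\chi_\lambda$.

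The only genuinely non-routine point — the ``continuity of the results in Theorem~\ref{prop} in the parameters'' invoked in the statement — is the continuous dependence of $\chi_\alpha$ and $\zeta_\alpha$ on $\alpha$; the rest is bookkeeping that propagates convergence through the two recursions and combines the one‑network bounds of Theorem~\ref{prop} with elementary $\limsup$ inequalities. For $\chi_\alpha$ this follows from the uniform convergence of $g_\alpha$ in $\alpha$ on the compact interval $[0,1]$, and for $\zeta_\alpha$ from the subsequence/uniqueness argument above (alternatively from the implicit function theorem at the non‑degenerate fixed point, using Lemma~\ref{cri}(iii)); since these facts are needed only for $\alpha$ in a small neighborhood of $\lambda\in(1,\alpha_{\rm tr})$, where $g_\alpha$ is well behaved, no uniformity over all $\alpha$ is required.
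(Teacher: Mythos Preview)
Your proposal is correct and follows exactly the approach the paper indicates: the paper merely states that the corollary follows from ``continuity of the results in Theorem~\ref{prop} in the parameters $\alpha$ and the initial data $\varphi_0$ together with the monotonicity of $g_\alpha$ on the interval $(0,\zeta_\alpha)$,'' and your argument is a careful execution of precisely this idea, applying Theorem~\ref{prop} with $\alpha=\alpha_n=-n\log(1-p_n)\to\lambda$ and passing to the limit via continuity of the auxiliary objects $\phi_{n,k},\psi_{n,k},\chi_{\alpha_n},\zeta_{\alpha_n}$. Your subsequence argument for the continuity of $\alpha\mapsto\zeta_\alpha$ is a nice addition that the paper leaves implicit.
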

Let
\beqn
\label{xnk}
x_{n,k}=\frac{X^{(n)}_k}{n},\qquad n\in\nn,\,k\in\zz_+.
\feqn
Under Assumption~\ref{assume7}, define the asymptotic branching factor as a function $b:(0,1)\to\rr$ by setting
\beq
b(x)=\lim_{n\to\infty} \frac{1}{x}E(x_{n,k+1}\,|\,x_{n,k}=x)=\frac{g_\lambda(x)}{x},\qquad x\in (0,1).
\feq
It turns out (see \cite{cooke} for details) that the behavior of the sequence $b(\psi_k)$ for $\alpha\in(1,\alpha_{\rm tr})$
and $\alpha>\alpha_{\rm tr}$ differ qualitatively, that is a secondary phase transition in the avalanche model occurs
at the transitional value $\alpha_{\rm tr}.$ For instance, if $\alpha\in (1,\alpha_{\rm tr})$ and $\psi_0$ is sufficiently small, then $b(\psi_k)$ increases monotonically to one as $k\to\infty.$
In contrast, that's not necessarily true when $\alpha>\alpha_{\rm tr}.$ For example, in the case $\alpha>\alpha_{\rm tr},$ if for some $m\in\nn,$ $\psi_0<\nu_\alpha$ and $\psi_m=\nu_m,$
then the sequence $\psi_k$ increases monotonically at the first $m$ steps and then converging to one by oscillating consequently between values which are larger and smaller than one \cite[p.~81]{cooke}.
Note that by choosing $m$ sufficiently large, we can place $\psi_0$ as close to zero (the only fixed point of the equation $x=g_\alpha(x)$ other than $\zeta_\alpha$) as we wish.
\par
Let $\overset{P}{\to}$ denote convergence in probability as the size of the network $n$ goes to infinity.
The following theorem is an adaptation to our setup of Theorems~1 and~3 in \cite{bdet}
(see also \cite{knerman} for earlier similar results).
\begin{theorem}
\label{da}
Let Assumption~\ref{assume7} hold. Recall $x_{n,k}$ from \eqref{xnk} and suppose that
\beq
x_{n,0}\overset{P}{\to}\psi_0
\feq
for some constant $\psi_0\in (0,1).$ Then the following holds true:
\item [(a)]  $x_{n,k}\overset{P}{\to} \psi_k$ for all $k\in\zz_+,$ where $\psi_{k+1}=g_\lambda(\psi_k).$
\item [(b)] Let
\beq
y_{n,k}=\sqrt{n}(x_{n,k}-\psi_k),\qquad n\in\nn,\,k\in\zz_+
\feq
and set
\beq
v(x):=g_\lambda(x)e^{-\lambda x}=(1-x)e^{-\lambda x}(1-e^{-\lambda x}),\qquad x\in [0,1].
\feq
Suppose that in addition to \eqref{xnk},  $y_{n,0}$ converges weakly, as $n$ goes to infinity, to some (possibly random) $Y_0.$
Then the sequence $y^{(n)}:=(y_{n,k})_{k\in\zz_+}$ converges in distribution, as $n$ goes to infinity,
to a time-inhomogeneous Gaussian $AR(1)$ sequence $(Y_k)_{k\in\zz_+}$ defined by
\beqn
\label{ar1}
Y_{k+1}=g'_\lambda(\psi_k)Y_k+e_k,
\feqn
where $e_k,$ $k\in\zz_+,$ are  independent Gaussian variables, each $e_k$ distributed as $N\bigl(0,v(\psi_k)\bigr).$
\end{theorem}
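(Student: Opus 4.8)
The plan is to prove both parts by induction on the time index $k$, reducing everything to a one‑step analysis of the conditional law of $\xn_{k+1}$ given $\xn_k$. The key observation is that, by \eqref{ker1}, given $\xn_k=i$ the variable $\xn_{k+1}$ is binomial with parameters $n-i$ and $1-q_n^i$; since $q_n^i=e^{-\lambda_ni/n}$ with $\lambda_n:=-n\log(1-p_n)$, one has $\lambda_n\to\lambda$ (because $\lambda_n=np_n+O(1/n)$), and, writing $\calf^{(n)}_k=\sigma(\xn_0,\dots,\xn_k)$,
\[
E\bigl(x_{n,k+1}\mid\calf^{(n)}_k\bigr)=g_{\lambda_n}(x_{n,k}),\qquad
\mathrm{Var}\bigl(x_{n,k+1}\mid\calf^{(n)}_k\bigr)=\tfrac1n\,v_n(x_{n,k}),
\]
where $v_n(x)=(1-x)(1-q_n^{nx})q_n^{nx}$ satisfies $v_n\le\tfrac14$ and $v_n\to v$ uniformly on $[0,1]$. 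Since $g_\lambda$ maps $(0,1)$ into itself, $\psi_k\in(0,1)$ for every $k$, so on the relevant scale $x_{n,k}$ stays inside a fixed compact subinterval of $(0,1)$ on which $g_\lambda$, $g'_\lambda$, $g''_\lambda$ and $v$ are controlled. This is exactly the framework of Theorems~1 and~3 of \cite{bdet} (cf.\ also \cite{knerman}), with the binomial step playing the role of a density‑dependent step with slowly varying rate $\lambda_n\to\lambda$, and I would largely import their argument.

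For part (a) I induct on $k$, the base case being the hypothesis $x_{n,0}\overset{P}{\to}\psi_0$. Assuming $x_{n,k}\overset{P}{\to}\psi_k$, I split $x_{n,k+1}=g_{\lambda_n}(x_{n,k})+\bigl(x_{n,k+1}-g_{\lambda_n}(x_{n,k})\bigr)$: the second summand is centered with variance $\le 1/(4n)$, hence tends to $0$ in $L^2$, while the first tends to $g_\lambda(\psi_k)=\psi_{k+1}$ in probability by the continuous mapping theorem, using $\lambda_n\to\lambda$ and joint continuity of $(\alpha,x)\mapsto g_\alpha(x)$. Hence $x_{n,k+1}\overset{P}{\to}\psi_{k+1}$. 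This also gives $v_n(x_{n,k})\overset{P}{\to}v(\psi_k)$.

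For part (b) I set $M_{n,k}:=x_{n,k+1}-g_{\lambda_n}(x_{n,k})$, a martingale‑difference array with $E[(\sqrt n M_{n,k})^2]\le\tfrac14$, and induct on $k$ with inductive hypothesis $(y_{n,0},\dots,y_{n,k})\Rightarrow(Y_0,\dots,Y_k)$ (in particular $y_{n,k}=O_P(1)$). A first‑order Taylor expansion of $g_{\lambda_n}$ at $\psi_k$ gives $y_{n,k+1}=g'_\lambda(\psi_k)\,y_{n,k}+\sqrt n\,M_{n,k}+R_{n,k}$, where $R_{n,k}$ collects: the quadratic Taylor remainder, of order $y_{n,k}^2/\sqrt n=O_P(1/\sqrt n)$; the term $(g'_{\lambda_n}(\psi_k)-g'_\lambda(\psi_k))y_{n,k}=o_P(1)$; and the deterministic bias $\sqrt n\bigl(g_{\lambda_n}(\psi_k)-g_\lambda(\psi_k)\bigr)$, which is of order $\sqrt n\,|np_n-\lambda|+O(1/\sqrt n)$ since $\lambda_n-\lambda=(np_n-\lambda)+O(1/n)$, hence negligible under the standing assumption once $np_n-\lambda=o(1/\sqrt n)$, which we assume (it is automatic when $np_n\equiv\lambda$, and a nonzero limit would merely add a deterministic shift to \eqref{ar1}). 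Conditionally on $\calf^{(n)}_k$, $\sqrt n\,M_{n,k}$ is $n^{-1/2}$ times a sum of $n-\xn_k$ i.i.d.\ centered Bernoulli$(1-q_n^{\xn_k})$ variables with conditional variance $v_n(x_{n,k})\overset{P}{\to}v(\psi_k)$; a Berry--Esseen estimate for binomials then yields $E\bigl(e^{\mathrm{i}t\sqrt n M_{n,k}}\mid\calf^{(n)}_k\bigr)\to e^{-t^2v(\psi_k)/2}$ in probability, uniformly on bounded $t$‑sets. Feeding this into the induction on the joint characteristic function of $(y_{n,0},\dots,y_{n,k+1})$ identifies the limit: $e_k:=\lim\sqrt n\,M_{n,k}$ is $N(0,v(\psi_k))$ and, its conditional law given $\calf^{(n)}_k$ being asymptotically non‑random, independent of $(Y_0,\dots,Y_k)$. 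Since weak convergence in $\rr^{\zz_+}$ with the product topology is just convergence of all finite‑dimensional marginals, this gives $y^{(n)}\Rightarrow(Y_k)$ with $(Y_k)$ as in \eqref{ar1}.

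The hard part will be the conditional CLT for $\sqrt n\,M_{n,k}$: one needs the convergence of the conditional characteristic function to hold uniformly enough in the random conditioning that it survives integration against the (converging) law of the past, and this is precisely where the Berry--Esseen bound and the concentration of $x_{n,k}$ on a fixed compact subinterval of $(0,1)$ (from part (a), keeping the rare degenerate events $\{\xn_k=0\}$ and $\{\xn_k=n\}$ asymptotically negligible) are used. Everything else — the $L^2$ estimate in part (a), the Taylor remainders, the tightness induction for $y_{n,k}$, and the bias bound — is routine once Assumption~\ref{assume7} is in force.
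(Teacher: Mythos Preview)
Your argument is correct in outline but follows a genuinely different route from the paper. Instead of carrying out the induction, Taylor expansion, and conditional Berry--Esseen step directly on $\xn$, the paper introduces an auxiliary chain $\witi X^{(n)}$ built from Bernoulli variables with success probability $1-e^{-\lambda j/n}$ (i.e., with the limiting parameter $\lambda$ in place of $\lambda_n=-n\log(1-p_n)$), to which Theorems~1 and~3 of \cite{bdet} apply verbatim; it then transfers both the LLN and the CLT to $\xn$ by a maximal coupling of the two Bernoulli families, bounding $E\bigl(\tfrac1n|\xn_{k+1}-\witi X^{(n)}_{k+1}|\bigr)\le E\bigl(|e^{-\lambda\xn_k/n}-(1-p_n)^{\xn_k}|\bigr)\to 0$ via bounded convergence, together with an appeal to \cite{karr} (through \cite{bdet}) to reduce weak convergence of the process to convergence of transition kernels. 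Your approach is more self-contained---you are in effect re-deriving the cited theorems in this concrete binomial setting---and makes the mechanism (conditional variance $v_n(x_{n,k})\to v(\psi_k)$, martingale-difference CLT) fully transparent; the paper's is shorter because it outsources the CLT machinery to the citation. One point worth flagging: the rate hypothesis $np_n-\lambda=o(1/\sqrt n)$ that you add to kill the deterministic bias $\sqrt n\bigl(g_{\lambda_n}(\psi_k)-g_\lambda(\psi_k)\bigr)$ in part~(b) is a genuine requirement that Assumption~\ref{assume7} alone does not supply, and the paper's proof does not address it explicitly either (its coupling bound controls the difference only at scale $\tfrac1n$, not $\tfrac1{\sqrt n}$). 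So you have correctly identified a subtlety that both arguments share rather than introduced a new gap.
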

\begin{proof}
Let $\mz:=\bigl\{\mz_{k,i}^{(n,j)}:n,i,j\in\nn,k\in\zz_+\bigr\}$
be a collection of independent Bernoulli variables with
\beq
P\bigl(\mz_{k,i}^{(n,j)}=1\bigr)=1-q_n^j\qquad \mbox{\rm and}\qquad P\bigl(\mz_{k,i}^{(n,j)}=1\bigr)=q_n^j.
\feq
Thus, without loss of generality, we can assume that
\beq
\xn_{k+1}=\sum_{i=1}^{n-\xn_k} \mz_{k,i}^{(n,\xn_k)}.
\feq
Let $\my:=\bigl\{\my_{k,i}^{(n,j)}:n,i,j\in\nn,k\in\zz_+\bigr\}$ be another collection of independent Bernoulli variables
defined on the same probability space, and such that
\beq
P\bigl(\my_{k,i}^{(n,j)}=1\bigr)=1-e^{-\frac{\lambda j}{n}}\qquad \mbox{\rm and}\qquad P\bigl(\my_{k,i}^{(n,j)}=1\bigr)=e^{-\frac{\lambda j}{n}}.
\feq
For $n\in\nn,$ let $\alpha_n=np_n.$ By using the maximal coupling for two Bernoulli variables, we can and will assume that the
pairs $\bigl(\mz_{k,i}^{(n,j)},\my_{k,i}^{(n,j)}\bigr)$ are
independent $\{0,1\}^2$-random variables, and
\beq
P\bigl(\mz_{k,i}\neq \my_{k,i}\bigr)=\Bigl|e^{-\frac{\lambda j}{ n}}-\Bigl(1-\frac{\alpha_n}{n}\Bigr)^j\Bigr|.
\feq
For $n\in\nn,$ define a new sequence ${\witi X}^{(n)}=\bigl({\witi X}^{(n)}_k\bigr)_{k\in\zz_+}$ by setting ${\witi X}^{(n)}_0=\psi_0$
and
\beq
{\witi X}^{(n)}_{k+1}=\sum_{i=1}^{n-{\witi X}^{(n)}_k} \my_{k,i}^{(n,{\witi X}^{(n)}_k)}.
\feq
Theorems~1 and~3 in \cite{bdet} ensures that the results in the theorem, both the LLN and CLT, hold if we replace $\xn$ with ${\witi X}^{(n)}.$
Thus, in order to prove the theorem, it suffices to show that $\bigl(\xn_k-{\witi X}_k^{(n)}\bigr)\overset{P}{\to} 0$ for all $k\in \zz_+$
(see, for instance, Remark~(i) in \cite[p.~60]{bdet} and/or the last paragraph in the proof of Theorem~3 there, which
both assert that the main result of \cite{karr} goes through to a non-homogeneous chain setting,
and therefore the weak convergence in question is implied by the convergence of transition kernels). To this end, observe that
\beq
E\Bigl( \frac{1}{n}\Bigl|\xn_{k+1}-{\witi X}_{k+1}^{(n)}\Bigr|\Bigr)
&\leq&
\frac{1}{n}E\Bigl(\sum_{i=1}^{n-\xn_k} \Bigl|\mz_{k,i}^{(n,\xn_k)}-\my_{k,i}^{(n,\xn_k)}\Bigr|\Bigr)
\leq E\Bigl(\Bigl|\mz_{k,1}^{(n,\xn_k)}-\my_{k,1}^{(n,\xn_k)}\Bigr|\Bigr)
\\
&=&
P\Bigl(\mz_{k,1}^{(n,\xn_k)}\neq \my_{k,1}^{(n,\xn_k)}\Bigr)
=E\Bigl(\Bigl|e^{-\frac{\lambda \xn_k}{ n}}-\Bigl(1-\frac{\alpha_n}{n}\Bigr)^{\xn_k}\Bigr|\Bigr),
\feq
and hence the claim can be proved by induction, using the bounded convergence theorem.
\end{proof}
Part (a) of the above result and the bounded convergence theorem imply that
\beqn
\label{vpsiphi}
\lim_{n\to\infty} \varphi_{n,k}=\psi_k\qquad \forall\,k\in\zz_+,
\feqn
where $\varphi_{n,k}$ is defined in \eqref{vphi1}. Note that this limit identity is
a reminiscent of the heuristic \eqref{hr1} and \eqref{h1} in our framework.
\par
It is not hard to prove (cf. Remark (v) on p.~61 of \cite{bdet}, see also \cite{knerman})
that if $\psi_0\overset{P}{\to} \zeta_\lambda$ in the statement of Theorem~\ref{da}
and, in addition, $y_{n,0}$ converges weakly, as $n$ goes to infinity, to some $Y_0,$ then the linear recursion \eqref{ar1} can be replaced with
\beq
Y_{k+1}=g'_\lambda(\zeta_\lambda)Y_k+\witi e_k,
\feq
where $\witi e_k,$ $k\in\zz_+,$ are i.\,i.\,d. Gaussian variables, each $\witi e_k$ distributed as $N\bigl(0,v(\zeta_\lambda)\bigr).$
One then can show (see the proof of Theorem~\ref{oth} below) that $|g_\lambda'(\zeta_\lambda)|<1,$ and hence Markov chain $Y_k$ has
a stationary distribution, see \cite[p.~61]{bdet} for more details.
\par
Recall $H_k$ from \eqref{het} and define a normalized heterogeneity $h_{n,k}$ by
\beq
h_{n,k}=\frac{H_{n,k}}{2n(n-1)},\qquad n\in\nn,k\in\zz_+.
\feq
Notice that if the distribution of $\xn_0$ is exchangeable, then $x_{n,k}$ is a probability that two nodes in the network generating $\xn_k$
randomly chosen at time $k$ have different types. The following is immediate from Theorem~\ref{da}.
\begin{corollary}
\label{h}
Under the conditions of Theorem~\ref{da}, the following holds true:
\item [(a)] For $x\in [0,1]$ let $r(x)=\frac{1}{2}x(1-x).$  Then $h_{n,k}\overset{P}{\to} r(\psi_k)$ for all $k\in\zz_+.$
\item [(b)] Let
\beq
{\witi y}_{n,k}=\sqrt{n}\bigl(h_{n,k}-r(\psi_k)\bigr),\qquad n\in\nn,\,k\in\zz_+.
\feq
Suppose that in addition to \eqref{xnk},  $y_{n,0}=\sqrt{n}\bigl(x_{n,0}-\psi_0\bigr)$ converges weakly, as $n$ goes to infinity, to some (possibly random) $Y_0.$
Then the sequence ${\witi y}^{(n)}:=({\witi y}_{n,k})_{k\in\zz_+}$ converges in distribution, as $n$ goes to infinity,
to a time-inhomogeneous Gaussian $AR(1)$ sequence $(\witi Y_k)_{k\in\zz_+},$ where $\witi Y_k=\frac{1}{2}(1-2\psi_k)Y_k$ and $Y_k$ is defined in \eqref{ar1}.
\end{corollary}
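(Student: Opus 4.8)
The plan is to deduce Corollary~\ref{h} directly from Theorem~\ref{da} by writing $h_{n,k}$ as a smooth (indeed polynomial) function of $x_{n,k}$ and then applying, respectively, the continuous mapping theorem for part~(a) and a delta-method expansion for part~(b). First, since $H_{n,k}=X^{(n)}_k(n-X^{(n)}_k)=n^2x_{n,k}(1-x_{n,k})$, one has the exact identity
\[
h_{n,k}=\frac{H_{n,k}}{2n(n-1)}=\frac{n}{2(n-1)}\,x_{n,k}(1-x_{n,k}).
\]
For part~(a) I would invoke Theorem~\ref{da}(a), namely $x_{n,k}\overset{P}{\to}\psi_k$, and the continuous mapping theorem to get $x_{n,k}(1-x_{n,k})\overset{P}{\to}\psi_k(1-\psi_k)$; since the prefactor $\tfrac{n}{2(n-1)}\to\tfrac12$, a Slutsky argument gives $h_{n,k}\overset{P}{\to}\tfrac12\psi_k(1-\psi_k)=r(\psi_k)$, which is the claim.

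For part~(b), the natural route is to substitute $X^{(n)}_k=n\psi_k+\sqrt{n}\,y_{n,k}$ into $H_{n,k}$ and collect powers of $n$, obtaining the exact expansion
\[
H_{n,k}=n^2\psi_k(1-\psi_k)+n^{3/2}(1-2\psi_k)\,y_{n,k}-n\,y_{n,k}^2 .
\]
Dividing by $2n(n-1)$, subtracting $r(\psi_k)=\tfrac12\psi_k(1-\psi_k)$ (the $n^2$ term cancels against $n(n-1)r(\psi_k)$ up to an $O(1/n)$ remainder), and multiplying by $\sqrt{n}$, I would arrive at
\[
\witi y_{n,k}=\frac{n}{2(n-1)}(1-2\psi_k)\,y_{n,k}+\frac{\sqrt{n}}{2(n-1)}\psi_k(1-\psi_k)-\frac{\sqrt{n}}{2(n-1)}\,y_{n,k}^2 .
\]
The middle term is deterministic and vanishes; the last term is $o_P(1)$ because $y_{n,k}$ converges in distribution by Theorem~\ref{da}(b) (hence $y_{n,k}^2=O_P(1)$) while $\sqrt{n}/(n-1)\to0$; and the prefactor of the first term tends to $\tfrac12$. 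Thus $\witi y_{n,k}=\tfrac12(1-2\psi_k)\,y_{n,k}+o_P(1)$. Since $r'(x)=\tfrac12-x=\tfrac12(1-2x)$, the limit $\witi Y_k=\tfrac12(1-2\psi_k)Y_k$ is exactly $r'(\psi_k)Y_k$, as asserted. To pass from coordinatewise statements to the stated convergence of the whole sequence $\witi y^{(n)}=(\witi y_{n,k})_{k\in\zz_+}$, I would note that the offsets $\psi_k$ are deterministic, so the displayed relation holds simultaneously for all $k$, and then apply the continuous mapping theorem to the deterministic coordinatewise-linear map $(y_k)_k\mapsto(\tfrac12(1-2\psi_k)y_k)_k$ on finite-dimensional projections, using Theorem~\ref{da}(b) for $y^{(n)}\Rightarrow(Y_k)_{k\in\zz_+}$ and Slutsky to absorb the $o_P(1)$ corrections.

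I do not expect a genuine obstacle here: because $r$ is quadratic the delta-method "remainder" is a single explicit term, so no uniform Taylor estimate or moment bound beyond tightness of $y_{n,k}$ is needed. The only point requiring mild care is that the weak convergence in part~(b) must be understood jointly in $k$ rather than marginally, but this is immediate once one observes that Theorem~\ref{da}(b) already furnishes the joint weak convergence of $y^{(n)}$ and that the transformation relating $y_{n,k}$ to $\witi y_{n,k}$ is, up to asymptotically negligible terms, multiplication by deterministic scalars — so the continuous mapping theorem applies with no additional work.
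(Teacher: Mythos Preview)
Your proposal is correct and follows essentially the same route as the paper: part~(a) by the continuous mapping theorem applied to $x_{n,k}\overset{P}{\to}\psi_k$, and part~(b) by a delta-method expansion of the quadratic $r(x)=\tfrac12 x(1-x)$ combined with Slutsky and the continuous mapping theorem. The only cosmetic difference is that the paper separates out the $\tfrac{1}{n(n-1)}-\tfrac{1}{n^2}$ correction first and then invokes the mean value theorem for $r(x_{n,k})-r(\psi_k)$, whereas you substitute $x_{n,k}=\psi_k+n^{-1/2}y_{n,k}$ and expand exactly; both arrive at $\witi y_{n,k}=\tfrac12(1-2\psi_k)y_{n,k}+o_P(1)$.
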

\begin{proof}
The convergence in probability of $h_{n,k}$ follows from the continuous mapping theorem and the result in part (a) of Theorem~\ref{da}.
To prove the weak convergence of ${\witi y}_{n,k},$ write
\beq
{\witi y}_{n,k}=\sqrt{n}\bigl(r(x_{n,k})-r(\psi_k)\bigr)+\frac{\sqrt{n}}{2}X_{n,k}(n-X_{n,k})\Bigl(\frac{1}{n(n-1)}-\frac{1}{n^2}\Bigr)
\feq
and observe that
\beq
\Bigl|\frac{\sqrt{n}}{2}X_{n,k}(n-X_{n,k})\Bigl(\frac{1}{n(n-1)}-\frac{1}{n^2}\Bigr)\Bigr|\leq \frac{\sqrt{n}}{2(n-1)} \longrightarrow_{n\to\infty} 0.
\feq
Furthermore, by the mean value theorem,
\beq
\sqrt{n}\bigl(r(x_{n,k})-r(\psi_k)\bigr)=r'(x^*_{n,k})\sqrt{n}(x_{n,k}-\psi_k)=(1-2x_{n,k})\sqrt{n}(x_{n,k}-\psi_k)
\feq
for some $x^*_{n,k}$ between $x_{n,k}$ and $\psi_k.$ In view of the result in part (b) of Theorem~\ref{da}, the claim in part (b) of this theorem
follows now by another application of the continuous mapping theorem.
\end{proof}
The discrete-time dynamical system $\psi_k$ is studied in details in \cite{cooke}.
In particular, the following  result is proved there (Theorem~1 in \cite{cooke}):
\begin{theorem}[\cite{cooke}]
\label{cooke}
Let $\lambda>0$ and $\veps_0\in(0,1)$ be given, and the sequence $(\psi_k)_{k\in\zz_+}$ is defined as in \eqref{psi}. Then the following holds true:
\item [(i)] If $\lambda\in (0,1],$ then $\lim_{k\to\infty} \psi_k=0$ for any $\psi_0\in [0,1].$
\item [(ii)] If $\lambda>1,$ then for all $\psi_0\in [0,1],$ $\lim_{k\to\infty} \psi_k=\zeta_\lambda,$ where $\zeta_\lambda\in (0,1/2)$ is  the unique positive solution to
the fixed point equation $g_\lambda(x)=x.$
\end{theorem}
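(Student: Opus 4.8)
This is a one‑dimensional statement about the iteration of $g_\lambda(x)=(1-x)(1-e^{-\lambda x})$, and I would prove it by elementary real analysis resting on the properties of $g_\lambda$ collected in Lemma~\ref{cri}. I would record three standing facts first. (1) $g_\lambda$ is strictly concave on $[0,1]$, since $g_\lambda''(x)=-\lambda e^{-\lambda x}\bigl(2+\lambda(1-x)\bigr)<0$ (the same concavity already exploited in the proof of Theorem~\ref{prop}); moreover $g_\lambda(0)=g_\lambda(1)=0$, $g_\lambda'(0)=\lambda$, it has a single critical point $\nu_\lambda$, maps $[0,1]$ into $[0,\chi_\lambda]\subseteq[0,1)$, and maps $(0,1)$ into $(0,1)$ (both factors lie in $(0,1)$ there), so the orbit of $\psi_0=\veps_0\in(0,1)$ stays in $(0,1)$ forever. (2) The fixed points of $g_\lambda$ in $[0,1]$ are $0$ if $\lambda\le1$, and $0$ and $\zeta_\lambda$ if $\lambda>1$ (Lemma~\ref{cri}(ii),(iii)); and since $g_\lambda$ is concave with $g_\lambda(0)=0$, the tangent line at $\zeta_\lambda$ stays above the graph, which evaluated at $x=0$ forces $g_\lambda'(\zeta_\lambda)<g_\lambda(\zeta_\lambda)/\zeta_\lambda=1$. (3) When $\lambda>1$, the identity $g_\lambda(\nu_\lambda)=\chi_\lambda$ together with Lemma~\ref{cri}(iii),(v) gives the dichotomy $\chi_\lambda\le\nu_\lambda\iff\lambda\le\alpha_{\rm tr}$, which decides whether $\zeta_\lambda$ lies on the increasing or on the decreasing branch of $g_\lambda$.

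For part (i), $\lambda\le1$: Lemma~\ref{cri}(ii) gives $\psi_{k+1}=g_\lambda(\psi_k)<\psi_k$, so the orbit is strictly decreasing, bounded below by $0$, and converges to a fixed point in $[0,1)$, necessarily $0$ (the boundary values $\psi_0\in\{0,1\}$ also yield $\psi_k=0$ for $k\ge1$). The same monotone scheme disposes of the transitional subcase of part (ii): if $1<\lambda\le\alpha_{\rm tr}$, then $\chi_\lambda\le\nu_\lambda$, so after one step the orbit lies in the forward‑invariant interval $[0,\chi_\lambda]\subseteq[0,\nu_\lambda]$, on which $g_\lambda$ is monotone increasing (Lemma~\ref{cri}(i)). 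For a continuous increasing self‑map of an interval every orbit is monotone; Lemma~\ref{cri}(iii) then shows that an orbit in $(0,\zeta_\lambda)$ stays there and increases to $\zeta_\lambda$, while one in $(\zeta_\lambda,\chi_\lambda]$ stays in $[\zeta_\lambda,\chi_\lambda]$ and decreases to $\zeta_\lambda$; as $\psi_1\in(0,\chi_\lambda]$ in all cases, $\psi_k\to\zeta_\lambda$, and no local analysis of $\zeta_\lambda$ is required here.

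The remaining case $\lambda>\alpha_{\rm tr}$ (so $\nu_\lambda<\zeta_\lambda$, $\zeta_\lambda$ sits on the decreasing branch, and orbits oscillate around it) is the real content. I would first settle the local picture: $g_\lambda'(0)=\lambda>1$ makes $0$ repelling, and — the key computation, the one carried out in the proof of Theorem~\ref{oth} — using $e^{-\lambda\zeta_\lambda}=(1-2\zeta_\lambda)/(1-\zeta_\lambda)$ one verifies the transcendental inequality $g_\lambda'(\zeta_\lambda)>-1$; together with $g_\lambda'(\zeta_\lambda)<1$ from concavity this makes $\zeta_\lambda$ locally attracting. I would then upgrade to global attraction on $(0,1)$: the orbit never reaches $0$, cannot converge to $0$ (once $\psi_k<\zeta_\lambda$ one has $\psi_{k+1}>\psi_k$), and is eventually trapped in a compact subinterval of $(0,1)$ bounded away from both endpoints (for instance $[g_\lambda(\chi_\lambda),\chi_\lambda]$, which one checks is forward invariant). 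To conclude $\psi_k\to\zeta_\lambda$ I would invoke Singer's theorem: a short computation shows that $g_\lambda$ is $C^3$ on $(0,1)$ with a single critical point and negative Schwarzian derivative, hence it has a unique attracting cycle, which must be $\zeta_\lambda$; since no preimage of $0$ lies in $(0,1)$ and there are no other periodic orbits on the trapping interval, every orbit in $(0,1)$ converges to $\zeta_\lambda$. A more hands‑on alternative avoids Singer's theorem: show that $g_\lambda\circ g_\lambda$ has no fixed point in $(0,1)$ besides $0$ and $\zeta_\lambda$ (i.e. $g_\lambda$ admits no $2$‑cycle), using concavity of $g_\lambda$, the sign pattern $g_\lambda(x)\gtrless x$, and $|g_\lambda'(\zeta_\lambda)|<1$ to control the zeros of $g_\lambda\circ g_\lambda-\mathrm{id}$, and conclude that the second iterate is eventually a contraction toward $\zeta_\lambda$ on the trapping interval.

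The routine ingredients are the monotone‑orbit bookkeeping, the fixed‑point count, forward invariance of the trapping intervals, and the Schwarzian computation. The main obstacle lies entirely in the oscillatory supercritical regime $\lambda>\alpha_{\rm tr}$: establishing $g_\lambda'(\zeta_\lambda)>-1$ for every $\lambda>1$ — which is exactly what forbids a period‑doubling bifurcation and hence the appearance of attracting $2$‑cycles — and then assembling this with the negative‑Schwarzian structure to convert local attraction of $\zeta_\lambda$ into global attraction on $(0,1)$.
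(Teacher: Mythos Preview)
The paper does not contain its own proof of this theorem: it is explicitly quoted from \cite{cooke} (the sentence immediately preceding the statement reads ``The following result is proved there (Theorem~1 in \cite{cooke})''), so there is no in-paper argument to compare your proposal against.

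Taken on its own, your sketch is sound in the monotone regimes. Part~(i) and the subcase $1<\lambda\le\alpha_{\rm tr}$ of part~(ii) are handled correctly by the standard monotone-orbit bookkeeping you describe, and your use of Lemma~\ref{cri} is accurate; in particular the equivalence $\chi_\lambda\le\nu_\lambda\iff\lambda\le\alpha_{\rm tr}$ is right (since $\nu_\lambda\ge\zeta_\lambda$ forces $\chi_\lambda=g_\lambda(\nu_\lambda)\le\nu_\lambda$ by Lemma~\ref{cri}(iii), and conversely). The local stability computation at $\zeta_\lambda$ in the oscillatory case is also correct: substituting $e^{-\lambda\zeta_\lambda}=(1-2\zeta_\lambda)/(1-\zeta_\lambda)$ into $g_\lambda'(\zeta_\lambda)=-1+e^{-\lambda\zeta_\lambda}\bigl(1+\lambda(1-\zeta_\lambda)\bigr)$ gives a quantity strictly greater than $-1$ precisely because $\zeta_\lambda<1/2$, and the paper does note $\zeta_\lambda<1/2$ just before Lemma~\ref{cri}.

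The one place where your proposal is genuinely incomplete is the passage from local to global attraction when $\lambda>\alpha_{\rm tr}$. You assert that $g_\lambda$ has negative Schwarzian derivative and that $[g_\lambda(\chi_\lambda),\chi_\lambda]$ is forward invariant, but neither claim is verified, and neither is entirely routine (forward invariance of that particular interval, for instance, requires knowing the sign of $g_\lambda(\chi_\lambda)-\zeta_\lambda$, which depends on $\lambda$). Your alternative route --- ruling out $2$-cycles of $g_\lambda$ directly --- is the one actually used in \cite{cooke} and is more elementary, but you have only gestured at it. Whichever route you choose, that step needs to be written out in full; everything else in the proposal is fine.
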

Together with the results in Section~\ref{fixp} and Theorem~\ref{da}, this theorem implies that when $n$ is large, with high probability,
a supercritical Markov chain $\xn$ will be eventually trapped for a long time in a neighborhood of the global stable point $\zeta_\lambda$
of the map $g_\lambda.$ The next section is devoted to the proof of a certain qualitative form of this informal observation.
\subsection{Comparison of the stochastic and deterministic trajectories}
\label{comp}
The main results of this section are stated in Theorem~\ref{oth} (supercritical case) and Theorem~\ref{oth1} (critical and subcritical case).
\par
Theorem~\ref{da} suggests that when both $n$ and the first generation $\xn_0$ in the avalanche process are substantially large,
the trajectory of the deterministic sequence $\psi_k$ can serve
as a good approximation to the path of the Markov chain $\xn_k.$ Note however that, at least for a supercritical process,
two trajectories cannot in principle stay close each to other forever
since while the latter converges to zero with probability one, the former tends to a non-zero limit by virtue of Theorem~\ref{cooke}.
\par
The following theorem offers some insight into the duration of the time when the deterministic and the stochastic paths stay fairly close each to other,
before they become significantly separated each from another at the first time. The theorem is a suitable modification of some results in \cite{ozgur}. In words, the theorem asserts
that if Assumption~\ref{assume7} holds with $\lambda>1$ and the scaled initial population $x_{n,0}$ is close enough to the stable point
of the map $g_\lambda$ and $n$ is large, the trajectory of the avalanche model will stay close to the deterministic sequence $\psi_k$
for a time which is exponentially large in the network size $n.$ For reader's convenience we give a short detailed proof of the theorem
which generally follows the line of argument in \cite{ozgur} but is different in several details.
\begin{theorem}
\label{oth}
Suppose that Assumption~\ref{assume7} is satisfied with $\lambda>1.$ For $\delta>0,$ let
\beqn
\label{taun}
\tau_n(\delta)=\inf\bigl\{k\in\zz_+:|x_{n,k}-\psi_k|\geq \delta\bigr\},
\feqn
where the sequence $\psi_k$ is defined in \eqref{psi}.
\par
There exist an interval $(a,b)\subset (0,1)$ including
$\zeta_\lambda$ and constants $\gamma >0,$ $\delta_0\in (0,1),$ and $n_0\in\nn$ such that if
$x_{n,0}\in (a+\delta_0,b-\delta_0),$ and $n>n_0,$ then for any $m\in\nn$ and $\delta \in (0,\delta_0)$ we have
\beq
P(\tau_n(\delta)>m)\geq \bigl(1-2e^{-\gamma \delta^2 n}\bigr)^m
\geq 1-2me^{-\gamma \delta^2 n}.
\feq
Furthermore, the constants $a,b, \delta_0$ and $\gamma$ depend on the sequence of parameters $(p_n)_{n\in\nn}$ through $\lambda$ only
(this is not necessarily true for $n_0,$ which in general is not exclusively determined by the value of $\lambda$).
\end{theorem}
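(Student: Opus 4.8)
The plan is to run the standard argument that a Markov‑chain fluid limit tracks its deterministic skeleton for a time exponentially long in $n$ (the scheme of \cite{ozgur}): I would decompose $x_{n,k+1}$ into its conditional drift, which is governed by $g_\lambda$, plus a bounded martingale increment that is a normalized sum of independent indicators; exploit that $g_\lambda$ is a \emph{strict contraction} near its stable point $\zeta_\lambda$; and control the increments with Hoeffding's inequality. The single non‑routine analytic ingredient, which I expect to be the crux, is the spectral gap bound $|g_\lambda'(\zeta_\lambda)|<1$ anticipated in the remark just before the theorem.

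First I would establish the spectral gap and build the contraction neighborhood. Since $g_\lambda''(x)=-\lambda e^{-\lambda x}(2+\lambda(1-x))<0$ on $(0,1)$, the map $g_\lambda$ is strictly concave there; as $g_\lambda(0)=0$ and $g_\lambda(\zeta_\lambda)=\zeta_\lambda$, the mean of $g_\lambda'$ over $[0,\zeta_\lambda]$ equals $1$, and strict monotonicity of $g_\lambda'$ forces $g_\lambda'(\zeta_\lambda)<1$. For the lower bound, the fixed point relation gives $e^{-\lambda\zeta_\lambda}=(1-2\zeta_\lambda)/(1-\zeta_\lambda)$, hence $g_\lambda'(\zeta_\lambda)=-\zeta_\lambda/(1-\zeta_\lambda)+\lambda(1-2\zeta_\lambda)>-1$, the second summand being positive since $\zeta_\lambda<1/2$ (as noted before Lemma~\ref{cri}) and the first exceeding $-1$ for the same reason. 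Setting $L=|g_\lambda'(\zeta_\lambda)|<1$, I would fix $L'\in(L,1)$ and, by continuity of $g_\lambda'$, a radius $R>0$ with $[\zeta_\lambda-R,\zeta_\lambda+R]\subset(0,1)$ on which $|g_\lambda'|\le L'$, then take $a=\zeta_\lambda-R$, $b=\zeta_\lambda+R$, $\delta_0=\tfrac12R(1-L')$ and $\gamma=\tfrac12(1-L')^2$. Then $g_\lambda$ is $L'$‑Lipschitz on $[a,b]$ with $|g_\lambda(x)-\zeta_\lambda|\le L'|x-\zeta_\lambda|$, so whenever $\psi_0\in(a+\delta_0,b-\delta_0)$ the orbit obeys $|\psi_k-\zeta_\lambda|\le(L')^k(R-\delta_0)\le R-\delta_0$ and stays inside $[a,b]$ with a uniform margin $\delta_0$.

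Next I would set up the increments and close an induction. Conditionally on $\calf_k$ with $x_{n,k}=i/n$, the variable $X^{(n)}_{k+1}$ is a sum of $n-i$ i.i.d.\ $\mathrm{Bernoulli}(1-q_n^i)$ variables, so $E(x_{n,k+1}\mid\calf_k)=(1-i/n)(1-q_n^i)=g_{\alpha_n}(x_{n,k})$ with $\alpha_n=-n\log(1-p_n)\to\lambda$ (the quantity in \eqref{al}), and $M_{k+1}:=x_{n,k+1}-g_{\alpha_n}(x_{n,k})$ is $n^{-1}$ times a centered sum of at most $n$ summands of range $\le1$, so Hoeffding gives $P(|M_{k+1}|\ge t\mid\calf_k)\le 2e^{-2nt^2}$; also $\epsilon_n:=\sup_{[a,b]}|g_{\alpha_n}-g_\lambda|\to0$. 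With $E_k:=\{|x_{n,j}-\psi_j|<\delta\text{ for }0\le j\le k\}\in\calf_k$, on $E_k$ (using $\delta<\delta_0$) both $x_{n,k}$ and $\psi_k$ lie in $[a,b]$, hence
\[
|x_{n,k+1}-\psi_{k+1}|\le|g_{\alpha_n}(x_{n,k})-g_\lambda(x_{n,k})|+|g_\lambda(x_{n,k})-g_\lambda(\psi_k)|+|M_{k+1}|\le\epsilon_n+L'\delta+|M_{k+1}|,
\]
which is $<\delta$ once $\epsilon_n\le\tfrac12(1-L')\delta$ (true for $n$ large) and $|M_{k+1}|<\tfrac12(1-L')\delta$. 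Thus $E_{k+1}\supseteq E_k\cap\{|M_{k+1}|<\tfrac12(1-L')\delta\}$, and conditioning on $\calf_k$ yields $P(E_{k+1})\ge(1-2e^{-\gamma\delta^2 n})P(E_k)$; taking $x_{n,0}=\psi_0$ so that $P(E_0)=1$ (the case $x_{n,0}\overset{P}{\to}\psi_0$ changes the bound only by $o(1)$) and iterating gives $P(\tau_n(\delta)>m)=P(E_m)\ge(1-2e^{-\gamma\delta^2 n})^m\ge1-2me^{-\gamma\delta^2 n}$.

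The constants $a,b,\delta_0,\gamma$ are functions of $L=|g_\lambda'(\zeta_\lambda)|$ alone, hence of $\lambda$ alone, whereas the threshold $n_0$ absorbing the drift‑approximation error $\epsilon_n$ depends on the full sequence $(p_n)_{n\in\nn}$, which explains the asymmetry in the statement. The hard part is the spectral gap step; the rest is a routine drift‑plus‑noise induction, the only minor care going into the error $\epsilon_n$, which is negligible once $n$ is large relative to $\delta^{-1}$.
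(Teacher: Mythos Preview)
Your proposal is correct and follows essentially the same scheme as the paper: establish $|g_\lambda'(\zeta_\lambda)|<1$, carve out a neighborhood of $\zeta_\lambda$ on which $g_\lambda$ is a strict contraction, control the one-step stochastic fluctuation by Hoeffding's inequality, and iterate. The only noteworthy difference is in how the interval $(a,b)$ is built: the paper first chooses $a<\min\{\nu_\lambda,\zeta_\lambda,g_\lambda(b)\}<\max\{\nu_\lambda,\zeta_\lambda\}<b$ so that $g_\lambda(a,b)\subset(a+\veps,b-\veps)$ holds by unimodality, and then checks the derivative bound $|g_\lambda'|<\varrho$ on $(a,1)$ separately, whereas you take a symmetric ball around $\zeta_\lambda$ on which $|g_\lambda'|\le L'$ and read off both the invariance and the Lipschitz constant from that single estimate; your route is a bit more streamlined, while the paper's can in principle yield a larger admissible interval (it contains $\nu_\lambda$), but neither distinction affects the statement being proved.
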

\begin{proof}
Pick first $b\in (0,1)$ and then $a\in (0,1)$ in such that a manner that
\beq
0<a<\min\{\nu_\lambda,\zeta_\lambda,g_\lambda(b)\}<\max\{\nu_\lambda,\zeta_\lambda\}<b<1.
\feq
Let $I=(a,b)$ and $h=\min\{g_\lambda(a),g_\lambda(b)\}.$ Then
\beq
a<h,\qquad\nu_\lambda<b,\qquad g(I)\subset (h,\nu_\lambda).
\feq
Thus, if we set
\beq
\veps=\min\bigl\{b-\zeta_\lambda,h-a,(b-a)/2\bigr\},
\feq
we get
\beqn
\label{abe}
g(a,b)\subset (a+\veps,b-\veps)\qquad \mbox{\rm and}\qquad \zeta_\lambda\in (a+\veps,b-\veps).
\feqn
The latter assertion is true because $g_\lambda(\zeta_\lambda)=\zeta_\lambda$ and the point $\zeta_\lambda$ belongs to the interval $(a,b)$
which is mapped into $(a+\veps,b-\veps)$ by $g_\lambda.$
\par
Next, observe that for any $x\in(0,1),$
\beq
g_\lambda'(x)=-1+(1+\alpha-\alpha x)e^{-\alpha x}>-1,
\feq
and
\beq
g_\lambda''(x)<0 ~(\mbox{and hence, $g_\lambda'$ is decreasing}),\qquad g_\lambda'(1)=-1+e^{-\alpha}<0.
\feq
Furthermore,
\beq
g_\lambda'(\zeta_\lambda)<1
\feq
because $g_\lambda'(\zeta_\lambda)<0$ when $\zeta_\lambda>\nu_\lambda,$ and when $\zeta_\lambda<\nu_\lambda$ the graph of $g_\lambda$ intersects
the line $y=x$ at $\zeta_\lambda$ going upward from the left to the right, and hence the slope is less than one at the point of intersection.
\par
It follows that we can choose $a,b\in(0,1)$ in such a way that \eqref{abe} holds true for some $\veps>0,$ and
\beqn
\label{der}
\mbox{\rm There exists}~\varrho\in (0,1)~\mbox{\rm such that}~ |g_\lambda'(x)|<\varrho~\mbox{\rm on}~ (a,1).
\feqn
From now on assume that the constants $a,b\in (0,1)$ and $\veps>0$ satisfy \eqref{abe} and \eqref{der}.  Pick any $\delta\in (0,\veps),$
and assume that for some $n\in\nn$ and $k\in\zz_+,$ $\xn_k$ satisfies the following two conditions:
\beqn
\label{cnew}
\begin{array}{lll}
1.&$\mbox{}$\quad& x_{n,k}\in (a+\delta,b-\delta)
\\
2.&$\mbox{}$\quad&|x_{n,k}-\psi_k|< \delta.
\end{array}
\feqn
It follows from \eqref{ker} that for a given $\xn_k<bn,$
\beqn
\label{hoeff}
\nonumber
&&
P\Bigl(\Bigl|x_{n,k+1}-(1-x_{n,k})\Bigl(1-q_n^{\xn_k}\Bigr)\Bigr|\geq \frac{(1-\varrho)\delta}{2}\,\Bigr|\xn_k\Bigl)
\leq 2e^{-\frac{\delta^2 n(1-x_{n,k})}{2(1-\varrho)^2}}
\\
&&
\qquad
\leq 2e^{-\frac{\delta^2 n(1-b)}{2(1-\varrho)^2}},
\feqn
where in the first step we applied Hoeffding's inequality for binomial variables. Furthermore, if $n$ is large enough, then
under the condition \eqref{cnew}, we have
\beq
\Bigl|g_\lambda(x_{n,k})-\bigl(1-x_{n,k}\bigr)\Bigl(1-q_n^{\xn_k}\Bigr)\Bigr|\leq \frac{(1-\varrho)\delta}{2}
\feq
and
\beq
|g_\lambda(x_{n,k})-g_\lambda(\psi_k)|\leq \varrho \delta,
\feq
which together imply
\beq
\Bigl|g_\lambda(\psi_k)-\bigl(1-x_{n,k}\bigr)\Bigl(1-q_n^{\xn_k}\Bigr)\Bigr|\leq \frac{(1-\varrho)\delta}{2}+\varrho \delta
\leq  \frac{(1+\varrho)\delta}{2}.
\feq
Combining the last  inequality with \eqref{hoeff}, we obtain that
\beq
P\Bigl(\bigl|x_{n,k+1}-\psi_{k+1}\bigr|\geq \delta\,\Bigl|\,\xn_k \Bigl)\leq 2e^{-\frac{\delta^2 n(1-b)}{2(1-\varrho)^2}}
\feq
for any $X_{n,k}$ that satisfies condition \eqref{cnew}. Taking in account \eqref{abe}, we arrive to the following conclusion:
\begin{lemma}
\label{uni}
If $\xn_k$ satisfies condition \eqref{cnew}, then (conditionally on $\xn_k$) $\xn_{k+1}$ satisfies the same condition with probability
larger than $2e^{-\frac{\delta^2 n(1-b)}{2(1-\varrho)^2}},$ uniformly on $\xn_{k+1}.$
\end{lemma}
Let $A_{n,m,\delta}$ be the event that \eqref{cnew} is satisfied for $k=0,1,\ldots,m,$ and recall $\tau_n(\delta),$ from \eqref{taun}.
By the Markov property, the lemma implies that under the conditions of the theorem, if \eqref{cnew} is satisfied for $k=0,$ then
\beq
P\bigl(\tau_n(\delta) > m\bigr)\geq P(A_{n,m,\delta})\geq \Bigl(1-2e^{-\frac{\delta^2 n(1-b)}{2(1-\varrho)^2}}\Bigr)^m
\geq 1-2me^{-\frac{\delta^2 n(1-b)}{2(1-\varrho)^2}},
\feq
completing the proof of the theorem.
\end{proof}
The counterpart of Theorem~\ref{da} for $\lambda\leq 1$ is easier to prove since the behavior of the derivative
$g_\lambda'$ on $[0,1]$ is ``more friendly" in this case, in that $|g_'\lambda(x)|<1$ and $g_\lambda(x)<x$ for all
$x\in(0,1).$ For $\veps\in (0,1)$ let
\beqn
\label{taun5}
\varsigma_n(\veps)=\inf\bigl\{k\in\zz_+:x_{n,k}<\veps\bigr\}.
\feqn
Recall $\tau_n(\delta)$ from \eqref{taun}.
Let $\lfloor x \rfloor =\max\{k\in\zz: k\leq x\}$ denote the integer part of the real number $x.$
We have:
\begin{theorem}
\label{oth1}
Let Assumption~\ref{assume7} hold with $\lambda\leq 1,$ and suppose that $x_{n,0}=\psi_0$ for some $\psi_0\in(0,1)$ and all
$n\in\nn.$ Pick any $\delta\in (0,1)$ such that
\beq
\delta<\psi_0-\psi_1,
\feq
where $\psi_k$ are defined in \eqref{psi}.  Let $a\in(0,1)$ be the minimal root of the equation
$g_\lambda(a)=\delta,$ and define
\beqn
\label{vrho}
\varrho=
\left\{
\begin{array}{lll}
\max\{\lambda, |g_\lambda'(\psi_0)|\}&\mbox{\rm if}&\lambda<1,
\\
\max\{g'_\lambda(a), |g_\lambda'(\psi_0)|\}&\mbox{\rm if}&\lambda=1.
\end{array}
\right.
\feqn
Then the following holds true.
\item [(i)] Let $m_0=\bigl\lfloor \log \frac{\psi_0}{\delta}\bigr\rfloor +1.$ Then for any $n\in\nn,$
\beq
P(\varsigma_n(\delta)>m_0)\geq \Bigl(1-2e^{-\frac{\delta^2 n(1-\psi_0)}{2(1-\varrho)^2}}\Bigr)^{m_0}
\geq 1-2m_0\exp\Bigl\{-\frac{\delta^2 n(1-\psi_0)}{2(1-\varrho)^2}\Bigr\}.
\feq
\item [(ii)] If $\lambda\in (0,1),$ then for any $n,m\in\nn,$
\beq
P(\tau_n(\delta)>m)\geq \Bigl(1-2e^{-\frac{\delta^2 n(1-\psi_0)}{2(1-\varrho)^2}}\Bigr)^m
\geq 1-2m\exp\Bigl\{-\frac{\delta^2 n(1-\psi_0)}{2(1-\varrho)^2}\Bigr\}.
\feq
\end{theorem}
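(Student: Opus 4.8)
The plan is to follow the proof of Theorem~\ref{oth} (an adaptation of \cite{ozgur}), now exploiting that for $\lambda\le1$ the map $g_\lambda$ is a strict contraction on the part of $[0,1)$ the chain visits and that $g_\lambda(x)<x$, so that the deterministic iterates $\psi_k$ decrease monotonically to $0$. First I would record the deterministic facts about $g_\lambda$: from $g_\lambda'(x)=-1+(1+\lambda-\lambda x)e^{-\lambda x}$ one gets $g_\lambda''<0$, $g_\lambda'(0)=\lambda$ and $g_\lambda'(1)=e^{-\lambda}-1\in(-1,0)$, so $g_\lambda'$ is decreasing and $|g_\lambda'|$ is maximized over any subinterval at an endpoint. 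Hence $|g_\lambda'|\le\varrho<1$ on $(0,\psi_0]$ when $\lambda<1$, with $\varrho=\max\{\lambda,|g_\lambda'(\psi_0)|\}$, and on $[a,\psi_0]$ when $\lambda=1$, with $\varrho=\max\{g_\lambda'(a),|g_\lambda'(\psi_0)|\}$ --- the cutoff $a>0$ being unavoidable since $g_1'(0)=1$. I would also note that the only place the limit $np_n\to\lambda$ enters is through $\varepsilon_n:=|n\log q_n+\lambda|=|n\log(1-p_n)+\lambda|\to0$.

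The heart of the matter is a one-step estimate analogous to Lemma~\ref{uni}. Conditionally on $\xn_k$, the next generation $\xn_{k+1}$ is a sum of $n-\xn_k$ independent Bernoulli$(1-q_n^{\xn_k})$ variables, so $E(x_{n,k+1}\mid\xn_k)=(1-x_{n,k})(1-q_n^{\xn_k})$, which differs from $g_\lambda(x_{n,k})=(1-x_{n,k})(1-e^{-\lambda x_{n,k}})$ by at most $x_{n,k}\varepsilon_n$. Suppose $|x_{n,k}-\psi_k|<\delta$ and $x_{n,k}$ lies in the contraction region. The mean value theorem gives $|g_\lambda(x_{n,k})-g_\lambda(\psi_k)|\le\varrho|x_{n,k}-\psi_k|<\varrho\delta$, while Hoeffding's inequality, exactly as in \eqref{hoeff}, bounds the fluctuation $|x_{n,k+1}-E(x_{n,k+1}\mid\xn_k)|$ by a fixed fraction of $(1-\varrho)\delta$ off an event of probability at most $2\exp\bigl\{-\tfrac{\delta^2 n(1-\psi_0)}{2(1-\varrho)^2}\bigr\}$, using here that $x_{n,k}<\psi_0$. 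For $n$ large enough that $\varepsilon_n$ is below the same fraction of $(1-\varrho)\delta$, these combine with $\psi_{k+1}=g_\lambda(\psi_k)$ and the triangle inequality to give $|x_{n,k+1}-\psi_{k+1}|<\delta$ with the asserted conditional probability; for the remaining small values of $n$ the stated lower bound is non-positive and there is nothing to prove. One must also verify that the contraction region is preserved: since $g_\lambda$ is decreasing and $\delta<\psi_0-\psi_1$, on the good event $x_{n,k}<\psi_k+\delta\le\psi_1+\delta<\psi_0$ for $k\ge1$, so $x_{n,k}$ stays in $(0,\psi_0]$; when $\lambda=1$ one in addition propagates $x_{n,k}>a$ through the induction, which is possible exactly while $\psi_k\ge a+\delta$.

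With the one-step estimate, the Markov property finishes both parts by iteration, as in Theorem~\ref{oth}. For part (ii), $\lambda<1$: the region $(0,\psi_0]$ and the constant $\varrho<1$ are valid uniformly in $k$, so for every $m$ the event that $|x_{n,j}-\psi_j|<\delta$ holds for all $j\le m$ has probability at least $\bigl(1-2e^{-\delta^2 n(1-\psi_0)/(2(1-\varrho)^2)}\bigr)^m$, which is the claimed lower bound for $P(\tau_n(\delta)>m)$. For part (i) one iterates only up to $m_0=\lfloor\log(\psi_0/\delta)\rfloor+1$; the role of this choice is a routine deterministic estimate on the decay rate of $\psi_k$ ensuring that $\psi_k$ stays above $2\delta$ --- and, when $\lambda=1$, above $a+\delta$ --- for every $k\le m_0$, so that on the good event $x_{n,k}\ge\psi_k-\delta\ge\delta$ throughout, i.e. $\varsigma_n(\delta)>m_0$. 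For $\lambda<1$ part (i) is then essentially a consequence of part (ii); its real content is the critical case $\lambda=1$, where the iteration cannot be run indefinitely because $\psi_k\downarrow0$ only polynomially fast while $g_1'\to1$ near $0$.

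The main obstacle, and the only genuine difference between $\lambda<1$ and $\lambda=1$, is keeping the chain in a region on which $g_\lambda$ contracts. For $\lambda<1$ this is automatic since $|g_\lambda'|<1$ throughout $(0,1)$. For $\lambda=1$ the path has to be controlled simultaneously from above, to stay within $\delta$ of $\psi_k$, and from below, to stay $\ge a$ and hence away from the degenerate point $x=0$; both must be carried through the induction together, which is precisely why the horizon is truncated at $m_0$ and why $\delta$ must be small --- the hypothesis $\delta<\psi_0-\psi_1$ being what makes $\delta$, and therefore $a$ and $\varrho$, controllable. A secondary and entirely routine point is the uniformity in $k$ of the Poisson-approximation error $\varepsilon_n$, which is harmless because it is deterministic and tends to $0$.
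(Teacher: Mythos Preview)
Your approach is essentially the same as the paper's: verify that the contraction bound \eqref{der} holds with the $\varrho$ defined in \eqref{vrho}, use that $\psi_k$ is monotone decreasing (so $1-b$ in the Hoeffding estimate can be replaced by $1-\psi_0$), and then carry over verbatim the one-step estimate and Markov-property iteration from the proof of Theorem~\ref{oth}. The paper's own proof is only four sentences long and says exactly this; you have simply filled in the details it leaves implicit, in particular the reason the lower cutoff $a$ is needed when $\lambda=1$ and the role of the hypothesis $\delta<\psi_0-\psi_1$ in keeping $x_{n,k}<\psi_0$ on the good event.
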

\begin{proof}
Observe that \eqref{der} holds with $\varrho$ introduced in \eqref{vrho}.
Furthermore, $\psi_k$ is a decreasing sequence since $g_\lambda(x)<x$ for $x\in (0,1)$ when $\lambda\leq 1.$ The rest of the proof
is similar to that of Theorem~\ref{oth}, namely the induction argument based on \eqref{cnew} and Lemma~\ref{uni} carries over.
Note that we can replace $1-b$ by $1-\psi_0=1-\veps_0$ in the conclusions of the lemma because the sequence $\psi_k$ is monotone decreasing under the conditions
of the theorem.
\end{proof}


\begin{thebibliography}{100}

\bibitem{agresti74}
A.~Agresti,
\emph{Bounds on the extinction time distribution of a branching process},
Adv. in Appl. Probab. \textbf{6} (1974), 322--335.
\filbreak

\bibitem{Andersson}
H.~Andersson,
\emph{Limit theorems for a random graph epidemic model},
Ann. Appl. Probab. \textbf{8} (1998), 1331--1349.
\filbreak

\bibitem{kbam}
K.~B.~Athreya,
\emph{On the maximum sequence in a critical branching processes},
Ann. Probab. \textbf{16} (1988), 502--507.
\filbreak

\bibitem{kba}
K.~B.~Athreya,
\emph{Large deviation rates for branching processes--I. Single type case},
Ann. Appl. Probab. \textbf{4} (1994), 779--790.
\filbreak

\bibitem{ozgur}
O.~Aydogmus,
\emph{On extinction time of a generalized endemic chain-binomial model},
Math. Biosci. \textbf{279} (2016), 38--42.
\filbreak

\bibitem{infect}
N.~T.~J.~Bailey,
\emph{The Mathematical Theory of Infectious Diseases and Its Applications}
(2nd edn.), Hafner Press/Macmillan Publishing Co.,  New York, 1975.
\filbreak

\bibitem{bina}
A.~D.~Barbour,
\emph{Couplings for locally branching epidemic processes},
J. Appl. Probab. \textbf{51} (2014), 43--56.
\filbreak

\bibitem{barbour}
A.~D.~Barbour, L.~Holst,~and S.~Janson,
\emph{Poisson Approximation  (Oxford Studies in Probability, Vol. 2)}.
Oxford Science Publications. The Clarendon Press, Oxford University Press, New York, 1992.
\filbreak

\bibitem{rgraphs}
B.~Bollob\'{a}s,
\emph{Random Graphs} (2nd ed.),
Cambridge University Press, 2001.
\filbreak

\bibitem{bdet}
F.~M.~Buckley and P.~K.~Pollett,
\emph{Limit theorems for discrete-time metapopulation models},
Probab. Surv. \textbf{7} (2010), 53--83.
\filbreak

\bibitem{poisson}
S.~Chatterjee, P.~Diaconis,  and E.~Meckes,
\emph{Exchangeable pairs and Poisson approximation},
Probab. Surv. \textbf{2} (2005), 64--106.
\filbreak

\bibitem{jmb}
T.~Chumley, O.~Aydogmus, A.~Matzavinos, and A.~Roitershtein,
\emph{Moran-type bounds for the fixation probability in a frequency-dependent Wright-Fisher model},
J. Math. Biol. \textbf{76} (2018), 1--35.
\filbreak

\bibitem{review1}
L.~Cocchi, L.~L.~Gollo, A.~Zalesky, and M.~Breakspear,
\emph{Criticality in the brain: A synthesis of neurobiology, models and cognition},
Prog. Nerobiol. \textbf{158} (2017), 132--152.
\filbreak

\bibitem{cooke}
K.~L.~Cooke, D.~F.~Calef, and E.~V.~Level,
\emph{Stability or chaos in discrete epidemic models},
In: V. Lakshmikantham (Ed.), \emph{Nonlinear Systems and Applications},
Academic Press, New York, 1977, pp. 73--93.
\filbreak

\bibitem{dgbook}
R.~Durrett,
\emph{Probability Models for DNA Sequence Evolution}, 2nd ed.,
Springer Series in Probability and its Applications, Springer, New York, 2010.
\filbreak

\bibitem{dwass}
M.~Dwass,
\emph{The total progeny in a branching process and a related random walk},
J. Appl. Probab. \textbf{6} (1969), 682--686.
\filbreak

\bibitem{g1}
J.~P.~Gleeson,
\emph{Mean size of avalanches on directed random networks with arbitrary degree distributions},
Phys. Rev. E \textbf{77} (2008), 057101.
\filbreak

\bibitem{gdurrett}
J.~P.~Gleeson and R.~Durrett,
\emph{Temporal profiles of avalanches on networks},
Nat. Commun. \textbf{8} (2017), 1227.
\filbreak

\bibitem{karr}
A.~F.~Karr,
\emph{Weak convergence of a sequence of Markov chains},
Z. Wahrsch. Verw. Gebiete \textbf{33} (1975), 41--48.
\filbreak

\bibitem{kello}
C.~T.~Kello, 
\emph{Critical branching neural networks},
Psychol. Rev. \textbf{120} (2013), 230--254.
\filbreak 

\bibitem{b7}
O.~Kinouchi and M.~Copelli,
\emph{Optimal dynamical range of excitable networks at criticality},
Nat. Phys. \textbf{2} (2006), 348.
\filbreak


\bibitem{knerman}
F.~C.~Klebaner and O.~Nerman,
\emph{Autoregressive approximation in branching processes with a threshold},
Stochastic Process. Appl. \textbf{51} (1994), 1--7.
\filbreak

\bibitem{ca}
D.~B.~Larremore,  M.~Y.~Carpenter, E.~Ott, and J.~G.~Restrepo,
\emph{Statistical properties of avalanches in networks},
Phys. Rev. E \textbf{85} (2012), 066131.
\filbreak

\bibitem{b6}
D.~B.~Larremore, W.~L.~Shew, and J.~G.~Restrepo,
\emph{Predicting criticality and dynamic range in complex networks: effects of topology},
Phys. Rev. Lett. \textbf{106} (2011), 058101.
\filbreak

\bibitem{iva}
D.~B.~Larremore, W.~L.~Shew, E.~Ott, and J.~G.~Restrepo,
\emph{Effects of network topology, transmission delays, and refractoriness on the response of coupled excitable systems to a stochastic stimulus},
Chaos \textbf{21} (2011), 025117.
\filbreak


\bibitem{lind}
T.~Lindvall,
\emph{On the maximum of a branching process},
Scand. J. Statist. \textbf{3} (1976), 209--214.
\filbreak

\bibitem{longini}
I.~M.~Longini Jr.,
\emph{A chain binomial model of endemicity},
Math. Biosci. \textbf{50} (1980), 85--93.
\filbreak

\bibitem{cks}
R.~Lyons, R.~Pemantle, and Y.~Peres,
\emph{Conceptual proofs of $L \log L$ criteria for mean behavior of branching processes},
Ann. Probab. \textbf{23} (1995), 1125--1138.
\filbreak

\bibitem{n}
S.~A.~Moosavi, A.~Montakhab, and A.~Valizadeh,
\emph{Refractory period in network models of excitable nodes: self-sustaining stable dynamics, extended scaling region and oscillatory behavior},
Scientific Reports \textbf{7} (2017), 7107.
\filbreak

\bibitem{review}
M.~A.~M\={u}noz,
\emph{Colloquium: Criticality and dynamical scaling in living systems},
Rev. Mod. Phys \textbf{90} (2018), 031001.
\filbreak

\bibitem{nerd}
O.~Nerman,
\emph{On the maximal generation size of a non-critical Galton-Watson process},
Scand. J. Statist. \textbf{4} (1977), 131--135.
\filbreak

\bibitem{a}
P.~R\"{a}m\"{o}, J.~Kesseli, and O.~Yli-Harja,
\emph{Perturbation avalanches and criticality in gene regulatory networks},
J. Theoret. Biol. \textbf{242} (2006), 164--170.
\filbreak

\bibitem{a1}
P.~R\"{a}m\"{o}, S.~Kauffman, J.~Kesseli, and O.~Yli-Harja,
\emph{Measures for information propagation in Boolean networks},
Phys. D \textbf{227} (2007), 100--104.
\filbreak

\bibitem{duration}
R.~Rastegar and A.~Roitershtein,
\emph{Duration of avalanches in an excitable network},
in preparation.
\filbreak

\bibitem{reza15}
R.~Rastegar and A.~Roitershtein,
\emph{Approximation schemes for avalanches in a complex network},
work in progress.
\filbreak

\bibitem{thor}
H.~Thorisson,
\emph{Coupling, Stationarity, and Regeneration}, Springer, New-York, 2000.
\filbreak

\bibitem{chainb}
C.~H.~Wei{\ss} and P.~K.~Pollett,
\emph{Chain binomial models and binomial autoregressive processes},
Biometrics \textbf{68} (2012), 815--824.
\filbreak

\bibitem{b10}
S.~Zapperi, K.~B.~Lauritsen, and  H.~E.~Stanley,
\emph{Self-organized branching processes: mean-field theory for avalanches},
Phys. Rev. Lett. \textbf{75} (1995), 4071.
\filbreak

\end{thebibliography}
\end{document}